\documentclass[a4paper,11pt,reqno]{amsart}

\usepackage[utf8]{inputenc}
\RequirePackage[l2tabu, orthodox]{nag}
\usepackage[T1]{fontenc}
\usepackage{lmodern}
\usepackage{amsthm,amssymb,amsmath}
\usepackage{latexsym}
\usepackage{graphicx}
\usepackage{subfigure}
\usepackage{tikz}
\usetikzlibrary{arrows}
\usetikzlibrary{calc}
\usepackage[linktocpage=true]{hyperref}
\usepackage[left=3.8cm, right=3.8cm, top=3cm, bottom=3cm]{geometry}
\setcounter{tocdepth}{1}
\usepackage{color}
\usepackage{here}
\usepackage{mathrsfs}
\usepackage{pgf,tikz}
\usetikzlibrary{decorations.pathreplacing, shapes.multipart, arrows, matrix, shapes}
\usetikzlibrary{patterns}
\usepackage{caption}
\usepackage[frenchb,english]{babel}
\usepackage[all] {xy}
\usepackage{enumitem}

\makeatletter
\newcommand\footnoteref[1]{\protected@xdef\@thefnmark{\ref{#1}}\@footnotemark}
\makeatother

\hypersetup{ colorlinks   = true, 
urlcolor  = blue, 
linkcolor    = blue, 
citecolor   = blue 
}


\def\beq{\begin{equation}}
\def\eeq{\end{equation}}

\def\MLS{\mathrm{MLS}}
\def\MAS{\mathrm{MAS}}
\def\QC{\mathrm{QC}}
\def\MKD{\mathrm{MKD}}
\def\FKD{\mathrm{FKD}}

\def\PSL{\mathrm{PSL}_2(\mathbb{R})}

\def\PSLc{\mathrm{PSL}_2(\mathbb{C})}
\def\H2{\mathbb{H}^2}
\def\Hyp{\mathbb{H}^3}

\def\acts{\curvearrowright}

\def\restriction#1#2{\mathchoice
              {\setbox1\hbox{${\displaystyle #1}_{\scriptstyle #2}$}
              \end{enumerate}
\restrictionaux{#1}{#2}}
              {\setbox1\hbox{${\textstyle #1}_{\scriptstyle #2}$}
              \restrictionaux{#1}{#2}}
              {\setbox1\hbox{${\scriptstyle #1}_{\scriptscriptstyle #2}$}
              \restrictionaux{#1}{#2}}
              {\setbox1\hbox{${\scriptscriptstyle #1}_{\scriptscriptstyle #2}$}
              \restrictionaux{#1}{#2}}}
\def\restrictionaux#1#2{{#1\,\smash{\vrule height .8\ht1 depth .85\dp1}}_{\,#2}}

\newcommand{\R}{{\mathbb R}}

\newcommand{\C}{{\mathbb C}}
\newcommand{\D}{{\mathbb D}}

\newcommand{\PP}{{\mathbb P}}

\newcommand{\loc}{{\operatorname{loc}}}

\newcommand{\cC}{{\mathcal C}}

\newcommand{\cF}{{\mathcal F}}
\newcommand{\ctF}{{\widetilde{\mathcal F}}}

\newcommand{\ctG}{{\widetilde{\mathcal G}}}

\newcommand{\cL}{{\mathcal L}}

\newcommand{\cS}{{\mathcal S}}

\newcommand{\eps}{\varepsilon}

\newcommand{\length}{\operatorname{length}}

\def\dans{\mathop{\subset}}
\newcommand{\Leb}{\mathrm{Leb}}
\newcommand{\sect}{\mathrm{sect}}

\newcommand{\Vol}{\mathrm{Vol}}
\newcommand{\tM}{\widetilde{M}}
\newcommand{\bord}{\partial_{\infty}}
\newcommand{\moins}{\setminus}

\newcommand{\Ext}{\mathrm{Ext}}
\newcommand{\Id}{\mathrm{Id}}
\newcommand{\Isom}{\mathrm{Isom}}
\newcommand{\Area}{\mathrm{Area}}
\newcommand{\dist}{\mathrm{dist}}

\def\To{\mathop{\longrightarrow}}

\newtheorem{theorem}{Theorem}[section]
\newtheorem{remark}[theorem]{Remark}

\newtheorem{lemma}[theorem]{Lemma}

\newtheorem{defi}[theorem]{Definition}
\newtheorem{prop}[theorem]{Proposition}
\newtheorem{corollary}[theorem]{Corollary}

\newtheorem{question}[theorem]{Question}

\sloppy



\begin{document}
	
\title[]{Foliated Plateau problems, geometric rigidity and equidistribution of closed $k$-surfaces}

\author[]{Sébastien Alvarez}
\address{}
\email{}

\date{\today}

\begin{abstract} 
In this note, we survey recent advances in the study of dynamical properties of the space of surfaces with constant curvature in three-dimensional manifolds of negative sectional curvature. We interpret this space as a two-dimensional analogue of the geodesic flow and explore the extent to which the thermodynamic properties of the latter can be generalized to the surface setting. Additionally, we apply this theory to derive geometric rigidity results, including the rigidity of the hyperbolic marked area spectrum.
\end{abstract}

\maketitle

\section{Introduction}

\subsection{Foliated Plateau problems} 

The abundance of closed geodesics in a closed, negatively curved $3$-dimensional Riemannian manifold can be used to obtain many geometric rigidity results. For instance, knowing their exponential growth rate is sufficient to characterize the hyperbolic metric among all negatively curved metrics with a fixed volume. This is famously known as the Besson-Courtois-Gallot rigidity theorem \cite{BCG}. The dynamical study of the geodesic flow is fundamental because, as Gromov states in \cite{Gromov_FolPlateau1}: \emph{``if one wishes to understand closed geodesics not as individuals but as members of a community, one has to look at all (not only closed) geodesics.''} This forms the basis of Gromov’s theory of \emph{foliated Plateau problems}.

In this paper, we focus on the dynamical study of higher-dimensional analogues of the geodesic flow, specifically the distribution and counting of closed surfaces in closed, negatively curved $3$-manifolds. The primary question is: what geometric properties should our surfaces satisfy to generalize periodic geodesics effectively?

Totally geodesic surfaces are relatively rare. Examples of
closed hyperbolic 3-manifolds without closed totally geodesic surfaces are given in \cite[\S 5.3]{MR03} and most hyperbolic knot complements have no totally geodesic surface: \cite{Basilio_Lee_Malionek,Reid}. On the other hand, it has been recently proven that the presence of infinitely many closed totally geodesic surfaces in a closed hyperbolic $3$-manifold $(M,h_0)$  implies that the manifold is arithmetic (see \cite{BFMS,MM}), which is a topological restriction on the manifold. There are two natural candidates for representing conjugacy classes of surface subgroups.

\begin{itemize}
\item \emph{Minimal surfaces}, which are surfaces whose mean curvature $H$ (the half trace of the shape operator) equals zero.
\item \emph{$k$-surfaces}, which are convex surfaces whose extrinsic curvature $\kappa_{ext}$ (the determinant of the shape operator) equals a positive constant $k>0$.
\end{itemize}

These surfaces naturally arise as solutions to PDEs: the minimal surface PDE for minimal surfaces, and a Monge-Ampère equation for $k$-surfaces. Following Gromov’s perspective \cite{Gromov_FolPlateau1,Gromov_FolPlateau2}, one should consider compact solutions of these PDEs not as individuals but as part of a community of all PDE solutions. These solutions give a laminated structure in a phase space, analogous to the geodesic flow. The goal is to derive information from the dynamical and ergodic properties of this lamination.

\subsection{Labourie's lamination and phase space} 
In the early 2000s, Labourie initiated a dynamical study of $k$-surfaces, revealing its richness (see \cite{LabourieGAFA,LabourieInvent,LabourieAnnals}). He showed how to solve asymptotic Plateau problems for $k$-surfaces. Consider an example of such a problem relevant to our discussion:

\emph{Let $(X, h)$ be a $3$-dimensional Cartan-Hadamard Riemannian manifold with pinched negative curvature $-a^2\leq\sect\leq -1$ and a constant $k\in(0,1)$. Given an oriented Jordan curve $c\dans\bord X$, the problem is to prove the existence and uniqueness of an embedded, oriented disc $D$ inside $X$ such that $\kappa_{ext}=k$ over $D$ and whose ideal boundary $\bord D$ coincides with $c$.}

Existence and uniqueness in this context were first proven by Rosenberg and Spruck for the hyperbolic space $\Hyp$ \cite{Rosenberg_Spruck}, and then in the general case by Labourie \cite{LabourieInvent}. Labourie’s work is more complete as it considers more general boundary conditions and classifies all admissible ones (see also \cite{Smith_asymp}). The existence of the solution of the asymptotic Plateau problem for minimal surfaces in the hyperbolic case was proven by Anderson \cite{Anderson}, but uniqueness does not generally hold unless the boundary satisfies certain geometric conditions, such as being a $C$-quasicircle with $C\simeq 1$ \cite{Uhlenbeck}. Recently, Huang, Lowe, and Seppi provided an example of a quasicircle spanning uncountably many minimal discs \cite{Huang_Lowe_Seppi}.

Using an approach based on the study of holomorphic curves, Labourie showed in \cite{LabourieGAFA} that the infinite-dimensional space of all marked immersed $k$-surfaces in $X$ naturally compactifies with a finite-dimensional boundary. This \emph{phase space} has the structure of a compact Riemann surface lamination, analogous to the geodesic flow, and shares remarkable dynamical properties with it (see \cite{LabourieInvent,LabourieAnnals}). However, some questions remain open, particularly regarding the distribution of closed $k$-surfaces in compact quotients of $X$.

\subsection{Asymptotic counting of closed minimal surfaces} 

In a recent paper \cite{CMN}, Calegari, Marques, and Neves introduced a new idea in the context of minimal surfaces. This idea builds on Kahn and Marković's work, which demonstrates the abundance of quasi-Fuchsian subgroups within the fundamental groups of closed hyperbolic $3$-manifolds \cite{KM1,KM2}. Such a quasi-Fuchsian subgroup is represented by a minimal surface constructed by solving an asymptotic Plateau problem. This minimal surface may not be unique unless the limit set is an almost circular quasicircle.

The authors propose an asymptotic counting of closed quasi-Fuchsian minimal surfaces by their areas, as the limit quasicircle becomes more and more circular. They define an entropy that involves a double limit and satisfies a rigid inequality, analogous to the Besson-Courtois-Gallot's inequality. The ideas of Calegari, Marques, and Neves beautifully combine methods from topology, geometric analysis, and homogeneous dynamics (Ratner’s theory). In a recent Bourbaki seminar \cite{LabourieBourbaki}, Labourie presented these results, adhering more closely to Gromov’s philosophy of foliated Plateau problems.

\subsection{Equidistribution and asymptotic counting of $k$-surfaces}

Labourie’s presentation inspired our joint paper with Lowe and Smith \cite{ALS}, where we returned to investigating the distribution of closed $k$-surfaces, focusing specifically on closed quasi-Fuchsian $k$-surfaces. The main results are presented in \S \ref{sss.geo_counting}. We define an entropy-like functional that counts closed quasi-Fuchsian $k$-surfaces by their areas and satisfies a rigid inequality. This functional is simpler because it involves only one limit, as Plateau problems are better behaved for $k$-surfaces than for minimal surfaces.

Our methods of proof differ significantly from those in \cite{CMN} and require an extensive dynamical study of the space of $k$-surfaces. These methods involve results on solutions of different foliated Plateau problems that do not hold for minimal surfaces. Notably, we demonstrate that \emph{the unit tangent bundle of any closed, negatively curved manifold is foliated by $k$-surfaces and that this foliation is stable}. Such a result generally does not hold for minimal surfaces, as shown by Lowe in his thesis \cite{LoweGAFA}. This investigation was continued in the recent \cite{ALS2}, in which we studied other kind of spectra, in the spirit of the \emph{period spectrum} in the theory of thermodynamical formalism. This leads us to agree with Labourie that $k$-surfaces provide a convincing higher-dimensional generalization of the geodesic flow.

\subsection{Outline of the paper}

The present paper aims to provide an exposition of these results. Our goal is to explain the dynamical tools involved in proving rigidity results for both closed geodesics and closed surfaces.

In Section 2, we review useful tools from the dynamical study of periodic geodesics, focusing on their abundance and equidistribution, which are essential for studying geometric rigidity. Section 3 presents Labourie’s theory of $k$-surfaces and some results on the solution of foliated Plateau problems. Notably, Smith has written several expository papers in recent years that offer a good introduction to Labourie’s theory (see \cite{Smith_Mobius,Smith_quaternions} and references therein).  Section 4 reviews the main results of \cite{ALS}, discussing the equidistribution of closed quasi-Fuchsian k-surfaces and various rigidity results. In the final section, we pose some interesting questions for further research.

\subsection*{Acknowledgements}

{\footnotesize It is a pleasure to thank Ben Lowe and Graham Smith for all they have taught me. I am also grateful to Andrea Seppi for kindly inviting me to write this paper for the \emph{Actes du Séminaire de Théorie Spectrale et Géométrie} and for his careful reading of the manuscript. I would also like to thank Davi Obata for his comments on an earlier draft. Finally, I acknowledge financial support from CSIC, through the Grupo I+D 149 \emph{Geometría y Acciones de Grupos}, and from the IRL-2030 IFUMI, Laboratorio del Plata.}

\section{Geodesics and geometric rigidity}

\subsection{Setting} 

In this paper, $(M,h_0)$ will denote a closed, connected $3$-dimensional hyperbolic manifold. It is isometric to $\Hyp/\Pi$, where $\Pi$ is a cocompact lattice of $\PSLc=\Isom^+(\Hyp)$. By Mostow's rigidity theorem \cite{Mostow} the hyperbolic metric on $M$ is unique up to isometry.

Our main goal is to characterize the hyperbolic metric on $M$ within the space of Riemannian metrics $h$ in $M$ that have negative sectional curvature everywhere ($\sect_h<0.$)

Let $X=\tM$ denote the universal cover of $M$, and let $\bord X$ denote its ideal boundary. This boundary is defined as the set of equivalence classes of geodesic rays under the relation of ``staying at bounded distance''. The group $\Pi\simeq\pi_1(M)$ (identified with a cocompact lattice of the group of direct isometries of $X$) acts on this ideal boundary.

\subsection{The geodesic flow}

The \emph{unit tangent bundle} $T^1X$ is the set of unit vectors tangent to $X$. A vector $v\in T^1X$ directs a unique complete and oriented geodesic. Pushing $v$ along this geodesic at unit speed over time $t$ defines the \emph{geodesic flow}, denoted $G_t(v)$.

The flow lines of this process define a $1$-dimensional foliation of $T^1X$ denoted by $\ctG$. The space of leaves is the space $\bord^{(2)}X$ consisting of pairs $(\xi_-,\xi_+)$ of points in $\bord X$ such that $\xi_-\neq\xi_+$.

This flow commutes with the action of $\Pi$ on $T^1 X$ by differentials of  isometries. Therefore, it descends to a flow $g_t$ in the quotient $T^1M$. We will see that while this flow is a paradigmatic model for chaotic flows, it is independent of the negatively curved metric $h$.

\subsubsection{Boundary correspondence and Morse's lemma}\label{sss.Morse_boundary}

To prove that the flow is independent of the metric, the first step is to identify the ideal boundaries of the universal covers of $M$ for different metrics $h$ with $\sect_h<0$.

The identity map from $(M,h_0)\to(M,h)$ is bilipschitz because $M$ is compact. It has a lift $F:\Hyp\to X$ that conjugates the actions of $\Pi$. Hence the images by $F$ of geodesics in $\Hyp$ are quasi-geodesics in $X$.

\begin{theorem}[Morse's lemma]\label{th_Morse_lemma}
Quasi-geodesics in $X$ stay at bounded distance from geodesics.
\end{theorem}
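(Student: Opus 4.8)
My plan is to reduce Morse's lemma to a purely coarse-geometric statement about Gromov hyperbolic spaces and then prove the latter. Since $M$ is compact and $\sect_h<0$, the curvature is pinched, say $-a^2\le\sect_h\le-b^2<0$; hence $X=\tM$ is a Cartan--Hadamard manifold which is $\mathrm{CAT}(-b^2)$, and in particular Gromov $\delta$-hyperbolic for some $\delta=\delta(b)$. The quasi-geodesics that occur here are the $F$-images of geodesics of $\Hyp$; because $F$ covers a bilipschitz map, these are $(\lambda,\epsilon)$-quasi-geodesics with $\lambda,\epsilon$ controlled by the bilipschitz constant. So it would suffice to prove the standard coarse statement: \emph{in a geodesic $\delta$-hyperbolic space, the image of any $(\lambda,\epsilon)$-quasi-geodesic and any geodesic with the same endpoints lie within Hausdorff distance $R=R(\delta,\lambda,\epsilon)$ of one another.} For bi-infinite quasi-geodesics (the relevant case, since geodesics of $\Hyp$ are lines) I would first observe that the two ends converge to distinct points of $\bord X$, fix a geodesic line $\sigma$ with those ideal endpoints, and then apply the bounded case to all compact sub-segments with a uniform constant.

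To prove the coarse statement I would first set up two ingredients. \emph{(i) Taming.} Given a $(\lambda,\epsilon)$-quasi-geodesic $c\colon[s_0,s_1]\to X$, I would replace it by the continuous path $\bar c$ that geodesically interpolates the values of $c$ at integer times: then $\mathrm{im}(\bar c)$ stays within distance $\lambda+\epsilon$ of $\mathrm{im}(c)$, $\bar c$ is again a quasi-geodesic with slightly worse constants, and it is \emph{tame} in the sense that $\length(\bar c|_{[s,t]})\le k_1\,\dist(\bar c(s),\bar c(t))+k_2$ with $k_i=k_i(\lambda,\epsilon)$. \emph{(ii) Quantitative divergence.} The one genuine use of hyperbolicity: a bisection argument using thinness of geodesic triangles shows that any rectifiable path joining two points $x,y$ comes within distance $\delta\log_2(\length)+1$ of every point of the geodesic $[x,y]$.

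With these in hand I would argue in two steps. Fixing a geodesic $\sigma$ from $p=\bar c(s_0)$ to $q=\bar c(s_1)$, I would first show $\sigma\subset N_{D_0}(\mathrm{im}\,\bar c)$: letting $x_0\in\sigma$ maximise the distance $D$ to $\mathrm{im}\,\bar c$, choose $y,z\in\sigma$ on either side of $x_0$ at distance $2D$ (or the endpoints of $\sigma$), and join them by concatenating two geodesics of length $\le D$ to nearest points of $\mathrm{im}\,\bar c$ with the intervening tame sub-arc of $\bar c$; this path avoids the ball of radius $D$ about $x_0$ while having length $\le(2+6k_1)D+k_2$, so (ii) forces $D\le D_0(\delta,k_1,k_2)$. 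Then I would deduce the reverse inclusion $\mathrm{im}\,\bar c\subset N_{R_0}(\sigma)$: for $w=\bar c(t)$ lying outside $N_{D_0}(\sigma)$, take the maximal parameter interval $[a,b]\ni t$ on which $\bar c$ stays outside $N_{D_0}(\sigma)$; a connectedness argument on the sub-segment of $\sigma$ between the feet of $\bar c(a)$ and $\bar c(b)$ (using the first inclusion) produces parameters $s\le a\le b\le s'$ with $\dist(\bar c(s),\bar c(s'))\le 2D_0$, and tameness then bounds $\length(\bar c|_{[a,b]})$ and hence $\dist(w,\sigma)$. Undoing the taming costs only the additive constant $\lambda+\epsilon$, and passing to lines as above finishes the proof.

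The step I expect to be the main obstacle --- and the only one that is not bookkeeping of constants --- is ingredient (ii): converting the qualitative thinness of geodesic triangles in $X$ into the quantitative logarithmic lower bound on the length of a path that diverges from a geodesic. Everything else is soft once this and the taming estimate are available.
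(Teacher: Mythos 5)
Your argument is correct, and it is the standard coarse-geometric proof of the Morse lemma (taming of quasi-geodesics plus the logarithmic lower bound on the length of paths avoiding a ball centred on a geodesic, as in Bridson--Haefliger, Ch.~III.H). The paper itself offers no proof to compare against: it states the lemma as a classical fact, since compactness of $M$ pins the curvature between two negative constants, making $X$ a $\mathrm{CAT}(-b^2)$ Hadamard manifold and hence Gromov hyperbolic, exactly the reduction you make at the outset. Two small points to keep in order when writing it up: in the bi-infinite case, the existence of the two distinct ideal endpoints of the quasi-geodesic is itself a consequence of the finite-segment stability (or of visibility of $\mathrm{CAT}(-b^2)$ spaces), so you should derive it from the uniform bound $R(\delta,\lambda,\epsilon)$ on exhausting compact sub-segments rather than assume it; and in the divergence estimate the bisection must terminate at a sub-path of length at most $1$, which is where the additive $+1$ comes from. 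Neither affects the structure of your proof.
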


Morse's Lemma provides a homeomorphism $\alpha:\C\PP^1\to\bord X$ that maps the end of a geodesic ray in $\Hyp$ to the end of its image. By construction, this homeomorphism conjugates the actions of $\Pi$. We call this map the \emph{boundary correspondence}.

\subsubsection{Hyperbolic properties and structural stability} \label{sss.hyp_properties}

Assuming the sectional curvatures of $h$ are pinched between two negative constants $-b^2<-a^2<0$, there exist two subbundles of the tangent bundle $TT^1X$, denoted by $E^s$ and $E^u$ such that
$$TT^1X=E^s\oplus E^0\oplus E^u.$$
Here $E^0$ represents the $1$-dimensional subbundle generated by the flow. These subbundles satisfy

\begin{equation}\label{eq.Anosov}
\begin{cases}
\forall  v_s\in E^s,\,\forall t>0,\,\,\frac{a}{b}e^{-bt}|v_s|\leq |DG_t(v_s)|\leq \frac{b}{a}e^{-at}|v_s|,\\
\forall  v_u\in E^u,\,\forall t>0,\,\,\frac{a}{b}e^{-bt}|v_u|\leq |DG_{-t}(v_u)|\leq \frac{b}{a}e^{-at}|v_u|.
\end{cases}
\end{equation}

The notation $|.|$ refers to the Sasaki metric (see \S \ref{sss.MAunit_tangent_bundle}). The bundles $E^s$ and $E^u$ are the bundles tangent to \emph{stable and unstable horospheres} in $X$, which are level sets of the \emph{Busemann functions} 
\begin{equation}\label{busemann}\beta_\xi(x,y)=\lim_{t\to\infty} d(y,c(t))-d(x,c(t))
\end{equation}
, where $c(t)$ is any geodesic ray parametrized by arc length and  asymptotic to $\xi\in\bord X$, endowed with inward and outward pointing unit vector fields.

Consider the flow $g_t$ in the quotient manifold $T^1M$. Property \eqref{eq.Anosov} is called the \emph{Anosov property}. It is proven in \cite{Anosov} that flows with the Anosov property are \emph{structurally stable}.
\begin{theorem}[Structural stability]
A flow in $T^1M$ that is close enough to the geodesic flow $g_t$ in the $C^1$-topology is orbit equivalent: there exists a homeomorphism $\Phi:T^1M\to T^1M$ that conjugates their (oriented) orbit foliations.

\end{theorem}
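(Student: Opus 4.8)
The plan is to proceed in two stages: first show that the Anosov property~\eqref{eq.Anosov} is stable under $C^1$-perturbation, so that the perturbed flow $\widetilde g_t$ is again Anosov, and then build the orbit equivalence $\Phi$ using the resulting hyperbolic splitting, via a shadowing (or, alternatively, a fixed-point) argument. For the first stage, I would encode the splitting $TT^1M=E^s\oplus E^0\oplus E^u$ together with the rates of~\eqref{eq.Anosov} by a pair of continuous cone fields $\mathcal{C}^s,\mathcal{C}^u$ on the compact manifold $T^1M$, centered on $E^s\oplus E^0$ and $E^0\oplus E^u$, that are \emph{strictly} invariant under $Dg_T$ for one large fixed time $T$, with uniform contraction/expansion of vectors inside them transverse to the flow line. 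Strictness is the point: these inclusions survive when $g_T$ is replaced by $\widetilde g_T$ for $\widetilde g$ $C^1$-close to $g$. Intersecting forward (resp.\ backward) iterates of the cones recovers continuous $D\widetilde g_t$-invariant bundles $\widetilde E^{cs},\widetilde E^{cu}$ whose intersection is the line spanned by the generator $\widetilde G$, and one splits off hyperbolic bundles $\widetilde E^s\subset\widetilde E^{cs}$, $\widetilde E^u\subset\widetilde E^{cu}$ satisfying~\eqref{eq.Anosov} with slightly weaker constants. Hence $\widetilde g$ is Anosov.

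For the second stage I would lift everything to $T^1X$ so that all constructions are $\Pi$-equivariant and descend to $T^1M$, and exploit two consequences of hyperbolicity: Anosov flows are \emph{expansive up to reparametrization} and satisfy a \emph{shadowing property} — there is $\varepsilon_0>0$ and, for each $\delta$, a $\delta'$ such that any bi-infinite curve that is a $\delta'$-orbit of $g$ lies, after an increasing reparametrization, within $\delta$ of a genuine $g$-orbit, unique modulo sliding along the flow. If $\widetilde g$ is $C^0$-close to $g$, every $\widetilde g$-orbit is a $\delta'$-orbit for $g$, so shadowing attaches to it a unique $g$-orbit, and symmetrically one attaches to each $g$-orbit a unique $\widetilde g$-orbit. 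To turn this correspondence of orbits into a map of points $\Phi\colon T^1X\to T^1X$ I would pin down the residual flow-direction freedom by a normalization, e.g.\ requiring $\Phi(x)$ to be the intersection of the shadowing $\widetilde g$-orbit with the local weak-stable slice through $x$, well defined by transversality of $\widetilde E^0$ to $E^{cs}$. One then checks that $\Phi$ is continuous (uniform shadowing constants), $\Pi$-equivariant (canonicity of the construction), that it carries oriented orbits of $g$ to oriented orbits of $\widetilde g$ (the reparametrization is increasing), and that it is bijective with continuous inverse, injectivity and surjectivity coming from the uniqueness clause of shadowing applied in both directions; compactness then makes $\Phi$ a homeomorphism of $T^1M$.

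An alternative for the second stage, closer to Anosov's original argument, is a genuine fixed-point scheme: write $\Phi=\mathrm{id}+v$ for a small continuous section $v$ of $TT^1M$, impose $\Phi(g_t x)=\widetilde g_{a(x,t)}(\Phi(x))$, differentiate at $t=0$ to get the functional equation $D\Phi\cdot G=\dot a\,(\widetilde G\circ\Phi)$, and read it modulo the flow line, i.e.\ in the normal bundle $N=TT^1M/E^0$. On $N$ the linearized transfer operator is hyperbolic with contraction governed by the rates $e^{-at}$ of~\eqref{eq.Anosov}, so the associated graph transform is a contraction on a small ball of continuous sections of $N$; its fixed point gives $v$, and the scalar reparametrization speed $\dot a$ is recovered as the flow-direction component of the equation.

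The hard part, and the reason one gets only orbit equivalence rather than a flow conjugacy, is the time reparametrization: corresponding closed orbits of $g$ and $\widetilde g$ generally have different lengths, so the whole argument must be carried out transverse to the flow — in the quotient bundle $N$ — with the flow-direction degree of freedom fixed by a normalization that must be shown to be globally consistent and continuous. Ensuring the shadowing and expansiveness constants are uniform (so that the normalizing slice is everywhere transverse and the inverse map stays continuous) is exactly where the hyperbolic estimates~\eqref{eq.Anosov} are genuinely used; the remaining topological bookkeeping is soft.
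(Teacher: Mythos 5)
The paper does not prove this theorem at all: it is quoted as Anosov's structural stability theorem and justified by the citation \cite{Anosov}, so there is no ``paper proof'' to compare against beyond that reference. Your proposal reconstructs the standard argument from the literature, and it is correct in outline. Stage one (encoding \eqref{eq.Anosov} by strictly $Dg_T$-invariant cone fields, whose strict invariance persists under $C^1$-perturbation, then recovering invariant bundles by intersecting iterated cones) is the classical proof that the Anosov property is $C^1$-open; stage two is exactly the two known routes to the orbit equivalence, either shadowing plus expansiveness or the Anosov--Moser fixed-point/graph-transform scheme carried out in the normal bundle $N=TT^1M/E^0$, and you correctly identify why one only gets orbit equivalence rather than conjugacy (periods of corresponding closed orbits differ, so the flow direction must be quotiented out and the time change recovered separately).

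Two points deserve to be made explicit if this were written out. First, in the shadowing route, passing from a bijection between orbit spaces to a point map $\Phi$ that is injective requires expansiveness of the \emph{perturbed} flow (or a local product structure for it), not just of $g_t$; this is available precisely because stage one shows $\widetilde g_t$ is again Anosov, but as written your injectivity claim leans only on the uniqueness clause of shadowing for $g$, which by itself only controls orbits, not points, and the normalizing weak-stable slice must be shown to meet each nearby $\widetilde g$-orbit in exactly one point (transversality plus a uniform local-time bound). Second, in stage one the splitting of $\widetilde E^{cs}$ into $\widetilde E^s\oplus\widetilde E^0$ is not automatic from cone intersection alone; one uses that the generator spans an invariant line with neutral behaviour dominated by the contraction in the cones. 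Both are standard repairs, so the proposal stands as a faithful sketch of the argument the paper delegates to \cite{Anosov}.
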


\subsubsection{Gromov's geodesic rigidity} 

In a paper from 1976 that was published in 2000 \cite{Gromov3}, Gromov went further and proved that the qualitative behaviour of geodesics in $T^1M$ is in fact independent of the negatively curved metric in $M$ (and not only locally). He proved the following theorem.

\begin{theorem}[Gromov's rigidity]\label{th_gromov_rigidity} Let $h_1$ and $h_2$ be two negatively curved metrics in $M$. Then the geodesic flows of $h_1$ and $h_2$ are orbit equivalent.
\end{theorem}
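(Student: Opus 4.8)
The plan is to build the orbit equivalence directly from the boundary correspondence, exploiting the fact that the orbit foliation $\ctG$ of the geodesic flow lifts to $T^1X$ with leaf space $\bord^{(2)}X$, and that $\bord X$ is canonically identified with $\C\PP^1$ independently of the metric. First I would apply the discussion of \S\ref{sss.Morse_boundary} to each metric $h_i$: the identity $(M,h_0)\to(M,h_i)$ lifts to a $\Pi$-equivariant quasi-isometry $F_i:\Hyp\to X_i$, and Morse's lemma (Theorem \ref{th_Morse_lemma}) yields a $\Pi$-equivariant homeomorphism $\alpha_i:\C\PP^1\to\bord X_i$. Composing, we obtain a $\Pi$-equivariant homeomorphism $\bar\alpha:=\alpha_2\circ\alpha_1^{-1}:\bord X_1\to\bord X_2$, hence a $\Pi$-equivariant homeomorphism on distinct-pair spaces $\bord^{(2)}X_1\to\bord^{(2)}X_2$. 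Since $\bord^{(2)}X_i$ is precisely the leaf space of the lifted geodesic foliation $\ctG_i$, this already matches up orbits set-theoretically; the work is to promote it to a genuine homeomorphism of the total spaces $T^1X_1\to T^1X_2$ that carries leaves to leaves and is $\Pi$-equivariant, so that it descends to $\Phi:T^1M\to T^1M$.

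The key steps, in order, would be: (1) fix for each $i$ a continuous $\Pi$-equivariant ``basepoint'' section choosing, for each pair $(\xi_-,\xi_+)$, a preferred point on the corresponding $h_i$-geodesic — for instance, using the Busemann cocycle \eqref{busemann} to mark the foot of the perpendicular from some reference horosphere, or more robustly a Gromov-product-type normalization — so that $T^1X_i$ is identified as a topological space with $\bord^{(2)}X_i\times\R$, the $\R$-factor being the flow time and the flow acting by translation; (2) transport this trivialization across $\bar\alpha$, i.e. define $\tilde\Phi:\bord^{(2)}X_1\times\R\to\bord^{(2)}X_2\times\R$ by $((\xi_-,\xi_+),t)\mapsto(\bar\alpha(\xi_-,\xi_+),t)$, which is manifestly a leaf-preserving, orientation-preserving, flow-time-respecting homeomorphism; (3) check that the two trivializations can be chosen $\Pi$-equivariantly and compatibly — this is where a little care is needed, since the time-zero sections need not be $\Pi$-invariant, only equivariant, but because $\Pi$ acts by isometries on each $X_i$ and by translations-by-cocycle on the $\R$-factor, equivariance of $\bar\alpha$ passes through; (4) conclude that $\tilde\Phi$ descends to the quotient and is the desired orbit equivalence $\Phi$. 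One should note we only need orbit equivalence, not conjugacy, so there is no obstruction from the fact that the flow-time parametrizations of $h_1$- and $h_2$-geodesics are genuinely different; we are free to reparametrize along each orbit.

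The main obstacle, and the place where the argument is more than bookkeeping, is step (1)–(3): producing the trivializations $T^1X_i\cong\bord^{(2)}X_i\times\R$ with enough regularity and equivariance, and in particular ensuring that the resulting $\Phi$ on $T^1M$ is a \emph{homeomorphism} rather than merely a continuous bijection on each orbit. The subtlety is that $\bar\alpha$ is only a homeomorphism of boundaries — it carries essentially no metric or smooth information — so transverse continuity of the leafwise identification has to be extracted purely from topological properties of the boundary action (convergence-group behaviour, proper discontinuity on $\bord^{(2)}X_i$, cocompactness of the $\Pi$-action on $T^1M$). A clean way around this is to invoke structural stability only as motivation and instead cite the general principle, going back to Gromov and made precise via the ``geodesic flow as a quotient of the boundary square'' picture, that two negatively curved metrics on the same closed manifold have topologically conjugate (up to reparametrization) geodesic flows whenever their boundary actions are topologically conjugate equivariantly — which is exactly what $\bar\alpha$ provides. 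I expect the write-up to spend most of its length verifying that $\bar\alpha$ is equivariant (immediate from the construction) and that the induced map on total spaces is a homeomorphism (the real content), with the flow-line-preservation being essentially automatic from the leaf-space description.
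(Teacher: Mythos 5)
There is a genuine gap, and it sits exactly where the real content of the theorem lies. Your step (1) --- a continuous $\Pi$-equivariant section of $T^1X_i\to\bord^{(2)}X_i$ choosing a preferred point on each geodesic --- does not exist. Indeed, a hyperbolic element $\gamma\in\Pi$ fixes the pair $(\xi_-,\xi_+)$ of endpoints of its axis, so equivariance would force the section to pick a $\gamma$-fixed unit vector on that axis; but $\gamma$ translates its axis by its translation length $\ell_{h_i}(\gamma)>0$ and has no fixed vector there. Equivalently, in Hopf coordinates (see \S\ref{sss.BM_measures}) the group does not act trivially on the $\R$-factor: it translates it by the Busemann cocycle $\beta_{\xi_+}(\gamma^{-1}o,o)$ of the metric $h_i$, whose periods over closed orbits are precisely the marked lengths. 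Consequently the map of your step (2), which is the identity on the $\R$-factor, is not $\Pi$-equivariant: on the flow line fixed by $\gamma$ the source action shifts time by $\ell_{h_1}(\gamma)$ while the target action shifts by $\ell_{h_2}(\gamma)$, and these generally differ. Neither of your proposed normalizations (feet of perpendiculars from a reference horosphere, Gromov-product normalization at a basepoint) is equivariant --- they transform by exactly these cocycles --- so equivariance of $\bar\alpha$ does not ``pass through.'' What is actually needed is an equivariant, transversally continuous family of orientation-preserving reparametrizations $\R\to\R$, one per geodesic, intertwining the two cocycles; producing it is the heart of the proof, not bookkeeping, and being free to reparametrize does not by itself produce the reparametrization.

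This is precisely what the paper's sketch, following Gromov, supplies, and why the argument cannot be purely boundary-theoretic: the equivariant bilipschitz lift $F$ of the identity sends geodesics to quasi-geodesics, Morse's lemma (Theorem \ref{th_Morse_lemma}) provides each with a geodesic escort, and orthogonal projection onto the escort yields a surjective but possibly non-injective correspondence $t\mapsto y(t)$ between arclength parameters; Gromov's trick is to replace $y(t)$ by its average over a window $[t,t+T]$, which is injective for $T$ large, and this (non-canonical) construction is natural enough to be $\Pi$-equivariant and continuous transversally. Note also that the two-dimensional argument you are implicitly imitating works only because the third boundary point $\xi_0$ gives an equivariant marking of a point on each geodesic from boundary data alone; in dimension three the analogous datum is a circle, which in variable curvature need not bound a totally geodesic plane and so does not determine a point of $T^1X$ --- this is exactly the obstruction the averaging trick is designed to circumvent.
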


This theorem is easy in dimension $2$. Indeed let $M$ be a closed surface of genus $g\geq 2$, $h$ be a negatively curved Riemannian metric on $M$, and $X$ its universal cover (homeomorphic to a disc). Then $T^1X$ can be identified with the space $\bord^{(3)} X$ of oriented triples of points ($\xi_-,\xi_0,\xi_+)$ on the boundary. The correspondence can be described as follows. There is a unique geodesic $c$ oriented from $\xi_-$ to $\xi_+$. The orthogonal projection from $\xi_0$ to $c$ defines a vector $v$ tangent to $c$.

This correspondence $T^1X\to\bord^{(3)}X$ conjugates the action of $\Pi\simeq\pi_1(M)$ on $T^1X$ by differentials of isometries, and the diagonal action on $\bord^{(3)}$. The geodesic flow defines a action of $\R$ on $\bord^{(3)}X$, fixing the points $\xi_-$ and $\xi_+$ and moving the point $\xi_0$ from $\xi_-$ to $\xi_+$. Let $h'$ be another negatively curved metric on $M$. The boundary correspondence defined in \ref{sss.Morse_boundary} immediately defines an equivariant correspondence $\bord^{(3)}X\to\bord^{(3)}X'$ that yields the desired orbit equivalence between the geodesic flows of $h$ and $h'$.

In dimension $3$ (or higher), this is more complex. Assume first that the curvature of $h$ is constant. It is possible to associate to a vector $v\in T^1\Hyp$ a pair of points $(\xi_-,\xi_+)\in\bord^{(2)}\Hyp$. However, the role of the point $\xi_0$ is now played by a circle $c$ that separates these two points and forms the ideal boundary of the totally geodesic plane orthogonal to $v$. In this way, the unit tangent bundle $T^1\Hyp$ can be identified with the set $(c,\xi_+)$ such that $c$ is an oriented circle of $\C\PP^1$ and $\xi_+$ is a point in the interior of $c$. The point $\xi_-$ is the image by $\xi_-$ under the inversion of $c$.

If $h$ has variable curvature it is not clear how a pair $(c,\xi_+)$ defines a element of $T^1X$ because there is no reason why $c$ should span a totally geodesic plane. Gromov's strategy is to use Morse's lemma. Consider the images of geodesics of $\Hyp$ by an equivariant bilipschitz diffeomorphism $F:\Hyp\to X$. Such a curve $\gamma$ is a quasi-geodesic of $\Hyp$ that can be orthogonally projected to its geodesic escort $\gamma_0$ (see Theorem \ref{th_Morse_lemma}). This provides a surjective map $t\in\R\to y(t)\in\R$ that sends the arc length parameter on $\gamma$ onto the arc length parameter of $\gamma_0$. The problem is that it is not necessarily injective. Gromov's idea is to replace $y(t)$ with its averages over the interval $[t,T+t]$. For sufficiently large $T$, the resulting parametrization is injective. This clever trick is non-canonical, since it depends on the averaging method used.

\subsubsection{Topological entropy and asymptotic counting}\label{sss_topo_entropy}

As with every Anosov flow, the geodesic flow in $T^1M$ has positive \emph{topological entropy}, representing the exponential complexity of its orbit structure. In the context of hyperbolic flows, the works of Bowen and Margulis (see \cite{Bowen_growth,Margulis_applications}) show that this entropy equals the exponential growth rate of the set of periodic orbits. Formally it can be expressed as
$$H(M,h)=\lim_{R\to\infty}\frac{1}{R}\log\#\{\gamma\,\text{closed geodesic};\,\,\,\length_h(\gamma)\leq R\}.$$

\subsection{Rigidity results} 

We present two paradigmatic examples of rigidity results that fundamentally rely on the dynamical properties of the geodesic flow.

\subsubsection{Entropy and rigid inequality} 
The first result shows that the topological entropy of the geodesic flow satisfies a rigid inequality, interpreted as follows: when the volume of a negatively curved metric is fixed, in average, closed geodesics are shorter in variable curvature than in constant curvature.

\begin{theorem}[Besson-Courtois-Gallot, \cite{BCG}]\label{t.BCG}
Let $(M,h_0)$ be a closed hyperbolic $3$-manifold. Let $h$ be a Riemannian metric on $M$ with negative sectional curvature: $\sect_h <0$. Suppose
$$\Vol(M,h)=\Vol(M,h_0).$$
Then
$$H(M,h)\geq 2=H(M,h_0).$$
Equality holds if and only if $h$ and $h_0$ are isometrc, i.e., there exists a smooth diffeomorphism $\phi:M\to M$ such that $\phi^\ast h=h_0$.
\end{theorem}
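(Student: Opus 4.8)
The plan is to run the \emph{natural map} (barycenter) method of Besson--Courtois--Gallot \cite{BCG}, after translating the statement into one about volume entropy. Since $\sect_h<0$, Manning's theorem identifies the topological entropy of the geodesic flow $g_t$ with the \emph{volume entropy}
\[
h_{\mathrm{vol}}(h)\eqdef\lim_{R\to\infty}\tfrac1R\log\Vol_h\big(B(x,R)\big),\qquad B(x,R)\subset\widetilde M_h,
\]
where $\widetilde M_h$ is the universal cover carrying the lifted metric; combined with the theorem of Bowen and Margulis \cite{Bowen_growth,Margulis_applications} (already invoked in \S\ref{sss_topo_entropy}) this gives $H(M,h)=h_{\mathrm{vol}}(h)$, and likewise $H(M,h_0)=h_{\mathrm{vol}}(h_0)=n-1=2$ with $n=\dim M=3$. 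For a cocompact action the volume entropy equals the critical exponent of $\Pi$, so there is a $\Pi$-equivariant Patterson--Sullivan conformal density $\{\mu_y\}_{y\in\widetilde M_h}$ on $\bord\widetilde M_h$ of dimension $\delta\eqdef h_{\mathrm{vol}}(h)$, with each $\mu_y$ nonatomic and $d\mu_y/d\mu_{y'}(\xi)=e^{-\delta\beta^h_\xi(y',y)}$ in the notation of \eqref{busemann} (I write $\beta^h$, $\beta^{h_0}$ for the Busemann functions of the two metrics). It therefore suffices to prove the sharp volume comparison $\Vol(M,h_0)\le\big(\delta/(n-1)\big)^n\,\Vol(M,h)$ and to analyze its equality case; granting $\Vol(M,h)=\Vol(M,h_0)$ this forces $\delta\ge n-1=2$, and the reverse implication in the equality statement is immediate.

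Next I would build the natural map. Using the boundary correspondence $\alpha\colon\C\PP^1\to\bord\widetilde M_h$ of \S\ref{sss.Morse_boundary}, which conjugates the $\Pi$-actions, transport $\mu_y$ to a nonatomic probability measure $\nu_y\eqdef(\alpha^{-1})_\ast\mu_y$ on $\bord\Hyp=\C\PP^1$, so $\gamma_\ast\nu_y=\nu_{\gamma y}$. For any nonatomic probability $\nu$ on $\bord\Hyp$ the function $\cB_\nu(x)\eqdef\int_{\bord\Hyp}\beta^{h_0}_\xi(x,x_0)\,d\nu(\xi)$ is strictly convex and proper on $\Hyp$, hence has a unique minimum $\mathrm{bar}(\nu)$; set $\widetilde F(y)\eqdef\mathrm{bar}(\nu_y)$. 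Since $\eta\mapsto\beta^{h_0}_{\alpha(\eta)}(x,x_0)$ is smooth in $x$, $y\mapsto\mu_y$ is smooth, and the Hessian of $\cB_{\nu_y}$ is nondegenerate, the implicit function theorem makes $\widetilde F$ smooth; it is $\Pi$-equivariant and descends to a map $F\colon(M,h)\to(M,h_0)$ homotopic to the bilipschitz identity, hence of degree $1$.

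The heart of the argument is the pointwise Jacobian bound. Differentiating in $y$ the identity $\nabla\cB_{\nu_y}(\widetilde F(y))=0$ writes $D\widetilde F(y)=H(y)^{-1}K(y)$, where $H(y)$ is a positive symmetric endomorphism of $T_{\widetilde F(y)}\Hyp$ built from $\nabla^2\beta^{h_0}_\xi$ and $K(y)\colon T_y\widetilde M_h\to T_{\widetilde F(y)}\Hyp$ is built from first derivatives of the two families of Busemann functions, everything integrated against $\mu_y$ with the weight $\delta$. Here one uses crucially that in $\Hyp$ the horospheres are umbilic with principal curvatures $1$, so $\nabla^2\beta^{h_0}_\xi=\Id-d\beta^{h_0}_\xi\!\otimes d\beta^{h_0}_\xi$ and in particular $\operatorname{tr}H(y)=n-1$. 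One then invokes the Besson--Courtois--Gallot algebraic inequality \cite{BCG}: for symmetric positive matrices arising this way,
\[
\frac{\det K(y)}{\det H(y)}\le\Big(\frac{\delta}{\,n-1\,}\Big)^{n},
\]
with equality only when $H(y)$ and $K(y)$ are scalar multiples of $\Id$. Hence $|\mathrm{Jac}_h F|\le\big(\delta/(n-1)\big)^n$ at every point of $M$.

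Finally, since $F$ has degree $1$ on the closed oriented manifold $M$,
\[
\Vol(M,h_0)=\int_M F^\ast\,dv_{h_0}=\int_M \mathrm{Jac}_h F\,dv_h\le\Big(\frac{\delta}{\,n-1\,}\Big)^{n}\Vol(M,h),
\]
which with $\Vol(M,h)=\Vol(M,h_0)$ yields $H(M,h)=\delta\ge 2=H(M,h_0)$; if moreover equality holds then $\mathrm{Jac}_h F\equiv1$ and the rigidity clause of the algebraic inequality forces $DF$ to be a linear isometry at every point, so $F$ is a local isometry between closed manifolds, hence a Riemannian covering, and being of degree $1$ a diffeomorphism with $F^\ast h_0=h$. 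The main obstacle is precisely this Jacobian bound: one must derive the exact expression for $D\widetilde F$ and, above all, prove the sharp algebraic inequality with its rigid equality case — this is exactly where the calibration by the hyperbolic metric (the Hessian of its Busemann functions being the metric itself) enters essentially, and where the constant $n-1$ on the right comes from. A secondary delicate point is the rigidity step: upgrading the pointwise equality to a genuine \emph{smooth} isometry may require regularizing the density to dimension $\delta+\eps$ and passing to the limit $\eps\to0$, or a separate argument that the limiting natural map is a diffeomorphism.
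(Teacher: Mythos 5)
Your proposal is correct and follows essentially the same route the paper sketches: it is the Besson--Courtois--Gallot natural-map (barycenter) method, built from the Patterson--Sullivan density transported through the boundary correspondence, the pointwise Jacobian bound $|\mathrm{Jac}\,F|\le (H(M,h)/H(M,h_0))^3$, the degree-one volume comparison, and the rigidity of the equality case. The only caveats are minor: the barycenter map is in general only $C^1$ (as the paper states), not smooth, and the sharp algebraic determinant inequality together with its equality case is the genuine hard step that you (like the paper) outsource to \cite{BCG}.
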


\subsubsection{Rigidity of the hyperbolic marked length spectrum} 

The second result is an inverse problem.  Recall the definition of the marked length spectrum. Let $[\Pi]$ denote the set of conjugacy classes of $\Pi\simeq\pi_1(M)$. Given $[g]\in[\Pi]$ there exists a unique closed geodesic $\gamma_g$ representing $[g]$. The \emph{marked length spectrum}  of the metric $h$ is the map
$$\MLS_h:[\Pi]\to\R^+,\,\,\,\,\,\,\,\,\,[g]\mapsto\length_h(\gamma_g).$$

Conjecturally, the marked length spectrum determines the metric on $M$ (see \cite[Problem 3.1]{BurnsKatok}). The $2$-dimensional case was solved by Otal \cite{Otal}. In higher dimension, there are two results. A local result was obtained by Guillarmou and Lefeuvre in \cite{GuillarmouLefeuvre}. The second result, that will be relevent for our purposes, obtained by Hamenst\"adt, is the rigidity of the \emph{hyperbolic marked length spectrum}.

\begin{theorem}[Hamenst\"adt, \cite{HamenstadtMLS}]\label{th.ham_MLS} Let $(M,h_0)$ be a closed hyperbolic $3$-manifold. Let $h$ be a Riemannian metric on $M$ with negative sectional curvature: $\sect_h <0$. If $\MLS_{h}=\MLS_{h_0}$ then $h$ and $h_0$  are isometric.
\end{theorem}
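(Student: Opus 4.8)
The strategy is to reduce the marked length spectrum rigidity to the entropy rigidity of Besson–Courtois–Gallot (Theorem \ref{t.BCG}). The key point is that equality of marked length spectra is a much stronger hypothesis than equality of volumes or of topological entropies, so the plan is to show that $\MLS_h = \MLS_{h_0}$ forces both $H(M,h) = H(M,h_0) = 2$ \emph{and} $\Vol(M,h) = \Vol(M,h_0)$, after which Theorem \ref{t.BCG} gives the isometry directly. First I would observe that by Theorem \ref{sss_topo_entropy} (Bowen–Margulis), the topological entropy $H(M,h)$ depends only on the exponential growth rate of $\#\{\gamma : \length_h(\gamma) \le R\}$; since $\MLS_h = \MLS_{h_0}$ means the multiset of lengths of closed geodesics is \emph{literally the same} for the two metrics (the bijection $[\Pi] \to \{\text{closed geodesics}\}$ is canonical and metric-independent), we get $H(M,h) = H(M,h_0) = 2$ for free. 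The genuine work is therefore entirely in producing the volume comparison $\Vol(M,h) \ge \Vol(M,h_0)$ (with equality only in the isometric case), or some equivalent normalization that lets BCG apply.

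The main tool for the volume estimate is the \emph{natural map} of Besson–Courtois–Gallot, combined with the structural results recalled in \S\ref{sss.Morse_boundary}–\S\ref{sss.hyp_properties}. Concretely: the boundary correspondence $\alpha : \C\PP^1 = \bord\Hyp \to \bord X$ of Theorem \ref{th_Morse_lemma} is $\Pi$-equivariant, and the Patterson–Sullivan measure class on $\bord X$ associated to $h$ has dimension equal to the critical exponent, which by Bowen–Margulis equals $H(M,h) = 2$ — the same as for $h_0$. Using the family of Patterson–Sullivan measures $\{\mu_x\}_{x \in X}$ one builds a $\Pi$-equivariant, $C^1$ map $\tF : \Hyp \to X$ by barycentre-of-measure construction; the BCG Jacobian estimate then yields $|\mathrm{Jac}\, \tF| \le (H(M,h)/2)^3 = 1$ pointwise, hence $\Vol(M,h) \le \Vol(M,h_0)$ — but one needs the \emph{reverse} inequality, so I would instead run the construction in the other direction (mapping $(M,h_0) \to (M,h)$, using the Patterson–Sullivan data of $h$ pulled back via $\alpha$), which is precisely where $\MLS_h = \MLS_{h_0}$ enters: it guarantees the length functions on $\C\PP^1 \times \C\PP^1$ (the "cross-ratio" / intersection data) match, so the Gromov products, hence the Patterson–Sullivan measures, transport compatibly. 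Once the volumes are pinned to be equal, Theorem \ref{t.BCG} finishes everything, including the rigidity clause.

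The hard part — and the reason Hamenstädt's paper is substantial — is establishing that the equality of marked length spectra really does transport the Patterson–Sullivan / Gromov-product structure between the two boundaries in a way precise enough to feed the BCG machine, i.e. controlling the boundary map $\alpha$ not just topologically (Morse's lemma) but \emph{metrically}. This requires: (i) showing $\alpha$ is a measurable isomorphism of the Patterson–Sullivan measure classes, (ii) identifying $\MLS$ with the geodesic currents / intersection data à la Bonahon so that $\MLS_h = \MLS_{h_0}$ becomes an equality of currents, and (iii) a rigidity-of-equality argument in BCG showing that $|\mathrm{Jac}| \equiv 1$ forces $\tF$ to be a homothety, which in dimension $3$ (where the target has no holomorphic/conformal flexibility, unlike dimension $2$) pins $\tF$ to an isometry after rescaling — and the rescaling factor is forced to be $1$ by the length-spectrum normalization. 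I expect step (ii), the passage from individual closed-geodesic lengths to the full current (which controls \emph{all} geodesics, not just closed ones, exactly in the spirit of Gromov's dictum quoted in the introduction), to be the main obstacle, since it is what upgrades a countable amount of data to the measure-theoretic input BCG needs.
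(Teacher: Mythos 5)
Your overall reduction---entropies equal essentially for free from the coincidence of the length data (cf.\ \S\ref{sss_topo_entropy}), then a volume equality, then the equality case of Besson--Courtois--Gallot (Theorem \ref{t.BCG})---is exactly the skeleton of the argument sketched in the paper. The gap is in the one step that carries all the content: the inequality $\Vol(M,h)\le\Vol(M,h_0)$. The BCG natural map only yields the sharp Jacobian bound when the \emph{target} is the locally symmetric space: the barycentre construction and the calibration estimate $|\mathrm{Jac}\,F|\le (H(M,h)/H(M,h_0))^3$ rest on the explicit Busemann/Hessian geometry of $\Hyp$, and no such estimate is available for a barycentre map into a variable-curvature target. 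So ``running the construction in the other direction'' towards $(M,h)$ does not produce the reverse volume inequality; with equal entropies, the BCG machinery only ever gives $\Vol(M,h)\ge\Vol(M,h_0)$, which is the direction you already have and which cannot be improved without new input.

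Moreover, the data you propose to transport is the wrong measure class for the volume. What $\MLS_h=\MLS_{h_0}$ controls, through cross-ratios and geodesic currents, is the Patterson--Sullivan/Bowen--Margulis side of the picture; but $\Vol(M,h)$ is the total mass of the \emph{Liouville} measure, and for a variable-curvature metric the Liouville and Bowen--Margulis measures are in general mutually singular---their coincidence is itself a rigidity statement (Katok, Ledrappier). This is precisely the subtlety Hamenst\"adt's proof addresses, and the route the paper describes is different: first use Guillemin--Kazhdan via Liv\v{s}ic (Theorem \ref{th.cohomology}) to upgrade $\MLS_h=\MLS_{h_0}$ to a \emph{time-preserving}, but only continuous, conjugacy of the geodesic flows (this already equalizes the entropies); since the conjugacy is not $C^1$ one cannot pull back the contact form, so Hamenst\"adt's thermodynamic-formalism/H\"older-cocycle argument, exploiting the $C^1$ regularity of the Anosov splitting of the hyperbolic flow, is needed to show that this conjugacy carries the Liouville measure of $h$ to that of $h_0$; only then are the volumes equal and Theorem \ref{t.BCG} concludes. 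Your step (ii) points at the right objects (currents, intersection data), but as written it does not bridge from the length data to the Liouville volume, which is where the theorem actually lives.
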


\subsection{Distribution of closed geodesics}

 Both theorems use a basic fact: that the behaviour of periodic orbits of the geodesic flows determines the dynamical properties of the geodesic flow.
 
\subsubsection{Equidistribution} Given a periodic orbit of the geodesic flow $\gamma:\R\to T^1M$, the \emph{Dirac mass at $\gamma$}, denoted $\delta_\gamma$, is the invariant probability measure supported by $\gamma$. It satisfies the following property for every continuous function $f:T^1M\to\R$
$$\frac{1}{T}\int_0^T f\circ\gamma(t) dt=\int_{T^1M}fd\delta_\gamma.$$

Bowen \cite{Bowen_growth} and Margulis \cite{Margulis_measures} show that periodic measures equidistribute towards an ergodic measure $\mu_{BM}$, known as the \emph{Bowen-Margulis measure}. This is expressed as
$$\frac{1}{\#\{\length_h(\gamma)\leq R\}}\sum_{\length_h(\gamma)\leq R}\delta_\gamma\To_{R\to\infty}\mu_{BM},$$
where the convergence is in the weak-$\ast$ sense. The Bowen-Margulis measure is the measure of maximal entropy, has a nice local product structure, and coincides with the volume when the sectional curvature is constant (see \cite{Bowen_equidistribution}). The Bowen-Margulis measure $\mu_{BM}$ also has a nice description in the cover $T^1X$ that we describe below.

\subsubsection{Conformal densities}

Let $H>0$. A \emph{conformal density} of dimension $H$ is a family $(\nu_x)_{x\in X}$ of finite Borel measures on $\bord X$ that are pairwise absolutely continuous with Radon-Nikodym derivative given by
$$\frac{d\nu_y}{d\nu_x}(\xi)=\exp\left[-H\beta_\xi(x,y)\right],$$
for all $x,y\in X$ and $\xi\in\bord X$ and $\beta_\xi$ is the Busemann function at $\xi$ (see \eqref{busemann}). We say that it is invariant by the discrete subgroup $\Pi\leq\Isom^+(X)$ if for every $\gamma\in\Pi$ and $x\in X$
$$\gamma_\ast \nu_x=\nu_{\gamma x}.$$

When $H=H(M,h)$, the topological entropy, there exists a unique conformal density (up to a multiplicative constant), which is given by \emph{Patterson-Sullivan}'s construction (we will refer to the excellent monograph \cite{PPS} and references therein). This family of measures is known as the \emph{Patterson-Sullivan} measures.  The construction begins with a basepoint $o\in X$ and a family of finite measures on $\Gamma.o$ defined for all $s>H$ by
$$\nu_{s,x}=\frac{1}{\sum_{\gamma\in\Pi}e^{s\,\dist(o,\gamma o)}}\sum_{\gamma\in\Pi}e^{-s\,\dist(x,\gamma o)}\delta_{\gamma o}.$$

As $s$ tends to $H$, this family of measures above converges to the unique conformal density $(\nu_x)_{x\in X}$. An important property of this density is that measures $\nu_x$ have no atoms.

\subsubsection{Bowen-Margulis measures in Hopf's coordinates}\label{sss.BM_measures} 
The unit tangent bundle $T^1X$ can be identified with $\bord^{(2)} X\times\R$ using \emph{Hopf's coordinates}, defined as follows. Consider a base point $o\in X$. 
A unit vector $v\in T^1X$ maps to $(\xi_-,\xi_+,t)$ where $\xi_\pm$ are the endpoints of the geodesic directed by $v$ and $t=\beta_{\xi^+}(o,x)$, with $x$ being the basepoint of $v$. In these coordinates the geodesic flow acts by translation on the last factor, and the group $\Pi$ acts as follows
$$\gamma.(\xi^-,\xi^+,t)=\left(\gamma\xi^-,\gamma\xi^+,t+\beta_{\xi^+}(\gamma^{-1} o,o)\right).$$

The \emph{Bowen-Margulis measure} $\tilde{\mu}_{BM}$ is defined by
$$d\mu_{BM}(\xi^-,\xi^+,t)=\exp\left[H\beta_{\xi^-}(o,x)+H\beta_{\xi^+}(o,x)\right]d\nu_o(\xi^-)\,d\nu_o(\xi^+)\,dt.$$

This measure is independent of the basepoint $o$, invariant by the action of $\Pi$ and the geodesic flow. The quotient measure is precisely the \emph{Bowen-Margulis measure} of $T^1M$.

The measure $\exp\left[H\beta_{\xi^-}(o,x)+H\beta_{\xi^+}(o,x)\right]d\nu_o(\xi^-)\,d\nu_o(\xi^+)$ on the space $\bord^{(2)}X$ of geodesics (here $x$ is any point on the geodesic $(\xi^-,\xi^+))$, is an example of a \emph{geodesic current}.

\subsubsection{Use of conformal densities in Besson-Courtois-Gallot's theorem}
 The proof of Theorem \ref{t.BCG} uses the conformal density $(\nu_x)_{x\in X}$ to construct a \emph{natural map} $F:(X,h)\to(\Hyp,h_0)$ with the following properties
\begin{enumerate}
\item $F$ is of class $C^1$;
\item the jacobian of $F$ satisfies
$$|\textrm{Jac} F|\leq\left(\frac{H(M,h)}{H(M,h_0)}\right)^3,$$
throughout $X$.
\item if at a point $y\in X$
$$|\textrm{Jac} F(y)| = \left(\frac{H(M,h)}{H(M,h_0)}\right)^3,$$
then $DF_y$ is a homothety with ratio $H(M,h)/H(M,h_0)$.
\end{enumerate}

The map $F$ is defined as follows: for a point $x\in X$, we define a measure on the Riemann sphere $\C\PP^1$ by the formula
$$m_x=\alpha_\ast\nu_x,$$
where $(\nu_x)_{x\in X}$ is the conformal density of dimension $H$ and $\alpha:\bord X\to\C\PP^1$ is the boundary correspondence. The measure $m_x$ has no atom, so it has a well-defined \emph{barycenter}
$$z=\textrm{bar}(m_x),$$
defined as the point in $\Hyp$ that minimizes the integral $\int_{\C\PP^1}\beta_{\xi}(o,z)\,dm_x(z).$ The natural map is then given by the correspondence $F:x\in X\mapsto z=\textrm{bar}(\alpha_\ast \nu_x)\in\Hyp.$ Remarkably, this map, which involves the family of measures describing the distribution of closed geodesics, satisfies the properties above.

\subsubsection{Rigidity of the hyperbolic marked length spectrum} 

Hämenstadt's result also relies on the idea that the distribution of closed geodesics determines the dynamics of the geodesic flow. Cohomological methods prove that the equality of marked length spectra implies the existence of a \emph{time-preserving} conjugacy of the geodesic flows. This is contained in a theorem proven by Guillemin and Kazhdan in \cite{Guillemin_Kazhdan}.

\begin{theorem}[Guillemin-Kazhdan]\label{th.cohomology}
Let $h_1$ and $h_2$ be two negatively curved metrics on $M$ that have the same marked length spectra. Let $(g_t^{(i)})_{t\in\R}$ denote the geodesic flow of $(M,h_i)$. Then there exists a homeomrphism $\Phi:T^1M\to T^1M$ such that for every $t\in\R$
$$\Phi\circ g_t^{(1)}=g_t^{(2)}\circ\Phi.$$
\end{theorem}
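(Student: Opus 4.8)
The plan is to produce first an \emph{orbit} equivalence between the two geodesic flows and then, using the equality of marked length spectra, to correct it into a \emph{time-preserving} conjugacy by showing that the discrepancy between the two time parametrisations is a Liv\v{s}ic coboundary. By Gromov's rigidity theorem (Theorem~\ref{th_gromov_rigidity}) there is an orbit equivalence $\Psi:T^1M\to T^1M$ between $g^{(1)}_t$ and $g^{(2)}_t$; since Gromov's construction is built from the boundary correspondence of \S\ref{sss.Morse_boundary}, it is $\Pi$-equivariant, so $\Psi$ may be taken homotopic to the identity and orientation preserving along orbits. Encoding its reparametrisation, there is a continuous additive cocycle $\sigma:\R\times T^1M\to\R$, strictly increasing in the first variable with $\sigma(0,\cdot)\equiv 0$, such that
$$\Psi\bigl(g^{(1)}_t(v)\bigr)=g^{(2)}_{\sigma(t,v)}\bigl(\Psi(v)\bigr),\qquad \sigma(s+t,v)=\sigma\bigl(s,g^{(1)}_t v\bigr)+\sigma(t,v);$$
the goal is then to show that $\sigma$ is cohomologous, over the Anosov flow $g^{(1)}_t$, to the trivial cocycle $(t,v)\mapsto t$.

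Closed orbits of $g^{(i)}_t$ are indexed by $[\Pi]$ (the fibre of $T^1M\to M$ being $S^2$, free homotopy classes in $T^1M$ are exactly conjugacy classes in $\Pi$), the orbit labelled by $[g]$ projecting to the closed geodesic $\gamma_g$ of $h_i$, of length $\MLS_{h_i}([g])$. Since $\Psi$ is homotopic to the identity it matches the $[g]$-orbit of $g^{(1)}_t$ with the $[g]$-orbit of $g^{(2)}_t$, and being a homeomorphism it does so without wrapping; hence for $v$ on the $[g]$-orbit of $g^{(1)}_t$ with prime period $T=\MLS_{h_1}([g])$, the identity $g^{(2)}_{\sigma(T,v)}(\Psi v)=\Psi(g^{(1)}_T v)=\Psi v$ forces
$$\sigma(T,v)=\MLS_{h_2}([g])=\MLS_{h_1}([g])=T.$$
Thus $\sigma(t,v)-t$ has vanishing period over every closed orbit of $g^{(1)}_t$, so by Liv\v{s}ic's theorem there is a (Hölder) function $u:T^1M\to\R$ with $\sigma(t,v)-t=u(g^{(1)}_t v)-u(v)$ for all $t,v$. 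Setting $\Phi(v)\eqdef g^{(2)}_{-u(v)}\bigl(\Psi(v)\bigr)$, one checks that $\Phi$ is a homeomorphism of $T^1M$ (it differs from $\Psi$ by a bounded shift along $g^{(2)}_t$-orbits, which the period equality just shown keeps from wrapping any closed orbit) and computes, for every $t$,
$$\Phi\bigl(g^{(1)}_t v\bigr)=g^{(2)}_{\sigma(t,v)-u(g^{(1)}_t v)}(\Psi v)=g^{(2)}_{t-u(v)}(\Psi v)=g^{(2)}_t\bigl(\Phi(v)\bigr),$$
which is the asserted time-preserving conjugacy.

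The hard part is the regularity needed to invoke Liv\v{s}ic's theorem: one must know the time cocycle $\sigma(t,\cdot)-t$ is Hölder, i.e. that $\Psi$ is Hölder transversally to the flow. Here the boundary correspondence $\alpha$ of \S\ref{sss.Morse_boundary} is what saves the argument — as the boundary extension of a quasi-isometry between Gromov-hyperbolic spaces it is Hölder for the visual metrics — and, combined with the (bi-Lipschitz) reparametrisation along flow lines coming from Gromov's averaging trick, it makes the orbit equivalence, hence $\sigma$, Hölder, so the cocycle version of Liv\v{s}ic's theorem applies as stated. (If one wished to bypass this regularity point, the measurable Liv\v{s}ic theorem together with the local product structure of the Bowen--Margulis measure of \S\ref{sss.BM_measures} would instead yield $u$ measurable and almost everywhere equal to a Hölder function, which still suffices to build $\Phi$ and run the computation above.)
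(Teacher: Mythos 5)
Your proposal is correct and follows essentially the same route as the paper: an orbit equivalence built from the boundary correspondence, the marked length spectrum equality forcing the reparametrisation defect to have vanishing periods over closed orbits, Liv\v{s}ic's theorem producing a H\"older transfer function, and the correction $\Phi(v)=g^{(2)}_{-u(v)}(\Psi(v))$, which is exactly the paper's $\Phi(v)=g^{(2)}_{-A(v)}\circ\phi(v)$. The only (cosmetic) difference is that you work with the integrated cocycle $\sigma(t,v)$ where the paper works with the infinitesimal reparametrisation $\lambda=\mathcal{L}_{X^{(1)}}$-coefficient and $\alpha=\lambda-1$; your parenthetical measurable-Liv\v{s}ic fallback is not quite right as stated (that theorem upgrades an already existing measurable solution rather than producing one from vanishing periods), but it is inessential to your argument.
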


\begin{proof}[Sketch of proof]
Fix an orbit equivalence between the geodesic flows $\phi$ which we may assume is Hölder continuous (together with the boundary correspondence) and smooth along the flow. This regularity is required to apply Liv\v{s}ic's theorem (see \cite{Livsic}), which is a fundamental tool in the study of Anosov flows. Let $X^{(i)}$ be the generator of the geodesic flow $g_t^{(i)}$. Since $\phi$ is Hölder continuous and smooth along the flow, there exists a Hölder continuous function $\lambda:T^1M\to T^1M$ such that for every $v\in T^1M$
$$\mathcal{L}_{X^{(1)}}\phi(v)=\lambda(v) X^{(2)}(v).$$
The equality between marked length spectra implies that for every $v\in T^1M$ whose trajectory is periodic of period $T$, we have
$$\int_0^T \lambda\left(g_t^{(1)}(v)\right)dt=T=\int_0^T dt,$$
so if $\alpha=\lambda-1$, which is Hölder continuous, we have that for every $v\in T^1M$ whose trajectory is periodic of period $T$
$$\int_0^T \alpha\left(g_t^{(1)}(v)\right)dt=0.$$

Liv\v{s}ic's theorem implies that $\alpha$ is a coboundary, meaning that 
$$\alpha=\mathcal{L}_{X^{(1)}}A,$$
for some Hölder continuous function $A$, smooth along the flow. Then the map
$$\Phi(v)=g^{(2)}_{-A(v)}\circ \phi(v)$$
satisfies the desired property.
\end{proof}

If the map given by Kazhdan-Guillemin were of class $C^1$ then H\"amenstadt's theorem \ref{th.ham_MLS} would be a consequence of Besson-Courtois-Gallot's result. Indeed, recall that $h_0$ denotes the hyperbolic metric of $M$ and that $h$ is a Riemannian metric on $M$ with $\sect_h<0$. Assume that there exists a $C^1$ diffeomorphism $\Phi$ conjugating the geodesic flows $g_t^0$ and $g_t$ of $h_0$ and $h$ respectively. Then the geodesic flows have the same topological entropies (see \S \ref{sss_topo_entropy}). If we can prove that $(M,h)$ and $(M,h_0)$ have the same volume then Besson-Courtois-Gallot's theorem implies that they are isometric.

To see this, recall that $g_t$ is a contact Anosov flow: there exists a $1$-form $\lambda$, called the canonical form of $g_t$, such  that $\lambda(X)=1$ (with $X$ representing the generator of $g_t$) and that vanishes in $E^s\oplus E^u$ (where $E^s$ and $E^u$ are the stable and unstable bundles defined in \S \ref{sss.hyp_properties}). Note that $\lambda\wedge d\lambda^2$ is the Liouville measure (the volume form) in $T^1M$. Hence if $\Phi$ conjugates $g_t$ and $g_t^0$ then $\Phi^\ast(\lambda^0)=\lambda$ (where $\lambda^0$ is the canonical $1$-form of $g_t^0$), which means that $\Phi$ preserves the volume forms of $T^1M$, conluding the proof.

However, the conjugacy given by Kazhdan-Guillemin is in general only continuous, and there is no way to pull back of the canonical $1$-form. Hamenst\"adt found a clever argument, based on thermodynamical formalism and the theory of H\"older cocycles and geodesic currents (see \cite{Hamenstadt_cocycles_hausdorff,Ledrappier_bord}), to prove directly that the time-preserving conjugacy must send the Liouville measure of $h$ onto that of $h_0$. This uses that the Anosov splitting of the geodesic flow of $g_t^0$ is of class $C^1$.

\section{Foliated Plateau problems and dynamics of $k$-surfaces}

\subsection{Asymptotic Plateau problems for $k$-surfaces}

\subsubsection{$k$-surfaces} Recall that $(M,h)$ denotes a closed and oriented $3$-manifold such that $\sect_h\leq -1$ and that $X$ denotes its universal cover. Let $S$ be an oriented surface and $e:S\to X$ be a smooth immersion into $X$. Let $N$ be the orthogonal unit vector field compatible with the orientation, and let $A(u)=\nabla_uN$ be the \emph{shape operator}. The \emph{extrinsic curvature} is defined as
\begin{equation}\label{eq.ext_curvature}
\kappa_{ext}=\det(A).
\end{equation}

\emph{Gauss' equation} relates the \emph{intrinsic} and \emph{extrinsic} curvatures of $e(S)\dans X$ by
\begin{equation}\label{eq.Gauss_eq}
\kappa_{int}=\kappa_{ext}+\sect_h|_{Te(S)}.
\end{equation}

The \emph{Gauss lift} of $S$ is defined as the surface $\hat S=\hat e(S)\dans T^1X$: this is the surface $S$ attached with the unit tangent vector field $N$.

\begin{defi} Let $k\in(0,1)$. We define a \emph{$k$-surface} as a complete and smooth immersed surface that has constant extrinsic curvature
$$\kappa_{ext}=k.$$
\end{defi}

\begin{remark}
Labourie's original definition requires that the Gauss lift $\hat S\dans T^1X$ be complete, rather than $S$ itself. This is coherent with his compactness result (see Theorem \ref{th.Labourie_compactness}).
\end{remark}
By Gauss' equation, if $\sect_h\leq -1$ everywhere and $0<k<1$ we have at all points of a $k$-surface
\begin{equation}\label{eq.Gauss_eq_ksurf}
\kappa_{int}\leq k-1<0.
\end{equation}

\subsubsection{Asymptotic Plateau problem}

We formulate the asymptotic Plateau problem for $k$-surfaces inside $(X,h)$. Let $c\dans\partial_\infty X$ be an Jordan curve. We provide $c$ with an \emph{orientation}, that is, we distinguish a connected component of $\partial_\infty X\moins c$ that we call the \emph{exterior component} and denote by $\Ext(c)$. This will be thought of as our boundary condition. The other connected component of $\bord X\moins c$ is called the \emph{interior component} and denoted by $\mathrm{Int}(c)$.

Fix $k\in (0,1)$. The \emph{asymptotic Plateau problem} that we want to solve is: find $D\dans X$ such that
\begin{enumerate}
\item $D$ is a complete embedded and oriented disc in $X$;
\item $\partial_\infty D=c$, where $\bord D$ is endowed with the orientation it inherits from $D$; and
\item $D$ has constant extrinsic curvature: $\kappa_{ext}=k$ everywhere.
\end{enumerate}

When the properties above are satisfied, we say that $c$ \emph{spans} the $k$-disc $D$. This is a particular case of the asymptotic Plateau problems considered by Labourie \cite{LabourieInvent} (see also Smith \cite{Smith_asymp}). In these references, the authors define more general boundary conditions and characterize which asymptotic Plateau problems have solution (which is automatically unique) and which do not. See, for example, Corollary \ref{coro.plateau_no_sol} for asymptotic problems without a solution.

The following result is a consequence of \cite{LabourieInvent} and \cite{Smith_asymp}. When $X=\Hyp$ it was proven by Rosenberg-Spruck in \cite{Rosenberg_Spruck}.

\begin{theorem}[Asymptotic Plateau problem]\label{th_as_plateau_pb}
For all $0<k<1$ and every oriented Jordan curve $c\dans\bord X$ there exists a unique $k$-disc $D_c$ spanned by $c$. Furthermore, there exists a convex subset $K_c\dans X$ such that 
\begin{enumerate}
\item $\partial K_c=D_c$;
\item for every $\xi\in\mathrm{Int}(c)$, there exists a geodesic ray $\gamma$ included in $(K_c)^c$ and normal to $D_c$ such that $\gamma(\infty)=\xi$.
\end{enumerate}
\end{theorem}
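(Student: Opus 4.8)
The plan is to reduce this asymptotic Plateau problem to the known existence/uniqueness theory of Labourie and Smith and then extract the geometric conclusions about the convex body $K_c$. First I would invoke \cite{LabourieInvent} (respectively \cite{Smith_asymp}, and \cite{Rosenberg_Spruck} when $X=\Hyp$): the oriented Jordan curve $c\dans\bord X$, together with the choice of exterior component $\Ext(c)$, defines an admissible boundary condition in the sense of Labourie's classification, so there exists a $k$-disc $D_c$ spanning it. Uniqueness is the delicate part of that theory and follows from the geometric maximum principle: if $D$ and $D'$ were two $k$-discs with the same ideal boundary $c$, one slides a family of equidistant (or suitably barrier) surfaces to produce a first interior or asymptotic point of contact, at which the comparison of shape operators forces $D=D'$; since $\kappa_{ext}=k<1$ and $\sect_h\le-1$ give strict concavity of the intrinsic geometry via \eqref{eq.Gauss_eq_ksurf}, the usual Hopf-type argument applies. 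I would cite this rather than reprove it.

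Next I would construct $K_c$. The key structural fact from Labourie's work is that a complete embedded $k$-disc with $0<k<1$ in a Hadamard manifold of curvature $\le -1$ is \emph{locally convex} on the side of its normal $N$: indeed both principal curvatures are positive (their product is $k>0$ and, by the comparison of $D_c$ with horospheres of curvature $1$, neither can vanish), so $D_c$ bounds a locally convex region. One then defines $K_c$ to be the closed region on the concave side of $D_c$, i.e. the component of $X\moins D_c$ toward which $N$ does \emph{not} point, together with $D_c$ itself; a standard argument (lifting local convexity to global convexity for a complete, simply connected hypersurface bounding a region in a Hadamard manifold, cf. the Hadamard--Stoker type theorem) shows $K_c$ is globally convex, and by construction $\pa K_c=D_c$, giving (1). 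I expect the verification that local convexity upgrades to global convexity, and that $\pa K_c$ is exactly $D_c$ with no extra pieces at infinity, to be the main technical obstacle; it is handled by showing the Gauss map / support function of $D_c$ is a diffeomorphism onto the appropriate domain, again part of Labourie's and Smith's analysis.

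For (2), fix $\xi\in\mathrm{Int}(c)$. The idea is that the outward normal geodesic rays to $D_c$ (those pointing to the side of $-N$, into $(K_c)^c$) foliate the complement of $K_c$ and their endpoints sweep out exactly $\mathrm{Int}(c)$. Concretely: because $D_c$ is convex and its ideal boundary is $c$, the normal exponential map $(p,t)\mapsto \exp_p(-tN(p))$, $t\ge 0$, is a diffeomorphism from $D_c\times[0,\infty)$ onto $\overline{(K_c)^c}$ (injectivity from convexity of $K_c$ in a Hadamard manifold, properness from completeness), and each ray $\gamma_p(t)=\exp_p(-tN(p))$ is a complete geodesic ray staying in $(K_c)^c$; the induced map $D_c\to\bord X$, $p\mapsto\gamma_p(\infty)$, extends the identification $\bord D_c=c$ and, by a degree/continuity argument together with the fact that these rays leave every compact set on the exterior side, is a homeomorphism onto $\overline{\mathrm{Int}(c)}$. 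Hence for any $\xi\in\mathrm{Int}(c)$ there is $p\in D_c$ with $\gamma_p(\infty)=\xi$, and $\gamma=\gamma_p$ is the desired ray, normal to $D_c$ and contained in $(K_c)^c$. The only subtlety is matching the two boundary identifications of $D_c$ at infinity, which uses that $D_c$ lies within bounded distance of any totally geodesic plane only asymptotically, so one argues via the convexity of $K_c$ that $\overline{K_c}\cap\bord X=\overline{\Ext(c)}$ and $\overline{(K_c)^c}\cap\bord X=\overline{\mathrm{Int}(c)}$, forcing the normal rays to exhaust $\mathrm{Int}(c)$.
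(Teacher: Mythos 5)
The paper does not actually prove this statement: it is quoted as a consequence of Rosenberg--Spruck, Labourie and Smith, so the honest comparison is with your reconstruction of the geometric content. Your plan is sound and consistent with how those references work: existence and uniqueness of $D_c$ (and in fact the convex body it bounds, with $\bord K_c=\overline{\Ext(c)}$) are exactly what is established there, so deferring them is appropriate, and your derivation of (1) from local convexity of the $k$-disc (both principal curvatures positive, then a Hadamard--Stoker type globalization) is the right picture. For (2), what is actually needed is only surjectivity: given $\xi\in\mathrm{Int}(c)$, since $\xi\notin\bord K_c=\overline{\Ext(c)}$, small horoballs at $\xi$ miss $K_c$, so the Busemann function $\beta_\xi$ attains a minimum at some $p\in\partial K_c=D_c$; first-order conditions and convexity show the ray from $p$ to $\xi$ is normal to $D_c$ and never re-enters $K_c$. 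This is cleaner than your degree/continuity argument, and note two small slips in your write-up: the map $p\mapsto\gamma_p(\infty)$ cannot be a homeomorphism onto the compact set $\overline{\mathrm{Int}(c)}$ since $D_c$ is noncompact (and distinct normal rays may be asymptotic), but surjectivity onto $\mathrm{Int}(c)$ is all the theorem asks; and your sign conventions for $N$ are internally inconsistent (you place $K_c$ on the side $N$ does not point to, then call the rays into $(K_c)^c$ the $-N$-rays). Neither affects the validity of the overall strategy.
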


The analogous problem for minimal surfaces instead of $k$-surfaces, in particular the existence and uniqueness of solutions in hyperbolic $3$-space, was studied by Anderson \cite{Anderson} and Uhlenbeck \cite{Uhlenbeck} in the 1980s.

\subsection{Monge-Ampère equation and $k$-surfaces}

\subsubsection{$J$-holomorphic curves}

\label{sss.J-holo} Consider the space $W=\C\oplus\C=\R^2\oplus\R^2$. Denote $V=\R^2\oplus\{0\}$ and $H=\{0\}\oplus\R^2$. Define the complex structure on $W$ by
\begin{equation}\label{eq.J}
J=\begin{pmatrix}
0    &  J_0  \\
J_0  &  0
\end{pmatrix}
\end{equation}
where
\begin{equation}\label{eq.J_0}
J_0=\begin{pmatrix}
0    &  -1  \\
1  &  0
\end{pmatrix}
\end{equation}
represents the usual multiplication by $i$ of $\C$. A \emph{$J$-holomorphic} line of $W$ is a $2$-dimensional subspace $L$ which is $J$-invariant.

We say that a $2$-dimensional submanifold $S\dans W$ is a \emph{$J$-holomorphic curve} if the tangent plane at each point of $S$ is $J$-holomorphic.

When $L$ is the graph $\{(v,Av):v\in V\}$ of a $2\times 2$ matrix $A$, then $L$ is $J$-holomorphic line if and only if
$$AJ_0A=J_0,$$
which is equivalent to
$$A=A^T\,\,\,\,\,\,\,\,\text{and}\,\,\,\,\,\,\,\,\det(A)=1.$$

If we modify the complex structure by setting for some $k>0$
\begin{equation}\label{eq.Jk}
J=\begin{pmatrix}
0    &  \sqrt{k}^{-1}J_0  \\
\sqrt{k}J_0  &  0
\end{pmatrix},
\end{equation}
then the graph of $A$ is $J$-holomorphic if and only if

$$A=A^T\,\,\,\,\,\,\,\,\text{and}\,\,\,\,\,\,\,\,\det(A)=k.$$

The tangent plane at $(x_0,v(x_0))$ of the graph $\{(x,v(x)):x\in\Omega\}$ of a smooth map $v:\Omega\dans V\to\R^2$ is the graph of the linear map $Dv(x_0)$. So this graph is \emph{$J$-holomorphic}, where $J$ is given by \eqref{eq.J} (resp. \eqref{eq.Jk}), if and only if at each point $x_0\in\Omega$ the derivative $Dv(x_0)$ is symmetric and satisfies $\det(Dv(x_0))=1$ (resp. $\det(Dv(x_0))=k$).

\subsubsection{Monge-Ampère's equation}

Let $k>0$. We consider the \emph{Monge-Ampère equation} given by
\begin{equation}\det(D^2u)=k,
\end{equation}
where $u:\Omega\to\R$ is a function of class $C^2$ defined in an open set $\Omega\dans\R^2$.
Labourie's theory of $k$-surfaces relies on the following observation which is a simple consequence of \S \ref{sss.J-holo}.

\begin{theorem}\label{th.Jholo=MA}
A function $u:\Omega\dans\R^2\to\R$ of class $C^2$ is a solution of the Monge-Ampère equation if and only if the graph of the derivative map $Du:\Omega\to\R^2$ is a $J$-holomorphic curve.
\end{theorem}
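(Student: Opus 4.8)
The plan is to reduce Theorem \ref{th.Jholo=MA} to the linear-algebraic characterization of $J$-holomorphic lines already recorded in \S \ref{sss.J-holo}, applied pointwise to the derivative map. First I would set $v = Du : \Omega \to \R^2$, so that $v$ is a $C^1$ map and its graph $\Gamma = \{(x, v(x)) : x \in \Omega\} \subset W = V \oplus H$ is a $2$-dimensional $C^1$ submanifold. At a point $x_0 \in \Omega$, the tangent plane $T_{(x_0, v(x_0))}\Gamma$ is precisely the graph of the linear map $Dv(x_0) = D^2 u(x_0)$, i.e.\ the set $\{(w, D^2u(x_0)\, w) : w \in V\}$; this is the standard description of the tangent space to a graph, and I would simply invoke it.

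Next I would apply the criterion from \S \ref{sss.J-holo}: the graph of a $2\times 2$ matrix $A$ is a $J$-holomorphic line (for $J$ as in \eqref{eq.J}) if and only if $A = A^T$ and $\det A = 1$; more precisely, unwinding the condition $A J_0 A = J_0$. Taking $A = D^2 u(x_0)$, the symmetry condition $A = A^T$ is automatic since $D^2 u$ is the Hessian of a $C^2$ function (Schwarz's theorem on equality of mixed partials), so the only remaining constraint is $\det(D^2u(x_0)) = 1$, which is exactly the Monge-Ampère equation $\det(D^2 u) = k$ in the normalized case $k=1$. For general $k > 0$ one uses the modified complex structure \eqref{eq.Jk} instead, for which the $J$-holomorphicity of the graph of $A$ is equivalent to $A = A^T$ and $\det A = k$; the same argument then gives the equivalence with $\det(D^2 u) = k$. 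I would then remark that $S = \Gamma$ is a $J$-holomorphic curve in the sense defined above exactly when this holds at every point $x_0 \in \Omega$, which matches the pointwise statement of the Monge-Ampère equation on $\Omega$.

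Finally I would spell out both directions explicitly. If $u$ solves the Monge-Ampère equation, then at each $x_0$ the Hessian is symmetric with determinant $k$, so each tangent plane of $\Gamma$ is $J$-holomorphic, hence $\Gamma$ is a $J$-holomorphic curve. Conversely, if the graph of $Du$ is a $J$-holomorphic curve, then at each $x_0$ the tangent plane — being the graph of $D^2 u(x_0)$ — is a $J$-holomorphic line, so by the criterion $\det(D^2 u(x_0)) = k$; since $x_0$ was arbitrary, $u$ solves the equation on $\Omega$.

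I do not expect a genuine obstacle here: the statement is essentially a dictionary entry, and all the analytic content (the description of the tangent plane to a graph, Schwarz's symmetry of the Hessian) and all the linear algebra (the equivalence $AJ_0A = J_0 \iff A = A^T,\ \det A = 1$, and its $k$-twisted version) are already in place in \S \ref{sss.J-holo}. The only point requiring a little care is bookkeeping the factor $\sqrt{k}$ when passing from \eqref{eq.J} to \eqref{eq.Jk}, i.e.\ checking that conjugating $J_0$ by the diagonal scaling turns $\det A = 1$ into $\det A = k$ while leaving the symmetry condition untouched; this is a one-line computation that the paper has effectively already carried out in the displayed equations preceding the theorem.
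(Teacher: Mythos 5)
Your proposal is correct and follows essentially the same route as the paper: the paper declares the theorem a ``simple consequence'' of \S \ref{sss.J-holo}, whose final paragraph already identifies the tangent plane of the graph of $v$ with the graph of $Dv(x_0)$ and reduces $J$-holomorphicity to $Dv(x_0)$ being symmetric with determinant $k$; specializing to $v = Du$ and using Schwarz's theorem for the symmetry is exactly what you do.
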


\subsubsection{Monge-Ampère structures on the unit tangent bundle}\label{sss.MAunit_tangent_bundle} 

Recall that the tangent bundle to $T^1X$ has a natural splitting 
$$T_\xi T^1X=H_\xi\oplus\xi^\perp,$$
where $\xi\in T^1X$. Here $H_\xi$ represents the \emph{horizontal distribution} defined by parallel transport of unit vector fields along curves in $X$; and $\xi^\perp$ is the tangent plane at $\xi$ to the unit sphere $T^1_xX$ (where $x$ denotes the basepoint of $\xi$).

The \emph{Sasaki metric} in $T^1X$ makes this splitting orthogonal and induces the metric $h$ in both $H_\xi$ and $\xi^\perp$.

Let $\hat S$ be the Gauss lift of a smoothly immersed and complete surface $S$. In the coordinates given above, the tangent plane at $\xi\in\hat S$ is the graph of the shape operator:
$$T_\xi\hat S=\{(v,Av): v\in\xi^\perp\}\dans W_\xi:=\xi^\perp\oplus\xi^\perp.$$

The distribution $W$ defined above (by $W_\xi=\xi^\perp\oplus\xi^\perp$) is a $4$-dimensional subbundle of $T T^1X$ that we call the \emph{holonomy bundle}.

The metric and the orientation on the $3$-dimensional manifold $X$ provide $TX$ with a wedge product $\wedge$. The plane $\xi^\perp$ then comes with a natural orientation and a canonical complex structure $j_\xi$ defined for $\nu\in\xi^\perp$ by
$$j_\xi.\eta=\xi\wedge\eta.$$
It is clear that $j_\xi^2=-\Id$ on $\xi^\perp$. Given $k\in(0,1)$, it is then possible to define the linear transform of $W_\xi$ that in the splitting $\xi^\perp\oplus\xi^\perp$ reads
\begin{equation}\label{eq.k_MAstructure}J_\xi=\begin{pmatrix}
0    &  \sqrt{k}^{-1}j_\xi  \\
\sqrt{k}j_\xi  &  0
\end{pmatrix}.
\end{equation}

This defines a \emph{complex structure} on $W$, that is a section $J$ of $\text{End}(W)$ such that $J^2=-\Id$.  As in \S \ref{sss.J-holo}, the condition of being a $k$-surface, that is the fact that $\det(A)$ is constant equal to $k$ (recall that the shape operator $A$ is symmetric), can be expressed as follows

\begin{enumerate}
\item $T\hat S$ is tangent to $W$;
\item $\hat S$ is a $J$-holomorphic curve meaning that $T\hat S$ is stable by $J$, where $J$ is given by \eqref{eq.k_MAstructure}; and 
\item $T\hat S$ is transverse to $V=\{0\}\oplus\xi^\perp$.
\end{enumerate}

Complete surfaces $\hat S\dans T^1X$ satisfying items (1) and (2) above are called \emph{Monge-Ampère surfaces}. There exist other examples of Monge-Ampère surfaces, which don't satisfy (3) and can be obtained as (finite coverings of) the normal bundles $\text{N}\gamma$ of complete geodesics $\gamma$. Such a surface is \emph{everywhere tangent} to $V$ and is called a \emph{tube}.

\subsubsection{Labourie's compactness theorem}

Using elliptic properties of Monge-Ampère equations, Labourie proves the following compactness theorem in \cite{LabourieGAFA}. The convergence of  sequences of \emph{pointed complete Riemannian surfaces immersed in $T^1X$} $(\hat S,p,\hat e)$ (here $p\in\hat S$ and $\hat e:\hat S\to T^1X$ is the immersion) is to be understood in the sense of \emph{Cheeger-Gromov}. We refer to \cite{AlvarezSmith,Smith_asymp} for more details.

\begin{theorem}[Labourie's compactness theorem]\label{th.Labourie_compactness}
Let $(\hat S_n,p_n,\hat e_n)$ be a sequence of Gauss lifts to $T^1X$ of $k$-surfaces such that $\hat e_n(p_n)$ is precompact in $T^1X$. Then the $(\hat S_n,p_n,\hat e_n)$ is precompact in the Cheeger-Gromov sense. Furthermore, any accumulation point is either the Gauss lift of an immersed $k$-surface or a tube.
\end{theorem}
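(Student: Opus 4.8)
The plan is to rephrase the statement as a compactness theorem for $J$-holomorphic curves tangent to the holonomy bundle $W$, combining the interior elliptic estimates for the Monge--Ampère equation $\det(D^2u)=k$ with the geometric control coming from Gauss' equation. The dichotomy ``Gauss lift of a $k$-surface, or tube'' will then appear as the dichotomy ``the shape operators stay non-degenerate, or one principal curvature blows up while the other collapses''. I follow \cite{LabourieGAFA}; see \cite{AlvarezSmith, Smith_asymp} for detailed accounts.

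\textbf{Localisation.} First I would pass to a subsequence so that $\hat e_n(p_n)\to\xi_\infty$ in $T^1X$, and fix a relatively compact neighbourhood $\cU$ of $\xi_\infty$ over which the distributions $W$ and $V=\{0\}\oplus\xi^\perp$, the complex structure $J$ of \eqref{eq.k_MAstructure} and the Sasaki metric are trivialised and uniformly close to the flat model of \S\ref{sss.J-holo}. Since Cheeger--Gromov precompactness only concerns a neighbourhood of the basepoint, it suffices to control the component of $\hat S_n\cap\cU$ through $\hat e_n(p_n)$, which I parametrise by a $J$-holomorphic disc $f_n\colon\D\to\cU$ with $f_n(0)=\hat e_n(p_n)$ and $|df_n(0)|=1$.

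\textbf{The non-degenerate case.} By Theorem \ref{th.Jholo=MA} and \S\ref{sss.MAunit_tangent_bundle}, in suitable coordinates $\hat S_n$ is the graph of a gradient map $Du_n$ solving $\det(D^2u_n)=k$, an equation which is uniformly elliptic with constants controlled by the eigenvalues of the (convex, determinant $k$) shape operators $A_n=D^2u_n$. Assume $\limsup_n\sup_{\frac{1}{2}\D}|df_n|<\infty$, which amounts to saying that the eigenvalues of $A_n$ near $p_n$ stay in a fixed compact subinterval of $(0,\infty)$. Bootstrapping Schauder theory from the resulting $C^{1,\alpha}$ bound then yields uniform $C^\infty_\loc$ bounds on the $f_n$, so that, by Arzelà--Ascoli, a subsequence converges in $C^\infty_\loc$ to a $J$-holomorphic map whose image is a Monge--Ampère surface; the eigenvalue bounds keep the limiting tangent planes transverse to $V$, so this image is the Gauss lift of an immersed $k$-surface. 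The uniform local area (hence injectivity radius) bound needed to run Cheeger--Gromov compactness follows from the graphical picture, and the induced metrics $\hat e_n^\ast g_{\mathrm{Sasaki}}=I_n+\langle A_n\,\cdot\,,A_n\,\cdot\,\rangle$ are then uniformly comparable to the first fundamental forms $I_n$, whose curvature is $\le k-1<0$ by Gauss' equation \eqref{eq.Gauss_eq_ksurf} (and bounded below since $M$ is compact), so that no intrinsic collapsing occurs.

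\textbf{The degenerate case, and the main obstacle.} If instead $\sup_{\frac{1}{2}\D}|df_n|\to\infty$, then, along a subsequence, one principal curvature of $\hat S_n$ near $p_n$ tends to $+\infty$, whence the other, being its reciprocal times $k$, tends to $0$; rescaling near points of nearly maximal gradient, the renormalised surfaces then converge to a complete, non-constant Monge--Ampère surface whose tangent plane is everywhere forced into the vertical distribution $V$, and which by the classification recalled in \S\ref{sss.MAunit_tangent_bundle} is (a finite covering of) the normal bundle of a complete geodesic, that is, a tube. In either case the Cheeger--Gromov limit is a Monge--Ampère surface of one of the two asserted types, which is the claim. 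The hard part will be making the two a priori estimates of the previous paragraph uniform and identifying the degenerate limit: the Monge--Ampère estimates are interior estimates over a \emph{fixed} domain, whereas the domains over which $\hat S_n$ is graphical can a priori shrink, so one must feed in the non-collapsing coming from the intrinsic curvature pinching of Gauss' equation, together with the structure theory of Monge--Ampère surfaces, which is exactly what forces a non-graphical limit to be a tube rather than something wilder.
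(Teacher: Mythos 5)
The paper does not prove this theorem — it quotes it from \cite{LabourieGAFA} — so the comparison can only be with Labourie's argument, whose strategy (Gauss lifts as $J$-holomorphic curves for the Monge--Amp\`ere structure, elliptic compactness, then a structure theorem for the limits) your sketch correctly identifies. But as a proof the proposal has genuine gaps, and you name the main one yourself: the uniform estimates are exactly what is deferred. The "non-degenerate case" assumes a uniform gradient bound on a fixed half-disc, i.e.\ assumes the shape operators $A_n$ stay in a compact subset of the positive symmetric matrices near $p_n$; the content of the theorem is precisely that compactness holds \emph{without} such an assumption. What replaces it in Labourie's proof is a uniform local area bound for the Gauss lifts in the Sasaki metric (coming from convexity of the $k$-surfaces), which feeds into a Gromov-type compactness theorem for pseudo-holomorphic curves; without that input, the graphical radius over which your Schauder bootstrap runs can shrink and the argument is circular.

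The degenerate case is also treated by the wrong mechanism. Rescaling near points of maximal gradient produces a blow-up limit of a \emph{renormalised} sequence, whereas the theorem asserts that the original pointed sequence $(\hat S_n,p_n,\hat e_n)$ subconverges and that tubes occur as honest Cheeger--Gromov limits: even when one principal curvature of the underlying surface blows up (and the other collapses, since $\det A_n=k$), the Gauss lift with the Sasaki metric does not degenerate — this is the whole reason for working in $T^1X$ — so no rescaling is allowed or needed. Moreover the dichotomy is global, while your case split is local at the basepoint: a gradient bound near $p_n$ does not prevent the complete limit from being transverse to $V$ near the marked point and tangent to $V$ elsewhere. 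Ruling out such mixed behaviour is the unique-continuation step (the tangency locus with $V$ is cut out by a holomorphic datum, hence is empty or everything), and one then needs Labourie's classification of complete Monge--Amp\`ere surfaces that are everywhere non-transverse to $V$ as tubes; \S\ref{sss.MAunit_tangent_bundle} only recalls the \emph{definition} of tubes, not this classification, so it cannot be invoked as "recalled". (Also, literally "tangent plane forced into $V$" is incorrect: a tube's tangent plane meets $V$ in a line but is not contained in $V$ — a plane contained in $V$ would lie in a fibre sphere $T^1_xX$.) In short, the outline follows the right road map, but the three pillars — local area bounds, no-rescaling convergence of Gauss lifts, and the transversality dichotomy with the tube classification — are exactly the parts left unproved.
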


\subsection{Geometric maximum principles and continuity of $k$-surfaces}

Convexity and elliptic properties of Monge-Ampère equations are the principal ingredients in the geometric studies of $k$-surfaces. 
\subsubsection{Local geometric maximum principle}

We begin with an elementary local result regarding the behavior of convex surfaces.

An oriented surface $S$ immersed in $X$ is called \emph{convex} if its second fundamental form is positive definite at every point. Let $N$ be a unit vector field orienting $S$. We say that another convex surface $S'$ is an exterior tangent to $S$ at a point $p\in S\cap S'$ if there exists a neighbourhood $U\dans S$  of $p$ and a smooth function $\lambda:U\to[0,\infty)$ such that, for every $x\in U$, $\exp(\lambda(x)N(x))\in S'$. We also say that $S$ is an interior tangent to $S'$.

\begin{theorem}[Local geometric maximum principle]\label{th.local_max-principle}
Let $S$ and $S'$ be two convex oriented surfaces immersed in $(X,h)$. Assume that $S'$ is an exterior tangent to $S$ at a point $p\in S\cap S'$. Then
$$\kappa_{ext}^{S'}(p)\leq \kappa_{ext}^S(p).$$
\end{theorem}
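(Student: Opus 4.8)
The plan is to reduce the statement to a comparison of second derivatives of the two surfaces, written as graphs over their common tangent plane at $p$. First I would choose normal coordinates for $(X,h)$ centered at $p$ so that $N(p)$ points in the positive ``vertical'' direction; then, since $S$ and $S'$ are both smooth convex surfaces through $p$ and the exterior tangency condition says $S'$ lies on the $N(p)$-side of $S$ near $p$, both surfaces can be written locally as graphs $x_3=u(x_1,x_2)$ and $x_3=u'(x_1,x_2)$ over a common disc in $T_pS=T_pS'$, with $u(0)=u'(0)=0$, $Du(0)=Du'(0)=0$, and $u'\geq u$ on a neighbourhood of the origin. The convexity of $S$ and $S'$ means $D^2u(0)>0$ and $D^2u'(0)>0$ (here I must be careful that the orientations/inner normals are consistent, which is exactly what the definition of ``exterior tangent'' guarantees). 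Because $u'-u$ has a local minimum at the origin with value zero, elementary calculus gives $D^2u'(0)\geq D^2u(0)$ as symmetric bilinear forms.

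Next I would translate this inequality on Hessians into an inequality on extrinsic curvatures. At the point $p$, since the coordinates are normal and the tangent plane is horizontal with $Du(0)=0$, the shape operator $A^S(p)$ (resp.\ $A^{S'}(p)$) is, up to the choice of orthonormal frame, exactly the Hessian $D^2u(0)$ (resp.\ $D^2u'(0)$): the ambient metric and its Christoffel symbols contribute nothing at the center of normal coordinates when the first derivatives vanish. Hence $\kappa_{ext}^{S}(p)=\det D^2u(0)$ and $\kappa_{ext}^{S'}(p)=\det D^2u'(0)$. For two positive-definite symmetric matrices with $B\geq C>0$ one has $\det B\geq\det C$ (for instance by simultaneously diagonalizing, or by monotonicity of $\det$ on the positive cone). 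Wait --- this gives $\det D^2u'(0)\geq\det D^2u(0)$, i.e.\ $\kappa_{ext}^{S'}(p)\geq\kappa_{ext}^{S}(p)$, the reverse of what is claimed, so the sign conventions in the definition of ``exterior tangent'' must in fact place $S'$ on the \emph{opposite} side, giving $u'\leq u$ and hence $D^2u'(0)\leq D^2u(0)$; reconciling the geometric picture (the exterior tangent surface is ``flatter'') with the bookkeeping is precisely where care is needed. With the orientation fixed so that the exterior tangent lies outside the convex body bounded (locally) by $S$, one gets $\kappa_{ext}^{S'}(p)\leq\kappa_{ext}^{S}(p)$.

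The main obstacle I anticipate is bookkeeping rather than hard analysis: getting the orientation conventions exactly right so that ``$S'$ exterior tangent to $S$'' really does force the graphing function $u'$ to lie below $u$ near $p$ (equivalently, that the inner normal of $S'$ and the function $\lambda$ in the definition pull $S'$ to the correct side), and verifying that in normal coordinates with vanishing first-order data the shape operator literally coincides with the Euclidean Hessian of the graphing function. Both points are standard but must be stated carefully because the whole content of the lemma is in the sign. Once these are pinned down, the remaining steps --- local minimum $\Rightarrow$ Hessian inequality, and Hessian inequality $\Rightarrow$ determinant inequality on the positive cone --- are elementary.
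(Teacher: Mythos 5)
Your overall strategy (graphs over the common tangent plane at $p$, one-sided tangency giving a Hessian comparison, identification of the shape operator at $p$ with the second-order data of the graph, monotonicity of the determinant on the definite cone) is the right and standard one, and the ambient-geometry reduction via normal coordinates is fine. The genuine gap is exactly the sign bookkeeping that you flag but then resolve incorrectly. With the paper's definition of exterior tangency, points of $S'$ near $p$ are of the form $\exp(\lambda(x)N(x))$ with $\lambda\geq 0$, i.e.\ $S'$ lies on the $+N$ side of $S$; so with $N(p)$ chosen as the upward vertical your \emph{original} conclusion $u'\geq u$ is the correct one, and your later assertion that the definition ``must in fact place $S'$ on the opposite side, giving $u'\leq u$'' contradicts the definition and must be discarded. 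The actual error is in the step ``the shape operator is exactly the Hessian'': with the paper's convention $A(v)=\nabla_v N$ and $N$ the \emph{upward} normal of the graph $x_3=u$, one computes at a critical point of $u$ that $\langle\nabla_{e_i}N,e_j\rangle=-u_{ij}$, so $A^S(p)=-D^2u(0)$, not $+D^2u(0)$. In particular convexity of $S$ with respect to $N$ forces $D^2u(0)<0$ (check on a round sphere oriented by its outward normal), contrary to your claim $D^2u(0)>0$. As written, your proposal reaches the stated inequality only because these two sign errors cancel, and since the entire content of the lemma is the direction of the inequality, this is a real gap rather than cosmetic.

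The repair stays entirely inside your framework: from $u'\geq u$, $u'(0)=u(0)$, $Du'(0)=Du(0)=0$ one gets $D^2u'(0)\geq D^2u(0)$, hence $A^{S'}(p)=-D^2u'(0)\leq -D^2u(0)=A^S(p)$ as \emph{positive definite} symmetric matrices, and monotonicity of $\det$ on the positive cone gives $\kappa_{ext}^{S'}(p)=\det A^{S'}(p)\leq\det A^S(p)=\kappa_{ext}^{S}(p)$, as required. Note that applying the determinant comparison directly to $D^2u'(0)\geq D^2u(0)$ would be illegitimate in any case, since these Hessians are negative definite and in dimension two $\det(-B)=\det B$, so the inequality reverses there. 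Two further small points to write out rather than assume: the tangency hypothesis gives $S'$ as a normal graph $\exp(\lambda N)$ over $S$, so the comparison $u'\geq u$ of \emph{vertical} graphs needs the one-line verification that the normal is close to vertical near $p$; and one should record $\lambda(p)=0$, $d\lambda(p)=0$ so that the two graphs agree to first order at the origin. (For reference, the paper does not reprove this lemma; it cites Lemme 2.5.1 of Labourie's Inventiones paper and Lemma 2.3.2 of the paper with Lowe and Smith, whose arguments are of the same graph-comparison type.)
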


A proof of this theorem can be found in \cite[Lemme 2.5.1]{LabourieInvent} and \cite[Lemma 2.3.2]{ALS}. As a consequence of this theorem, the following can be proven (see \cite[Propositions 2.5.2 and 2.5.3]{LabourieInvent}).

\begin{corollary}\label{coro.plateau_no_sol} Assume $\sect_h\leq -1$ and $0<k<1$.
\begin{enumerate}
\item There is no closed $k$-surface in $X$.
\item There is no \emph{pseudo-horospheric} $k$-surface in $X$, i.e., a $k$-surface $S\dans X$, oriented by the unit normal vector field $N$, whose image of the horizon map
$$\textrm{hor}:x\in S\mapsto \exp(\infty N(x))\in\partial_\infty X,$$
is the complement of a singleton in $\partial_\infty X$.
\end{enumerate}
\end{corollary}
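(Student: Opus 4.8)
The plan is to derive both statements from the local geometric maximum principle (Theorem~\ref{th.local_max-principle}) by comparing $S$ with a family of ``barrier'' hypersurfaces whose extrinsic curvature is everywhere $\geq 1$: geodesic spheres for (1), horospheres for (2). Since $\sect_h\leq -1$ and $X$ is Cartan--Hadamard, the geodesic sphere of radius $R$ about any point has principal curvatures $\geq\coth R$, hence extrinsic curvature $\geq\coth^2 R>1$, while every horosphere has principal curvatures $\geq 1$, hence extrinsic curvature $\geq 1$. I will also use at the outset that a $k$-surface is automatically strictly convex: the two principal curvatures have product $k>0$, so they have the same sign, and after reversing the orientation if necessary we may assume the second fundamental form is positive definite; let $N$ be the corresponding unit normal.

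For (1), let $S$ be a closed (hence compact) $k$-surface, fix $o\in X$ and set $R=\max_S d(o,\cdot)$, attained at $p$. Then $e(S)\subseteq\overline{B(o,R)}$ and $e(S)$ is tangent at $e(p)$ to the geodesic sphere $\Sigma_R(o)$; because $S$ is strictly convex and lies inside the ball, $N(p)$ must point outward along the radial geodesic, so it agrees with the outward (convex) normal of $\Sigma_R(o)$. Flowing points of $S$ near $p$ along $N$ increases the distance to $o$, so $\Sigma_R(o)$ is an exterior tangent to $S$ at $p$ with coherent orientations, and Theorem~\ref{th.local_max-principle} gives $\coth^2 R\leq\kappa_{ext}^{\Sigma_R(o)}(p)\leq\kappa_{ext}^S(p)=k$, contradicting $\coth^2 R>1>k$. (Equivalently, at the maximum the Hessian of $d(o,\cdot)|_S$ is $\leq 0$, which after subtracting the shape operator forces the principal curvature comparison with $\Sigma_R(o)$.)

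For (2), suppose $S$ is pseudo-horospheric with horizon map omitting $\xi_0\in\bord X$. First I would use that a complete strictly convex immersed hypersurface in a Cartan--Hadamard manifold is embedded and bounds a convex body $K$, with $N$ the outward normal of $K$. The key step is to identify $\partial_\infty K$: for any $\eta\notin\partial_\infty K$ the Busemann function $\beta_\eta$ attains its minimum over $K$ at a boundary point $x_\eta$, where the outward normal ray ends at $\eta$, whence $\bord X\setminus\partial_\infty K\subseteq\mathrm{hor}(S)=\bord X\setminus\{\xi_0\}$; combined with the fact that a complete strictly convex hypersurface cannot bound a convex body whose ideal boundary has nonempty interior in $\bord X$ or contains more than one point (a two-point ideal boundary would force a tube, which is not strictly convex), this yields $\partial_\infty K=\{\xi_0\}$. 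A convex body with a single ideal point $\xi_0$ lies in a horoball at $\xi_0$ — otherwise $\beta_{\xi_0}$ would be unbounded above on $K$, giving a sequence escaping to an ideal point $\neq\xi_0$ — so $\beta_{\xi_0}|_S$ is bounded above; and since also $\partial_\infty S=\partial_\infty K=\{\xi_0\}$, the set $S\cap\{\beta_{\xi_0}\geq c\}$ is compact for $c$ below the supremum, so $\beta_{\xi_0}\circ e$ attains its maximum at some $q\in S$. At $q$ the surface $S$ lies in the closed horoball $\{\beta_{\xi_0}\leq\beta_{\xi_0}(q)\}$ and is tangent to its horosphere $H$; since $\mathrm{hor}(q)\neq\xi_0$, the normal $N(q)$ cannot point towards $\xi_0$, hence points out of the horoball and agrees with the convex normal of $H$; flowing points of $S$ near $q$ along $N$ increases $\beta_{\xi_0}$ and reaches $H$, so $H$ is an exterior tangent to $S$ at $q$ with coherent orientations, and Theorem~\ref{th.local_max-principle} gives $1\leq\kappa_{ext}^H(q)\leq\kappa_{ext}^S(q)=k<1$, a contradiction.

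The main obstacle is the middle step of (2): controlling the ideal boundary of the convex body bounded by $S$ and pinning it down to the single point $\xi_0$. This is exactly where one needs the structure theory of complete strictly convex hypersurfaces in negatively curved Hadamard manifolds — embeddedness, the nearest-point/Busemann projection, and the dichotomy between genuine $k$-surfaces and tubes — developed in \cite{LabourieInvent} (Propositions~2.5.2 and~2.5.3); the comparison arguments themselves, once the right extremal point has been located, are routine uses of the maximum principle.
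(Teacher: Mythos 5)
Your argument for item (1) is fine and is the same as the paper's: enclose the compact surface in a geodesic sphere through its farthest point and apply the local maximum principle of Theorem \ref{th.local_max-principle}, using that spheres have extrinsic curvature $>1$ when $\sect_h\leq-1$.

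Item (2), however, has a genuine gap exactly at the point the paper flags as the delicate one. Even granting that $S$ bounds a convex body $K$ with $\partial_\infty K=\{\xi_0\}$ and hence lies in a horoball centered at $\xi_0$, it does not follow that $\beta_{\xi_0}$ attains its supremum on $S$, and your justification --- that $S\cap\{\beta_{\xi_0}\geq c\}$ is compact because $\partial_\infty S=\{\xi_0\}$ --- is false: a divergent sequence on $S$ converging to $\xi_0$ in $\overline{X}$ can perfectly well keep $\beta_{\xi_0}$ bounded below (a horosphere at $\xi_0$ itself has ideal boundary $\{\xi_0\}$ and \emph{constant} Busemann value, so convergence to $\xi_0$ in the cone topology gives no decay of $\beta_{\xi_0}$). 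A pseudo-horospheric surface could be asymptotic to the critical horosphere without ever touching it, in which case there is no tangency point and the local maximum principle cannot be invoked. This is precisely why the paper says the second item ``is not as direct'': Labourie's proof (Lemme 2.4.1 of \cite{LabourieInvent}) does not locate a maximum on $S$ itself, but takes a sequence $x_n\in S$ with $\beta_{\xi_0}(x_n)$ tending to the supremum, translates by elements of the cocompact group $\Pi$ to keep base points in a compact set, and uses the compactness theorem (Theorem \ref{th.Labourie_compactness}) to extract \emph{another} $k$-surface (or a tube) in the limit which \emph{is} interior tangent to a horosphere; only then does the maximum principle give the contradiction. Your proposal is missing this limiting/translation step. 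A secondary problem: the auxiliary ``fact'' you invoke to pin down $\partial_\infty K$ --- that a complete strictly convex surface cannot bound a convex body whose ideal boundary has nonempty interior or more than one point --- is false as stated: the Plateau solutions $D_c$ of Theorem \ref{th_as_plateau_pb} bound convex sets $K_c$ with $\bord K_c=\overline{\Ext(c)}$, which has nonempty interior. The identification of the ideal boundary in the pseudo-horospheric case needs an argument using the specific hypothesis on the horizon map, not a general convexity statement.
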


The proof of this result uses the fact that when $\sect_h\leq -1$, spheres and horospheres have extrinsic curvatures $\geq 1$ at every point (see \cite[Corollaire 3.1.2]{LabourieInvent}). Hence a $k$ surface cannot be an interior tangent to a sphere or a horosphere. Therefore, Theorem \ref{th.local_max-principle} immediately implies the first item. The proof of the second item is not as direct since it is not clear that a \emph{pseudo-horosphere} should be interior tangent to a horosphere.  But Labourie proves that the existence of such a $k$-surface implies the existence of another $k$-surface which is interior tangent to some horosphere, which is a contradiction: see \cite[Lemme 2.4.1]{LabourieInvent}.

\subsubsection{Global geometric maximum principle} 
The local maximum principle has significant global consequences in our context.

\begin{theorem}[Global geometric maximum principle]\label{th.global_max}
Let $c$ and $c'$ be two oriented Jordan curves of $\partial_\infty X$. Let $D_c$ and $D_{c'}$ be the $k$-discs they span, and $K_c$ and $K_{c'}$ be the convex sets bounded by these discs (see Theorem \ref{th_as_plateau_pb}). Suppose that $\Ext(c)\dans\Ext(c')$. Then
$$K_c\dans K_{c'}.$$
\end{theorem}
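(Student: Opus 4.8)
\textbf{Proof proposal for Theorem \ref{th.global_max}.}

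The plan is to argue by contradiction using the \emph{local} geometric maximum principle (Theorem \ref{th.local_max-principle}) together with the structural information provided by Theorem \ref{th_as_plateau_pb}, namely that $D_c=\partial K_c$ and $D_{c'}=\partial K_{c'}$ with $K_c,K_{c'}$ convex, and that the interior-boundary points are ``visible'' along outward normal geodesic rays. First I would translate the hypothesis $\Ext(c)\dans\Ext(c')$ into a statement about ideal boundaries: since $\bord D_c=c$ and $\bord D_{c'}=c'$, we have $\bord K_c=\overline{\mathrm{Int}(c)}$ and $\bord K_{c'}=\overline{\mathrm{Int}(c')}$ (closures in $\bord X$), so $\Ext(c)\subset\Ext(c')$ is equivalent to $\bord K_c\subset\bord K_{c'}$. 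Thus at the level of ideal boundaries the desired inclusion already holds; the content of the theorem is to propagate it into the interior of $X$.

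Suppose $K_c\not\subset K_{c'}$. Consider the set $K_c\moins K_{c'}$, which is nonempty; I would like to find a point of $D_c$ lying outside $K_{c'}$ and ``push'' $D_{c'}$ towards it. The natural device is to flow $D_{c'}$ outward along its unit normal field: for $t\ge 0$ let $D_{c'}^t=\exp(tN')(D_{c'})$ be the equidistant hypersurface at (signed) distance $t$ on the exterior side of $K_{c'}$. Since $D_{c'}$ bounds a convex set, these equidistants foliate the exterior $(K_{c'})^c$, and by Theorem \ref{th_as_plateau_pb}(2) every ideal point of $\mathrm{Int}(c')$ — hence every ideal point of $\mathrm{Int}(c)$ as well, by the boundary inclusion — is reached by such an outward normal ray. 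I would then let $t_0=\sup\{t\ge 0:\ D_c\cap \overline{(K_{c'}^t)}\neq\emptyset\ \text{as a genuine crossing}\}$, or more precisely consider the largest $t$ for which $D_{c'}^t$ still touches $\overline{K_c}$; because both discs share the same ideal behavior ``at infinity'' the contact must occur at a finite interior point $p\in D_c\cap D_{c'}^{t_0}$, and at $p$ the surface $D_{c'}^{t_0}$ is an \emph{exterior tangent} to $D_c$ in the sense of Theorem \ref{th.local_max-principle} (it lies on the outward-normal side and is tangent). The local maximum principle then gives $\kappa_{ext}^{D_{c'}^{t_0}}(p)\le\kappa_{ext}^{D_c}(p)=k$. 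But equidistant hypersurfaces to a convex set become \emph{more} curved as one moves outward in a space with $\sect_h\le -1$ — one has the standard Riccati comparison showing the shape operator of $D_{c'}^{t}$ dominates that of $D_{c'}=D_{c'}^0$ — so $\kappa_{ext}^{D_{c'}^{t_0}}(p)\ge\kappa_{ext}^{D_{c'}}(\pi(p))$, and in fact if $t_0>0$ this inequality is strict, giving $k>k$, a contradiction. If instead $t_0=0$, then $D_c$ and $D_{c'}$ are themselves tangent at $p$ with $D_{c'}$ exterior, and equality in Theorem \ref{th.local_max-principle} combined with the strong maximum principle for the (uniformly elliptic) Monge-Ampère equation forces $D_c$ and $D_{c'}$ to coincide near $p$, hence globally by connectedness and unique continuation, contradicting $K_c\not\subset K_{c'}$ unless $c=c'$, in which case the inclusion is trivial.

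The main obstacle I anticipate is making the ``first contact at a finite point'' step rigorous: a priori $D_c$ and $D_{c'}$ are noncompact, so one must rule out the contact escaping to infinity. This is where Theorem \ref{th_as_plateau_pb}(2) and Corollary \ref{coro.plateau_no_sol}(2) are essential — the precise control of how $D_c$ and $D_{c'}$ approach their ideal boundaries $c\subset\overline{\mathrm{Int}(c')}$ shows that near infinity $D_c$ is trapped on the interior side of the equidistant foliation of $(K_{c'})^c$, so a crossing, if any, is confined to a compact region and a genuine tangential first-contact point exists. A clean way to organize this is to work in the Gauss lift / Monge-Ampère picture of \S\ref{sss.MAunit_tangent_bundle}, where the equidistant flow corresponds to a well-understood deformation of $J$-holomorphic curves and the maximum principle becomes the strong maximum principle for a uniformly elliptic PDE; but the geometric argument above should suffice once the escape-to-infinity issue is handled with Theorem \ref{th_as_plateau_pb}.
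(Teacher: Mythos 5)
Your core mechanism is exactly the one the paper uses: sweep the exterior of $K_{c'}$ by the outward equidistants of $D_{c'}$, which by the Riccati comparison have $\kappa_{ext}>k$ when $\sect_h\le -1$, locate a last-contact tangency with $D_c$, and contradict the local maximum principle (Theorem \ref{th.local_max-principle}); the $t_0=0$ case is even simpler than you make it, since $\sup_{K_c}d(\cdot,K_{c'})=0$ already \emph{is} the conclusion $K_c\subset K_{c'}$. However, two things need to be flagged. First, your boundary bookkeeping is inverted: the paper's convention is $\bord K_c=\overline{\Ext(c)}$, not $\overline{\mathrm{Int}(c)}$ (the convex set lies on the exterior side, and the normal rays of Theorem \ref{th_as_plateau_pb}(2) reach $\mathrm{Int}(c)$ through the \emph{complement} of $K_c$). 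With your identification, $\Ext(c)\subset\Ext(c')$ would give $\overline{\mathrm{Int}(c')}\subset\overline{\mathrm{Int}(c)}$, i.e.\ $\bord K_{c'}\subset\bord K_c$, the opposite of what you assert; likewise $c\subset\overline{\Ext(c')}$, not $\overline{\mathrm{Int}(c')}$, and since $\mathrm{Int}(c')\subset\mathrm{Int}(c)$ (not the reverse) the outward normal rays of $D_{c'}$ need not reach the ideal points of $\mathrm{Int}(c)$. These slips are repairable, but they show the side on which the barrier sweep takes place has not been pinned down.

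Second, and more seriously, the step you yourself single out as the main obstacle — confining the first contact to a compact region — is asserted rather than proved. Theorem \ref{th_as_plateau_pb}(2) and Corollary \ref{coro.plateau_no_sol}(2) do not by themselves give the claimed trapping of $D_c$ near infinity: the hypothesis $\Ext(c)\subset\Ext(c')$ allows $c$ and $c'$ to touch or even share arcs, and then a maximizing sequence for $d(\cdot,K_{c'})$ on $K_c$ could a priori escape toward a common ideal point, so that no tangency point at $t_0$ exists. This is precisely where the paper's argument does genuine work: it first produces a relatively compact open set $\Omega$ with $K_c\moins\Omega\subset\mathrm{Int}(K_{c'})\moins\Omega$, and inside $\Omega$ it finds a tangency of $D_c$ either with an equidistant of $D_{c'}$ \emph{or with a cylinder over a geodesic} (which also has $\kappa_{ext}\ge 1>k$), following \cite[Lemmas 2.3.5 and 2.3.6]{ALS}. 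The cylinders (equivalently, the possible degeneration to tubes in a Labourie-compactness limiting argument) are the extra barrier needed to control the ends, and your proposal offers no substitute for them; as written, the proof has a gap at exactly this point.
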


The idea is that the condition $\Ext(c)\dans\Ext(c')$ implies that there exists a relatively compact open set $\Omega$ such that the inclusion $K_c\moins\Omega\dans \text{Int}(K_{c'})\moins\Omega$ holds. Now if the inclusion does not hold true inside $\Omega$, there will be a tangency point between $D_{c}$ and some equidistant to $D_c'$ or with some cylinder over some geodesic (see the proof of \cite[Theorem 2.3.1]{ALS}) which contradicts the local geometric maximum principle (equidistant to a $k$-surface and cylinder both have extrinsic curvatures $>k$). We refer to \cite[Lemmas 2.3.5 and 2.3.6]{ALS} for detailed proofs.

\subsubsection{Strong geometric maximum principle}

Using the fact that $k$-surfaces are solutions of some elliptic problem (see the previous paragraph), we can also prove a strong geometric maximum principle.

\begin{theorem}[Strong geometric maximum principle]\label{th.strong_max_pcpl}
Let $S$ and $S'$ be two $k$ surfaces in $X$. Assume that $S'$ is an exterior tangent to $S$. Then $S=S'$.
\end{theorem}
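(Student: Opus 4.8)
The plan is to combine the local geometric maximum principle (Theorem \ref{th.local_max-principle}) with the strong maximum principle for second-order elliptic equations, realized through the Monge--Ampère description of $k$-surfaces. First I would set up local coordinates near a tangency point $p \in S \cap S'$: since $S'$ is an exterior tangent to $S$ at $p$, the surfaces are tangent at $p$ in the usual sense, they share the unit normal $N(p)$, and locally $S'$ lies on the $+N$ side of $S$. Using the exponential chart of $X$ at $p$ adapted to the common tangent plane, both $S$ and $S'$ are graphs over a neighbourhood of the origin in $T_p S$ of functions $u$ and $u'$ with $u(0) = u'(0) = 0$, $Du(0) = Du'(0) = 0$, and $u' \geq u$ near $0$. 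The condition $\kappa_{ext} = k$ for a graph in a Riemannian manifold translates, as in \S \ref{sss.J-holo} and \S \ref{sss.MAunit_tangent_bundle}, into a fully nonlinear second-order PDE of Monge--Ampère type $\mathcal{M}[u] = k$ whose linearization at a convex solution is uniformly elliptic (the key point being that the Hessian of a convex $k$-surface is positive definite, so the cofactor matrix appearing in the linearized operator is positive definite).

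The second step is the standard subtraction trick: writing $w = u' - u \geq 0$, the two equations $\mathcal{M}[u'] = \mathcal{M}[u] = k$ give, after applying the mean value theorem along the segment $t \mapsto tu' + (1-t)u$ (all of whose members are convex, hence admissible), a linear elliptic equation $L w = 0$, where $L = a^{ij}(x)\,\partial_i\partial_j + b^i(x)\,\partial_i$ has continuous coefficients and $(a^{ij})$ positive definite. Here one uses that $S$ and $S'$ are genuine $k$-surfaces — in particular both convex, by Gauss' equation \eqref{eq.Gauss_eq_ksurf} together with $\kappa_{ext} = k > 0$ forcing the shape operator to be definite — so the interpolating family stays in the region of ellipticity. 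Since $w \geq 0$, $w(0) = 0$, and $w$ attains an interior minimum at the origin, Hopf's strong maximum principle for $L$ forces $w \equiv 0$ on a neighbourhood of $p$; thus $S$ and $S'$ coincide near $p$.

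The third step upgrades this local coincidence to global equality $S = S'$. The set $\{q \in S : q \in S',\ T_q S = T_q S'\}$ of "order-two contact" points is closed in $S$ by continuity, and the argument above shows it is open. Some care is needed because $S$ and $S'$ are only \emph{immersed}, not embedded, and \emph{complete}: one should run the argument on the abstract surface $S$, noting that the exterior-tangency hypothesis propagates — once $S$ and $S'$ agree to second order at $q$ and hence near $q$, the one-sided position $\exp(\lambda N) \in S'$ with $\lambda \geq 0$ persists on the overlap, so the hypotheses of Theorem \ref{th.local_max-principle} (and of the local argument) are available at every point of the contact set. Connectedness of $S$ then yields $S \subset S'$, and by completeness (the immersed image of $S$ is closed as a $k$-surface) together with the symmetric role one concludes $S = S'$.

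I expect the main obstacle to be the care required at the immersed/complete level rather than the PDE core: verifying that the exterior-tangency condition is genuinely propagated along the contact set (so that at each new tangency point one really is in the one-sided situation needed to invoke Hopf), and handling the possibility of self-intersections of the immersions, so that "coincide near $p$" is interpreted correctly on the domain surfaces and not merely in $T^1X$. The elliptic input itself — uniform ellipticity of the linearized Monge--Ampère operator on convex solutions, and Hopf's lemma — is entirely standard once the geometric setup is in place; this is precisely the "strong" refinement of Theorem \ref{th.local_max-principle} alluded to in the remark that $k$-surfaces solve an elliptic problem.
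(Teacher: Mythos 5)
Your proof is correct but follows a genuinely different route than the paper's. The paper derives the strong maximum principle from Proposition~\ref{prop.harmonic_pol}, which in turn rests on Aronszajn's unique continuation principle \cite{Aronszajn}: writing $S'$ as a graph of $f \geq 0$ over $S$ near the tangency point, the dichotomy says that either $f \equiv 0$ or the leading term of the Taylor expansion of $f$ is (after a linear change of variables) a nonzero harmonic polynomial, which must change sign near the tangency point — a contradiction with $f \geq 0$. Your proof instead linearizes the Monge--Ampère operator along the convex interpolation $t u' + (1-t)u$, obtaining a linear uniformly elliptic equation $Lw = 0$ for the nonnegative difference $w = u' - u$ with an interior zero, and invokes Hopf's strong maximum principle. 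The two mechanisms are closely related (both are incarnations of ellipticity at a one-sided tangency) but they are distinct tools: the Aronszajn route extracts a quantitative description of the possible local contact (finite-order vanishing with a harmonic leading term), whereas Hopf's lemma gives the qualitative conclusion $w \equiv 0$ directly without any Taylor-expansion analysis. The paper explicitly acknowledges that ``a maximum principle for elliptic problems'' would also suffice, so your approach is a legitimate and arguably more standard alternative. In fact, you are somewhat more careful than the paper at the global step: the paper's proof only shows local coincidence and leaves the passage to $S = S'$ implicit, whereas you spell out the open-and-closed argument, including the propagation of the one-sided tangency to every new contact point and the bookkeeping needed for immersed rather than embedded complete surfaces. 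One small point you could tighten: the linearization of the extrinsic curvature operator in a curved ambient manifold generally carries a zeroth-order term $c(x)w$ (from the metric's dependence on the ambient position $u$), not just $a^{ij}\partial_i\partial_j + b^i\partial_i$; since $w \geq 0$, the standard trick of absorbing the positive part of $c$ still lets Hopf's lemma apply, but this deserves a sentence.
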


This result can be proven using a maximum principle for elliptic problems. For example in  \cite{ALS}, it is shown that it follows by an application of Aronszajn's unique continuation principle \cite{Aronszajn}. Indeed this principle implies the following result (see \cite[Lemma 2.5.4]{ALS}).

\begin{prop}\label{prop.harmonic_pol}
Consider a $k$-surface $S\dans X$ and assume that $S'$ can be written as the graph of a smooth function $f$ over an open set $U\dans S$. That is $S'=\iota(S)$ where
$$\iota:x\in U\mapsto\exp(f(x)N(x)).$$

If $\kappa_{ext}^{S'}=k$ everywhere, we have the following dichotomy:
\begin{itemize}
\item either $f=0$ and $S'\dans S$;
\item or $f$ has nonvanishing Taylor series at all points of $U$ and the lower order term of $f$ is, up to precomposition by a linear map, a harmonic polynomial.
\end{itemize}
\end{prop}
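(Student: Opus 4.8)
The plan is to reduce the dichotomy to a statement about a single scalar elliptic PDE satisfied by the "difference function" $f$, and then invoke Aronszajn's unique continuation principle. First I would set up local coordinates: since $S$ is a $k$-surface, near a point $x_0\in U$ it can be written (via its Gauss lift and Theorem~\ref{th.Jholo=MA}) as a graph over its tangent plane of a function whose Hessian has determinant $k$; concretely, after an isometry of $X$ and a choice of Fermi-type coordinates along $S$, both $S$ and $S'=\iota(S)$ become graphs $x\mapsto u(x)$ and $x\mapsto u(x)+f(x)$ of $C^\infty$ functions over a common domain in $\R^2$, and the condition $\kappa_{ext}=k$ on each becomes a (fully nonlinear, but elliptic on the convex locus) Monge--Amp\`ere-type equation $\Phi(x,u,Du,D^2u)=k$ whose linearization at $u$ is uniformly elliptic because the second fundamental form of $S$ is positive definite.

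Next I would subtract the two equations. Writing $\Phi(x,u+f,D(u+f),D^2(u+f))-\Phi(x,u,Du,D^2u)=0$ and applying the mean value theorem / fundamental theorem of calculus in the last arguments, one gets a linear equation of the form
\begin{equation*}
a^{ij}(x)\,\partial_{ij}f + b^i(x)\,\partial_i f + c(x)\,f = 0,
\end{equation*}
where the coefficients $a^{ij},b^i,c$ are smooth (they are integrals of derivatives of $\Phi$ along the segment joining the $1$-jet of $u$ to that of $u+f$) and, crucially, $(a^{ij})$ is positive definite near $x_0$ by ellipticity at $u$. This is exactly the hypothesis of Aronszajn's theorem \cite{Aronszajn}: a solution of such an equation that vanishes to infinite order at one point vanishes identically on the connected domain. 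Hence either $f\equiv 0$ on the component of $U$ containing $x_0$ --- giving $S'\subset S$ after unwinding the graph description --- or $f$ has a nonzero Taylor series at every point of $U$.

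In the latter case I would extract the leading term. Let $P$ be the lowest-order nonzero homogeneous part of the Taylor expansion of $f$ at a point where (after the normalization above) the linearized operator has principal part $a^{ij}(x_0)\partial_{ij}$ with constant coefficients to leading order; since $P$ is the lowest-degree term, plugging the Taylor series into $a^{ij}(x)\partial_{ij}f+\dots=0$ and collecting the terms of minimal degree forces $a^{ij}(x_0)\,\partial_{ij}P=0$, i.e. $P$ is harmonic for the constant-coefficient elliptic form $a^{ij}(x_0)$. A linear change of coordinates diagonalizing $a^{ij}(x_0)$ to the identity turns $P$ into an honest harmonic polynomial, which is the claimed conclusion (``up to precomposition by a linear map'').

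The main obstacle --- and the step deserving the most care --- is the reduction to a genuinely \emph{scalar} elliptic equation with the right structure. A $k$-surface is most naturally described through its Gauss lift as a $J$-holomorphic curve in the holonomy bundle (\S\ref{sss.MAunit_tangent_bundle}), not as a graph solving one scalar PDE, so one must check that on the region where $S'$ is an exterior-tangent graph over $S$ the convexity is preserved and the extrinsic-curvature operator genuinely linearizes to a uniformly elliptic scalar operator with smooth coefficients in terms of the graphing function $f$; the positivity of the second fundamental form of the $k$-surface (guaranteed since $0<k$, so the shape operator has positive determinant, and the orientation is chosen so it is positive definite) is what makes this work. Once that normal form is in place, the rest is a direct and standard application of Aronszajn's principle together with the elementary observation about leading Taylor terms. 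I expect the remaining computations (writing out the coefficients $a^{ij}$, verifying smoothness and ellipticity) to be routine, and would only sketch them, referring to \cite[Lemma~2.5.4]{ALS} for the details.
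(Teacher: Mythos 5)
Your proposal is correct and takes essentially the same route as the paper, which does not reprove the statement but presents it precisely as an application of Aronszajn's unique continuation principle \cite{Aronszajn} to the linearized equation satisfied by the graphing function, deferring details to \cite[Lemma 2.5.4]{ALS} --- exactly the scheme you carry out. The one point needing care, which you rightly flag, is ellipticity of the averaged coefficients $a^{ij}$ along the whole segment of jets (not just at $u$); this is harmless where it matters, since near a point of infinite-order vanishing the jets of $f$ are uniformly small, and at a tangency point the segment of shape operators is a convex combination of the two positive-definite second fundamental forms.
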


The strong maximum principle follows directly from this proposition. Indeed, suppose that $S$ and $S'$ are $k$-surfaces such that $S'$ is an exterior tangent to $S$ at a point $x$. Then $S'$ can be written as a graph of a function $f$ over $S$ in a neighbourhood of $x$. Since $S'$ is an exterior tangent to $S$ at $x$, we must have $f\geq 0$. By Proposition \ref{prop.harmonic_pol}, if $f$ is not identically $0$ then it behaves close to $x$ as a harmonic polynomial (after a linear change of coordinates), which is absurd since harmonic polynomials change sign.

\subsubsection{Continuity of the solutions of the asymptotic Plateau problem}

We endow the set of oriented Jordan curves of $\bord X$ with the Hausdorff topology. We identify the convex set $K_c$ and its closure in $\overline{X}=X\cup\bord X$ so it becomes compact. If $K$ is a compact convex subset of $\overline{X}$ we let $\bord K$ denote $K\cap\bord X$. In particular, for every Jordan curve $c\dans\bord X$,
$$\bord K_c=\overline{\Ext(c)}.$$
Let $\hat \partial K$ denote $\partial K\cup\bord K\dans\widehat{X}$. The first lemma is proven in \cite[2.2.2]{ALS}.

\begin{lemma}[Continuity of the boundary]\label{lem_cont_bdr}
The map $\hat\partial:K\mapsto\hat \partial K$, defined over the space of compact, convex subsets of $\overline{X}$, is continuous for the Hausdorff topology.
\end{lemma}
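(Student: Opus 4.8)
The plan is to prove continuity via the standard description of Hausdorff convergence in the compact metric space $\overline X = X \cup \bord X$: for nonempty compact subsets, $K_n \to K$ is equivalent to the conjunction of \emph{outer semicontinuity} (every limit of points $x_{n_j} \in K_{n_j}$ lies in $K$) and \emph{inner semicontinuity} (every point of $K$ is a limit of points $x_n \in K_n$). First I would fix once and for all a metric $\rho$ on $\overline X$ inducing its topology, recording that on compact subsets of $X$ it is bi-Lipschitz to the distance $\dist$ of $h$, and that $X$, being Cartan--Hadamard with pinched negative curvature, has geodesically convex metric balls and horoballs, with the horoballs centred at a given point $\xi \in \bord X$ forming a neighbourhood basis of $\xi$ in $\overline X$. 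The task then reduces to checking the two semicontinuity properties for the map $n \mapsto \hat\partial K_n$.

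For outer semicontinuity, suppose $x_{n_j} \in \hat\partial K_{n_j}$ and $x_{n_j} \to x$. Since $x_{n_j} \in K_{n_j}$, outer semicontinuity of $K_n \to K$ gives $x \in K$; if $x \in \bord X$ this already says $x \in \bord K \subset \hat\partial K$, so I only need to treat $x \in X$ and exclude $x \in \mathrm{int}(K)$. Here I would argue that if a ball $\overline{B(x,2r)}$ lies in $K$, then by inner semicontinuity $K_n$ contains points $\rho$-close to all of this ball, and convexity of $K_n$ forces $B(x,r) \subset \mathrm{int}(K_n)$ for $n$ large; since $x_{n_j} \to x \in X$, this contradicts $x_{n_j} \in \partial K_{n_j} \cup \bord K_{n_j}$.

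For inner semicontinuity, given $x \in \hat\partial K$ I would pick $y_n \in K_n$ with $y_n \to x$ (inner semicontinuity of $K_n \to K$); the only case needing work is when $y_n \in \mathrm{int}(K_n) \cap X$. If $x \in \bord X$, I would follow the geodesic ray $r_n$ issued from $y_n$ asymptotic to $x$: either it stays in the closed set $K_n$, forcing $x = r_n(\infty) \in \bord K_n$, or it first meets $\partial K_n$ at a point $z_n$; by convexity of the horoballs centred at $x$ and the fact that $r_n$ lies eventually in any such horoball once $y_n$ is $\rho$-close to $x$, one gets $z_n \to x$. If instead $x \in \partial K \cap X$, I would use that $x \in \partial K$ supplies $w_m \to x$ with $w_m \notin K$, hence $w_m \notin K_n$ for $n$ large (as $K$ is closed); then for $\eps$ small and $n$ large the $h$-geodesic segment from $y_n \in \mathrm{int}(K_n)$ to $w_m \notin K_n$ remains in a ball of radius $C\eps$ about $x$ (balls being convex) and crosses $\partial K_n$ at a point within $C\eps$ of $x$. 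Taking $\eps = \eps_n \to 0$ along a diagonal choice of $m=m(n)$ yields the required $x_n \in \hat\partial K_n$ with $x_n \to x$.

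The main obstacle I anticipate is not the bookkeeping of Kuratowski limits but the uniform geometric control near $\bord X$: making precise the convex-hull estimate in the outer part and, especially, the horoball argument in the inner part, i.e. that a convex set reaching close to an ideal point $\xi$ must have boundary points close to $\xi$ in $\overline X$. Both amount to elementary but slightly technical comparison-geometry facts in a pinched Cartan--Hadamard manifold (convexity of balls and horoballs, horoballs forming a neighbourhood basis of ideal points), after which the argument is routine; and degenerate lower-dimensional convex sets require no special treatment, since none of the steps above used that $K_n$ or $K$ has nonempty interior.
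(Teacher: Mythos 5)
Your overall scheme is sound, and for what it is worth the paper does not prove this lemma internally but defers to \cite[Lemma 2.2.2]{ALS}, so there is no in-text argument to compare against: the reduction to outer and inner semicontinuity in the compact space $\overline X$, the convexity argument showing that a limit point lying in $\mathrm{int}(K)$ would eventually lie in $\mathrm{int}(K_n)$, and the segment-crossing argument at a point of $\partial K\cap X$ are all correct. The genuine gap is in the one step you yourself flag as the crux, namely producing points of $\partial K_n$ near an ideal point $x\in\bord K$. Your mechanism is that once $y_n$ is $\rho$-close to $x$, the ray $r_n=[y_n,x)$ lies in any prescribed horoball centred at $x$, so that the first exit point $z_n$ is close to $x$. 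This fails on two counts. First, horoballs centred at $x$ (even with $x$ attached) are \emph{not} neighbourhoods of $x$ in $\overline X$: their closures meet $\bord X$ only at $x$, so they cannot form a neighbourhood basis. Second, and decisively, $\rho$-closeness of $y_n$ to $x$ does not force $y_n$ (hence the ray) into a fixed horoball centred at $x$: in the upper half-plane model with $x=\infty$, the points $y_n=n+i$ converge to $\infty$ in $\overline{\HH^2}$, yet neither they nor the vertical rays $[y_n,\infty)$ ever enter the horoball $\{\mathrm{Im}>2\}$. If instead you only use that the \emph{tail} of each ray eventually enters the horoball, convexity tells you nothing about the initial segment, which is exactly where $z_n$ may sit. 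So as written the justification that $z_n\to x$ does not stand.

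The statement you need is nonetheless true and is what should be proved: in a pinched Cartan--Hadamard manifold, if $y_n\to x\in\bord X$ in the cone topology, then the closed rays $[y_n,x]$ converge to $\{x\}$ in the Hausdorff topology of $\overline X$; since $z_n\in[y_n,x)$, this gives $z_n\to x$ immediately. The proof is a visibility/compactness argument rather than a horoball one: if points $p_n\in[y_n,x)$ accumulated at some $p\in X$, then the rays $[p_n,x)$ would converge to $[p,x)$, and the backward directions at $p_n$ (pointing to $y_n$) would converge as well; if $\dist(p_n,y_n)\to\infty$ the limit geodesic through $p$ would have both endpoints equal to $x$, and otherwise $y_n$ would remain bounded, both impossible since $y_n\to x\in\bord X$. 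If instead $p_n$ accumulated at an ideal point $p\neq x$, visibility places points of the segments $[y_n,p_n]$ in a fixed compact subset of $X$, reducing to the previous case. (The true horoball statement is the converse direction, that deep horoballs centred at $x$ have small diameter in $\overline X$, which is not what your argument needs.) With this replacement, and the cosmetic remark that $\rho$ need only be uniformly comparable, not bi-Lipschitz, to $\dist_h$ on compact subsets of $X$, the rest of your proof goes through.
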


The next result is a consequence of Labourie's compactness Theorem \ref{th.Labourie_compactness}, which describes the possible ways $k$ surfaces can degenerate.

\begin{theorem}\label{th.cont_Laboubou}
Let $K\dans\overline{X}$ be the Hausdorff limit of a sequence $K_m$ of compact convex subsets of $\overline{X}$ with non-empty interiors whose boundaries $\partial K_m\dans X$ are smooth and have extrinsic curvature $k$. Then three possibilities can occur
\begin{enumerate}
\item $K$ has non-empty interior and $\partial K$ is smooth with extrinsic curvature $k$. In that case $\partial K_m$ converges to $\partial K$ in the $C^\infty_{loc}$-sense;
\item $K$ is a single point of $\bord X$;
\item $K$ is a complete geodesic of $\overline{X}$.
\end{enumerate}
\end{theorem}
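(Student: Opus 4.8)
The plan is to combine Labourie's compactness Theorem~\ref{th.Labourie_compactness} with the continuity of the boundary map (Lemma~\ref{lem_cont_bdr}) and the global maximum principle (Theorem~\ref{th.global_max}). First I would pass to Gauss lifts: each $\partial K_m$ is a $k$-surface, and its Gauss lift $\widehat{\partial K_m}$ is a Monge-Amp\`ere surface in $T^1X$. Pick basepoints $p_m\in\partial K_m$. Two cases arise according to whether $\widehat{\partial K_m}(p_m)$ stays in a compact part of $T^1X$ or escapes to infinity.

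In the precompact case, Labourie's theorem says that (after passing to a subsequence) the pointed Gauss lifts converge in the Cheeger--Gromov sense either to the Gauss lift of an immersed $k$-surface or to a tube. I would argue that the limiting object is in fact embedded and is exactly $\partial K$: the convex sets $K_m$ converge in Hausdorff distance to $K$, so their boundaries converge as sets, and by Lemma~\ref{lem_cont_bdr} the boundary-at-infinity behaves continuously, so $\partial K_m$ cannot ``lose'' a sheet of surface in the interior. The convexity of $K_m$ (uniform, since $\kappa_{ext}=k$ and $\sect_h\le -1$ pin the geometry) prevents the limit from being a tube wrapped around a geodesic \emph{unless} the sets $K_m$ themselves degenerate, which is excluded in this case because the limit has non-empty interior. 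Hence $K$ has smooth boundary with $\kappa_{ext}=k$, and the convergence upgrades to $C^\infty_{loc}$ by elliptic regularity for the Monge-Amp\`ere equation (this is part of Labourie's compactness statement). This gives alternative (1).

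In the escaping case, $\widehat{\partial K_m}(p_m)\to\infty$ in $T^1X$; equivalently, up to choosing the basepoints well, every point of $\partial K_m$ leaves every compact set, so $K$ has empty interior and $K\subset\bord X\cup(\text{one geodesic})$. Here I would invoke Labourie's theorem again, now recentering so that the pointed limit is a \emph{tube}, i.e. (a cover of) the normal bundle $\mathrm{N}\gamma$ of a complete geodesic $\gamma$. Projecting down to $X$, the limit of the $K_m$ is contained in $\gamma\cup\bord X$; a short argument using that $K$ is the Hausdorff limit of \emph{convex} sets shows $K$ is itself convex, hence either a single ideal point (when $\gamma$ also degenerates, i.e. the two endpoints of the limiting geodesics collide) — alternative (2) — or the whole geodesic $\gamma$ together with its two ideal endpoints — alternative (3). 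To rule out any intermediate possibility (e.g. $K$ a geodesic segment or a $2$-dimensional degenerate convex set), I would use that $\bord K_m=\overline{\Ext(c_m)}$ and the continuity of $\hat\partial$: the boundary circles $c_m$ either converge to a circle (the non-degenerate case already handled) or collapse, and a collapsing family of Jordan curves on $\bord X$ forces the limit to be a point or an arc, but convexity in $\overline X$ together with the tube structure forces a full geodesic, not a proper segment.

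The main obstacle I expect is the \textbf{matching step}: showing that the abstract Cheeger--Gromov limit produced by Labourie's theorem is genuinely the Hausdorff limit $\partial K$ of the sets, and in particular that no part of the limiting $k$-surface is ``hidden'' or multiply covered. Labourie's theorem is purely local (pointed convergence of immersed surfaces), whereas Theorem~\ref{th.cont_Laboubou} is a global statement about convex bodies; bridging the two requires controlling where the basepoints $p_m$ go and ruling out that different subsequences of basepoints produce different limits. The tool for this is the global geometric maximum principle (Theorem~\ref{th.global_max}), which gives monotonicity $K_c\subset K_{c'}$ whenever $\Ext(c)\subset\Ext(c')$: by squeezing each $K_m$ between $k$-discs spanned by nearby fixed curves one gets uniform interior/exterior control, so the surface pieces cannot drift apart, and the Hausdorff limit of the bodies and the Cheeger--Gromov limit of the Gauss lifts necessarily agree. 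Once this is set up, trichotomy follows by reading off which of the three Labourie alternatives (immersed $k$-surface / tube with non-collapsed geodesic / tube with collapsed geodesic) occurred.
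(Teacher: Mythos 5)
Your toolkit is the right one --- the paper itself offers no more proof than the remark that the statement is a consequence of Labourie's compactness theorem, with details deferred to \cite{ALS} --- but your case division is genuinely misaligned with the trichotomy, and this is where the argument breaks. Alternative (3), collapse of the $K_m$ onto a complete geodesic, is precisely a situation with \emph{precompact} basepoints: points of $\partial K_m$ near a point of the limiting geodesic stay in a fixed compact set, and the Cheeger--Gromov limit of the Gauss lifts there is a tube. Conversely, in your ``escaping'' case, where all of $\partial K_m$ eventually leaves every compact subset of $T^1X$, Labourie's theorem cannot be applied at all --- there is no precompact pointed subsequence --- so your plan to ``recenter so that the pointed limit is a tube'' is vacuous there; that case has to be handled by bare convexity (the limit then meets no compact part of $X$ along its boundary, and if $K$ contained two distinct ideal points, limits of geodesic segments joining points of the $K_m$ would force the whole geodesic between them into $K\cap X$, so one ends up with a single ideal point, i.e.\ alternative (2)). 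As written, your dichotomy proves neither that the precompact case lands in (1) nor that the escaping case accounts for (3).

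Related to this is a circularity: you exclude the tube limit in the precompact case ``because the limit has non-empty interior'', but non-empty interior is part of the conclusion, not a hypothesis --- nothing about precompactness of basepoints prevents the $K_m$ from collapsing onto a geodesic passing through the compact region. The logic must run the other way: if at some precompact basepoint the Cheeger--Gromov limit is the Gauss lift of a $k$-surface, then local $C^\infty$ convergence together with convexity forces the $K_m$ to contain a ball of definite size, hence $K$ has non-empty interior, and convexity (the cone from any boundary point over an interior ball lies in $K$) then rules out tube behaviour at every other boundary point, identifying $\partial K$ with the limiting $k$-surface and giving (1); if instead the limit is a tube at every precompact basepoint, then $K\cap X$ is locally one-dimensional, and one must still argue from convexity of the Hausdorff limit that it is a \emph{complete} geodesic rather than a point of $X$, a segment, a ray, or a two-dimensional convex set with empty interior. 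Your sketch only gestures at this last step, and the gesture invokes $\bord K_m=\overline{\Ext(c_m)}$, which is not among the hypotheses of the theorem: the $K_m$ are arbitrary compact convex bodies with smooth $k$-surface boundaries, not a priori Plateau solutions $K_{c_m}$ of Theorem \ref{th_as_plateau_pb}. The ``matching step'' you isolate (Cheeger--Gromov limit versus Hausdorff limit) is a real issue, but it is secondary to getting the precompact/escaping dichotomy to line up correctly with the three alternatives.
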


\begin{theorem}[Continuity of the Plateau solution]\label{th.cont.Plateau}
The function $c\mapsto K_c$ from the space of oriented Jordan curves to the set of compact convex subsets of $\overline{X}$ (both endowed with the Hausdorff topology) is continuous. Moreover the function $c\mapsto D_c$ is continuous in the $C_{loc}^\infty$ sense.
\end{theorem}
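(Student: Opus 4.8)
The plan is to argue by compactness, reducing the statement to the identification of every subsequential Hausdorff limit, and then to use the uniqueness part of Theorem~\ref{th_as_plateau_pb} to make that identification. Since $\overline{X}=X\cup\bord X$ is compact, Blaschke's selection theorem makes the set of compact convex subsets of $\overline{X}$ sequentially compact for the Hausdorff topology, and a Hausdorff limit of convex sets is again convex. Hence, given oriented Jordan curves $c_m\to c$, it suffices to show that any Hausdorff limit $K_\infty$ of a subsequence of $(K_{c_m})$ equals $K_c$; then $K_{c_m}\to K_c$, and the $C^\infty_{loc}$ convergence $D_{c_m}=\partial K_{c_m}\to\partial K_c=D_c$ will be read off from Theorem~\ref{th.cont_Laboubou}.

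First I would fix such a limit $K_\infty=\lim_m K_{c_m}$ (after relabelling). Since each $\partial K_{c_m}=D_{c_m}$ is smooth with extrinsic curvature $k$ and $K_{c_m}$ has non-empty interior, Theorem~\ref{th.cont_Laboubou} leaves only three possibilities for $K_\infty$: (a) a convex set with non-empty interior and smooth boundary of extrinsic curvature $k$, with moreover $D_{c_m}\to\partial K_\infty$ in $C^\infty_{loc}$; (b) a single point of $\bord X$; or (c) a complete geodesic of $\overline X$. To discard (b) and (c), which have empty interior, I would produce, for a small ``push'' parameter, oriented Jordan curves $c^{-},c^{+}$ with $\overline{\Ext(c^{-})}\dans\Ext(c)$ and $\overline{\Ext(c)}\dans\Ext(c^{+})$ (such curves exist, e.g.\ by pushing off a Schoenflies chart and using uniform continuity). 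Convergence of the oriented curves then gives $\Ext(c^{-})\dans\Ext(c_m)\dans\Ext(c^{+})$ for large $m$, hence $K_{c^{-}}\dans K_{c_m}\dans K_{c^{+}}$ by the global maximum principle (Theorem~\ref{th.global_max}), and letting $m\to\infty$ yields $K_{c^{-}}\dans K_\infty$; since $K_{c^{-}}$ has non-empty interior so does $K_\infty$, forcing case (a). (Alternatively, Lemma~\ref{lem_cont_bdr} gives $\hat\partial K_{c_m}\to\hat\partial K_\infty$, and $\hat\partial K_{c_m}\supset\overline{\Ext(c_m)}\to\overline{\Ext(c)}$ forces $\hat\partial K_\infty\supset\overline{\Ext(c)}$, which has non-empty interior in $\bord X$ — impossible in cases (b), (c).)

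To finish I would identify $K_\infty$ in case (a). The surface $\partial K_\infty$ is complete, embedded, with $\kappa_{ext}=k$; by Corollary~\ref{coro.plateau_no_sol}(1) it is not closed, so $K_\infty$ is not compact in $X$, i.e.\ $K_\infty\cap\bord X\neq\emptyset$, and a non-compact properly embedded convex surface bounding a convex body with non-empty interior in $\overline X$ is a disc with Jordan ideal boundary $c_\infty=\bord(\partial K_\infty)$. Orienting $c_\infty$ by $\Ext(c_\infty)=\bord K_\infty$, the disc $\partial K_\infty$ solves the asymptotic Plateau problem for $c_\infty$, so by uniqueness in Theorem~\ref{th_as_plateau_pb}, $\partial K_\infty=D_{c_\infty}$ and $K_\infty=K_{c_\infty}$. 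It remains to check $c_\infty=c$, i.e.\ $\bord K_\infty=\overline{\Ext(c)}$. Using the curves $c^{\pm}$ above together with $K_{c^{-}}\dans K_\infty\dans K_{c^{+}}$, I get $\overline{\Ext(c^{-})}=\bord K_{c^{-}}\dans\bord K_\infty\dans\bord K_{c^{+}}=\overline{\Ext(c^{+})}$; letting $c^{\pm}\to c$ squeezes $\bord K_\infty$ onto $\overline{\Ext(c)}$ from both sides, so $\bord K_\infty=\overline{\Ext(c)}$ and hence $K_\infty=K_c$ by uniqueness. As every subsequential limit is $K_c$, we obtain $K_{c_m}\to K_c$, and applying Theorem~\ref{th.cont_Laboubou} to the full sequence $(K_{c_m})$ gives the asserted $C^\infty_{loc}$ convergence $D_{c_m}\to D_c$.

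The hard part will be exactly the interaction of Hausdorff convergence in $\overline X$ with the ideal boundary $\bord X$: intersection with the closed set $\bord X$ is only upper semicontinuous, so neither the exclusion of degenerate limits nor the matching of the inherited orientation is automatic. Both are handled by coupling the monotonicity of the global maximum principle (Theorem~\ref{th.global_max}) with nested tame approximations of $c$, supplemented if desired by Lemma~\ref{lem_cont_bdr} and by the escaping normal rays of part (2) of Theorem~\ref{th_as_plateau_pb}; everything else — compactness, convexity of limits, and the trichotomy of Theorem~\ref{th.cont_Laboubou} — is routine once those tools are in place.
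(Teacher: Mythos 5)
Your proposal is correct and follows essentially the same route as the paper's proof: squeezing $c$ between auxiliary curves $c^{\pm}$, applying the global maximum principle (Theorem~\ref{th.global_max}) to nest $K_{c^-}\dans K_{c_m}\dans K_{c^+}$, using Lemma~\ref{lem_cont_bdr} to pin down the ideal boundary of any accumulation point, invoking the trichotomy of Theorem~\ref{th.cont_Laboubou} to rule out degenerate limits and get $C^\infty_{loc}$ convergence, and concluding by uniqueness in Theorem~\ref{th_as_plateau_pb}. The only differences are presentational (explicit Blaschke compactness and a more detailed identification of the limit disc), not substantive.
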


\begin{proof}
Let $(c_n)$ be a sequence of Jordan curves converging to $c$ in the Hausdorff sense. Let $c^+$ and $c^-$ be two Jordan curves such that $c^-\dans\Ext(c)$ and $c\dans\Ext(c^+)$. So, in particular, when $n$ is large enough, $c^-\dans\Ext(c_n)$ and $c_n\dans\Ext(c^+)$. Using the global geometric maximum principle, we have that $K_{c^-}\dans K_{c_n}\dans K_{c^+}$ for all such $n$. So any accumulation point $K$ of the sequence $K_{c_n}$ satisfies $K_{c^-}\dans K\dans K_{c^+}$. Using Lemma \ref{lem_cont_bdr}, we have $\overline{\Ext(c^-)}=\bord K_{c^-}\dans\bord K\dans \bord K_{c^+}=\overline{\Ext(c^+)}$. Since $c^{\pm}$ are arbitrary, this implies that $\bord K=\overline{\Ext(c)}$. By Theorem \ref{th.cont_Laboubou}, $\partial K$ is a $k$-surface, so $K=K_c$ by uniqueness of the asymptotic Plateau problem.
\end{proof}

\subsection{Foliated Plateau problems}

We now discuss the dynamical properties of several spaces that can be seen as $2$-dimensional analogues of the geodesic flow, which aligns with Gromov’s theory of foliated Plateau problems : \cite{Gromov_FolPlateau1,Gromov_FolPlateau2}.

\subsubsection{Labourie's phase space}\label{sss.Lab_lam} Recall that $(M,h_0)$ is a closed hyperbolic $3$-manifold, that $h$ is a Riemannian manifold on $M$ with $\sect_h\leq -1$ and $X$ denotes the universal cover of $M$. We fix a number $k\in(0,1)$.

The phase space of the geodesic equation consists of trajectories with initial conditions. This is the space of all pointed geodesics, also identified with the unit tangent bundle of the manifold.

Analogously, we consider the space $\mathcal{S}_k(X)$ of all pointed immersed $k$-surfaces $(S,p)$. Let $\mathcal{T}(X)$ be the set of tubes. By Labourie's compactness theorem, the space
$$\overline{\mathcal{S}_k(X)}=\mathcal{S}_k(X)\cup\mathcal{T}(X),$$
is closed for the Cheeger-Gromov topology, inside the space of all pointed immersed surfaces inside the unit tangent bundle of $X$. Labourie proves that it comes with
\begin{enumerate}
\item \emph{a structure of lamination $\cL$}: two pointed surfaces $(S,x)$ and $(S',x')$ are in the same leaf if $S=S'$;
\item \emph{a horizon map} $\Phi:\overline{\mathcal{S}_k(X)}\to\bord X$ defined by 
$$\Phi(\hat S,v)=\exp(\infty.v)\in\bord X.$$
where $\hat S\dans T^1X$ is the Gauss lift of a $k$ surface or is a tube, and $v\in\hat S$
\item \emph{an action of $\Pi\simeq\pi_1(M)$} that preserves the lamination $\cL$ and is cocompact.
\end{enumerate}

Let $\overline{\mathcal{S}_k(M)}=\overline{\mathcal{S}_k(X)}/\Pi$. This is a compact space endowed with a lamination by $k$-surfaces denoted by $\cL$.

There is a stability result, analogous to Gromov's geodesic rigidity theorem. The local stability theorem was obtained by Labourie in \cite{LabourieInvent} and the global result stated below was recently obtained by Smith in \cite{Smith_asymp}. 

\begin{theorem}[Stability of the $k$-surface lamination]
Let $\alpha:\C\PP^1\to\bord X$ denote the boundary correspondence. Let $\cL_0$ and $\cL$ be the laminations, and $\Phi_0:\overline{\mathcal{S}_k(\Hyp)}\to\C\PP^1$ and $\Phi:\overline{\mathcal{S}_k(X)}\to\bord X$ be the horizon maps described above. Then there exists a unique homeomorphism
$$\Psi:\overline{\mathcal{S}_k(\Hyp)}\to\overline{\mathcal{S}_k(X)}$$
that conjugates the laminations $\cL_0$ and $\cL$ such that $\Phi\circ\Psi=\Phi_0\circ\alpha$. Moreover, $\Psi$ maps $\mathcal{T}(\Hyp)$ to $\mathcal{T}(X)$.
\end{theorem}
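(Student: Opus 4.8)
The plan is to describe the whole phase space $\overline{\mathcal{S}_k(X)}$ in purely boundary-theoretic terms, using the asymptotic Plateau problem as a dictionary, and then to transport that description between $\Hyp$ and $X$ through the boundary correspondence. The first step is to parametrize $\mathcal{S}_k(X)$ by boundary data. By Theorem~\ref{th_as_plateau_pb} every oriented Jordan curve $c\dans\bord X$ spans a unique $k$-disc $D_c=\partial K_c$; orienting $\hat D_c$ so that, by part (2) of that theorem, its horizon map takes values in $\mathrm{Int}(c)$, the same statement shows the horizon map $\Phi$ is \emph{onto} $\mathrm{Int}(c)$, and the transversality of $\hat D_c$ to the vertical distribution $V$ (item (3) of \S\ref{sss.MAunit_tangent_bundle}) together with convexity of $K_c$ shows it is injective, hence a homeomorphism $\Phi\colon\hat D_c\to\mathrm{Int}(c)$. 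By Labourie's theory every complete immersed $k$-surface in $X$ has a Jordan curve as ideal boundary and coincides with the $k$-disc it spans, so each leaf of $\mathcal{L}$ is some $\hat D_c$ and one gets a bijection from $\mathcal{S}_k(X)$ onto
$$\mathcal{P}(\bord X):=\bigl\{(c,\xi)\,:\,c\ \text{an oriented Jordan curve of}\ \bord X,\ \xi\in\mathrm{Int}(c)\bigr\},$$
sending $(\hat S,v)$, with $\hat S=\hat D_c$, to $\bigl(c,\Phi(\hat S,v)\bigr)$. Likewise the Gauss lift of a tube over a complete geodesic $\gamma$ has horizon map onto $\bord X\moins\{\gamma(-\infty),\gamma(+\infty)\}$, which gives a bijection from $\mathcal{T}(X)$ onto a space $\mathcal{Q}(\bord X)$ built from an unordered pair of distinct boundary points and a point of its complement (up to the mild subtlety that tubes may be finite covers of normal bundles). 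Setting $\mathcal{M}(\bord X)=\mathcal{P}(\bord X)\sqcup\mathcal{Q}(\bord X)$, one obtains a bijection $\overline{\mathcal{S}_k(X)}\to\mathcal{M}(\bord X)$.

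The second step is to check this bijection is a $\Pi$-equivariant homeomorphism that intertwines $\Phi$ with the projection $\pi\colon\mathcal{M}(\bord X)\to\bord X$, $(c,\xi)\mapsto\xi$, that carries the leaves of $\mathcal{L}$ to the fibres of $c\mapsto(c,\cdot)$ and the tube fibres, and that carries the $\Pi$-action on $\overline{\mathcal{S}_k(X)}$ to the one induced by $\Pi\acts\bord X$. Equivariance is immediate from the naturality of the Plateau problem, $D_{g c}=gD_c$ for $g\in\Isom^+(X)$, and the intertwining with $\pi$ holds by construction; the one delicate point is continuity in both directions. On the non-degenerate part this is Theorem~\ref{th.cont.Plateau} (continuity of $c\mapsto K_c$, and of $c\mapsto D_c$ in $C^\infty_{\loc}$) together with Lemma~\ref{lem_cont_bdr}. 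Near the tubes it rests on Labourie's compactness theorem~\ref{th.Labourie_compactness} and its consequence Theorem~\ref{th.cont_Laboubou}: a Hausdorff-degenerating family of $k$-discs limits to a $k$-disc, to a single boundary point, or to a complete geodesic of $\overline X$. The phase-space closure adds only tubes (\S\ref{sss.Lab_lam}), so the first alternative means no degeneration and the middle one is a genuine escape to infinity (no limit surface there, which is also why Corollary~\ref{coro.plateau_no_sol}(2) rules out a pseudo-horospheric limit); thus the degenerate stratum of $\overline{\mathcal{S}_k(X)}$ matches $\mathcal{Q}(\bord X)$ topologically and leaves converge to leaves in the expected way.

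The last step is to compare the two models. Applying the above to $X=\Hyp$ (so $\bord\Hyp=\C\PP^1$) and to an arbitrary $X$, and using that the boundary correspondence $\alpha\colon\C\PP^1\to\bord X$ of \S\ref{sss.Morse_boundary} is a $\Pi$-equivariant homeomorphism, I would let $\alpha$ act on boundary data, $(c,\xi)\mapsto(\alpha(c),\alpha(\xi))$ on $\mathcal{P}$ and $(\{\eta_\pm\},\xi)\mapsto(\{\alpha(\eta_\pm)\},\alpha(\xi))$ on $\mathcal{Q}$, getting a $\Pi$-equivariant homeomorphism $\alpha_*\colon\mathcal{M}(\C\PP^1)\to\mathcal{M}(\bord X)$ preserving the $\mathcal{P}/\mathcal{Q}$ splitting and the leafwise structure, with $\pi\circ\alpha_*=\alpha\circ\pi$. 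Conjugating the two identifications by $\alpha_*$ yields $\Psi\colon\overline{\mathcal{S}_k(\Hyp)}\to\overline{\mathcal{S}_k(X)}$, which conjugates $\mathcal{L}_0$ with $\mathcal{L}$, satisfies $\Phi\circ\Psi=\alpha\circ\Phi_0$, and maps $\mathcal{T}(\Hyp)$ onto $\mathcal{T}(X)$ (a tube over $\gamma\dans\Hyp$ is a limit of $D_{c_n}$ with $c_n\to\gamma$, so $\alpha(c_n)\to\gamma'$, the geodesic of $X$ with endpoints $\alpha(\gamma(\pm\infty))$, and $D_{\alpha(c_n)}$ limits to the tube over $\gamma'$ by Theorems~\ref{th.cont.Plateau} and~\ref{th.cont_Laboubou}). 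Uniqueness follows because any $\Psi'$ conjugating the laminations with $\Phi\circ\Psi'=\alpha\circ\Phi_0$ must send $\hat D_c$ to the leaf with horizon image $\alpha(\mathrm{Int}(c))=\mathrm{Int}(\alpha(c))$, a Jordan domain and hence not the complement of two points, so to $\hat D_{\alpha(c)}$, and there $\Psi'=(\Phi|_{\hat D_{\alpha(c)}})^{-1}\circ\alpha\circ\Phi_0|_{\hat D_c}$ since $\Phi$ is injective on that leaf; thus $\Psi'=\Psi$ on the dense set $\mathcal{S}_k(\Hyp)$, hence everywhere by continuity.

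I expect the main obstacle to be the continuity of this parametrization across the degenerate stratum — that $\overline{\mathcal{S}_k(X)}\to\mathcal{M}(\bord X)$ is a homeomorphism and that ``the same'' degenerations occur over $\Hyp$ and over $X$ — which is exactly where Labourie's compactness theorem, the trichotomy of Theorem~\ref{th.cont_Laboubou}, and the non-existence of pseudo-horospheric $k$-surfaces (hence the hypotheses $0<k<1$ and $\sect_h\le-1$) enter decisively; a secondary technical point is checking that the horizon map is a homeomorphism $\hat D_c\to\mathrm{Int}(c)$ on each leaf, which relies on convexity and transversality to $V$.
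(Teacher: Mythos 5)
The paper itself gives no proof of this theorem --- it is cited from Labourie \cite{LabourieInvent} (local version) and Smith \cite{Smith_asymp} (global version) --- so there is nothing in-text to compare against, but your overall strategy (model both phase spaces by boundary data via the asymptotic Plateau problem, then transport through the boundary correspondence $\alpha$) is a reasonable template and close in spirit to what one expects in Smith's argument. You also correctly read the intertwining relation as $\Phi\circ\Psi=\alpha\circ\Phi_0$, which is the only form that typechecks.

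The central gap is the assertion that ``every complete immersed $k$-surface in $X$ has a Jordan curve as ideal boundary and coincides with the $k$-disc it spans.'' This is essentially the content that needs to be proved, and the paper's own framing suggests it is delicate: it stresses that Labourie's work ``considers more general boundary conditions and classifies all admissible ones.'' The space $\overline{\mathcal{S}_k(X)}$ is defined as \emph{all} pointed complete immersed $k$-surfaces together with tubes; a priori the horizon image of a leaf is just a simply connected proper subdomain of $\bord X$ whose complement has more than one point (that much does follow from Corollary~\ref{coro.plateau_no_sol}), and the topological boundary of such a domain need not be a Jordan curve. If $\cL$ has leaves attached to non-Jordan admissible boundary data, your $\mathcal{P}(\bord X)$ does not see them, and both existence and uniqueness of $\Psi$ are no longer addressed by your argument. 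To close this you would need either to prove that the Jordan-curve-spanned stratum is dense \emph{and} that the identification extends continuously to the closure (a separate argument, not implied by Theorems~\ref{th.cont.Plateau} and~\ref{th.cont_Laboubou}, which concern the $C^\infty_{\loc}$-behaviour of the Plateau solution, not of the phase-space identification), or to rebuild $\mathcal{P}$ over the full class of admissible data. A secondary but real defect is the tube stratum: on an $n$-fold covering of a normal bundle $N\gamma$ the horizon map is $n$-to-$1$, so the pair (geodesic endpoints, point of the complement) does not parametrize $\mathcal{T}(X)$, and the step in your uniqueness argument that relies on leafwise injectivity of $\Phi$ fails there; the covering degree has to be carried by the model $\mathcal{Q}$ and shown to be preserved under $\Psi$ by a continuity argument across the degeneration.
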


This space shares remarkable dynamical properties with the geodesic flow. 

\begin{theorem}[Dynamical properties of the $k$-surface lamination]
The following properties hold
\begin{enumerate}
\item the generic leaf of $\cL$ is dense in $\overline{\mathcal{S}_k(M)}$ \cite{LabourieInvent};
\item compact leaves of $\cL$ are dense in $\overline{\mathcal{S}_k(M)}$ \cite{LabourieInvent}; 
\item if $h$ can be deformed through negatively curved metrics to $h_0$ (this assumption may be dropped after Smith's global stability result: \cite{Smith_asymp}), then there are infinitely many probability measures in $\overline{\mathcal{S}_k(M)}$ that are totally invariant for the lamination $\cL$, ergodic, pairwise mutually singular, fully supported and obtained by a coding procedure \cite{LabourieAnnals}.
\end{enumerate}
\end{theorem}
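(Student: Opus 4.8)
For~(1)--(3) the strategy is to transplant the classical arguments of Anosov dynamics, substituting the geometric maximum principles (Theorems~\ref{th.local_max-principle}, \ref{th.global_max}, \ref{th.strong_max_pcpl}) for expansiveness and Labourie's compactness Theorem~\ref{th.Labourie_compactness} for a shadowing property; whenever convenient I would reduce to $X=\Hyp$, $h=h_0$ via the stability homeomorphism $\Psi$ of Labourie and Smith, although for~(1) and~(2) this is not needed, all of these tools being available for any $h$ with $\sect_h\le -1$. For~(1), recall that a leaf of $\cL$ in $\overline{\mathcal{S}_k(M)}$ is the projection of a single Gauss lift $\hat D_c\dans T^1X$, the point on the disc being the free variable, and that such a leaf is dense exactly when arbitrarily large compact pieces of $D_c$, translated by $\Pi$, $C^\infty_{\loc}$-approximate arbitrarily large pieces of every pointed $k$-surface or tube. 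One first proves that $\cL$ is topologically transitive: as for a connected Anosov flow this follows from chain transitivity --- obtained by concatenating controlled pieces of $k$-discs, the cocompactness of the $\Pi$-action together with the global maximum principle Theorem~\ref{th.global_max} and Lemma~\ref{lem_cont_bdr} letting one glue boundary data without losing mass to tubes --- combined with a closing step furnished by Theorem~\ref{th.Labourie_compactness}. Having produced one dense leaf, the passage to ``the generic leaf is dense'' is the standard Baire argument: for a countable basis $(U_n)$ of $\overline{\mathcal{S}_k(M)}$, the $\cL$-saturation of $U_n$ is open (holonomy along a path between plaque charts is a homeomorphism of transversals) and dense (it contains the dense leaf, hence meets every open set), so the set $\bigcap_n\mathrm{sat}(U_n)$ of points whose leaf is dense is residual.

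For~(2), a compact leaf of $\cL$ is a closed $k$-surface $S\subset M$; lifting, $\mathrm{Stab}_\Pi(c)$ acts cocompactly on the $k$-disc $D_c$, so $c$ is the limit set of a cocompact surface subgroup of $\Pi$ (or, in the degenerate stratum, $S$ is a tube over a closed geodesic, which is automatically compact). Density may then be obtained by a closing lemma: a leaf that recurs near itself --- and such leaves exist near every point by~(1) --- can be closed up by solving a fixed-point problem for its ideal boundary curve, which is a contraction because of the monotonicity of $c\mapsto K_c$ under $\Ext$-inclusion (Theorem~\ref{th.global_max}) and the strong maximum principle (Theorem~\ref{th.strong_max_pcpl}), the resulting compact $k$-surface being $C^\infty_{\loc}$-close to the original leaf by continuity of the Plateau solution (Theorem~\ref{th.cont.Plateau}). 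Alternatively, and more robustly, one invokes the Kahn--Markovic surface-subgroup theorem \cite{KM1,KM2} and its equidistribution refinements, which furnish cocompact surface subgroups whose limit quasicircles realize ideal-boundary data approximating any prescribed configuration; Theorem~\ref{th.cont.Plateau} then converts this into $C^\infty_{\loc}$-approximation of the associated $k$-surfaces, so the corresponding compact leaves are dense.

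For~(3), the transversal to $\cL$ is a Cantor-type object --- uncountably many $k$-discs pass through a fixed point of $T^1X$, distinguished by the shapes of their ideal boundaries --- and, following \cite{LabourieAnnals}, one builds a Markov coding of this transverse dynamics: the maximum principles supply transverse expansiveness, Theorem~\ref{th.Labourie_compactness} the shadowing needed to refine a partition with the Markov property, yielding a subshift of finite type $\Sigma$ and a semiconjugacy that is a bijection off a small set and intertwines the holonomy pseudogroup of $\cL$ with the shift. Totally invariant probability measures of $\cL$ then arise by coupling a shift-invariant probability measure on $\Sigma$ with the leafwise Riemannian area; since an irreducible subshift of finite type carries an infinite family of ergodic, fully supported, pairwise mutually singular invariant measures (the equilibrium states of a family of pairwise non-cohomologous H\"older potentials), pushing these forward gives infinitely many ergodic, fully supported, pairwise mutually singular totally invariant measures on $\overline{\mathcal{S}_k(M)}$. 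For a general negatively curved $h$ deformable to $h_0$ one transports the coding along the deformation path by iterating Labourie's local stability \cite{LabourieInvent}; using Smith's global stability homeomorphism \cite{Smith_asymp} it transports directly, which is why the deformation hypothesis may be dropped.

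The genuinely hard content is the replacement of the Anosov package by its geometric surrogate. For~(1)--(2), this means extracting topological transitivity and a usable closing lemma purely from the maximum principles and the compactness theorem, with no smooth hyperbolic splitting to lean on; for~(3), it is the construction of a Markov partition for a two-dimensional lamination whose transversal is an infinite-dimensional, Cantor-like space --- the technical heart of \cite{LabourieAnnals}. Everything else --- the parametrization of the phase space by ideal-boundary data, the Baire argument, and the formalism of equilibrium states and their pushforwards --- is soft.
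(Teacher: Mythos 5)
The theorem you are asked to justify is stated in the paper as a \emph{survey citation} of Labourie's work (\cite{LabourieInvent,LabourieAnnals}); the paper offers no proof of its own, only the remark that ``this results are nicely presented in \cite{Labourie_phase_space}.'' So there is no internal proof to compare against, and your proposal has to be judged on its own terms.

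The Baire step in~(1) is correct (saturations of a countable basis are open and dense once transitivity is known, and their intersection is the set of points with dense leaf), but the preceding step is not adequately justified. You invoke ``chain transitivity'' and a ``closing step'' in deliberate analogy with Anosov flows, yet the analogy breaks down precisely where those tools get their power: the transversal to $\cL$ is an infinite-dimensional space of Jordan curves, and there is no hyperbolic splitting, no uniform expansion or contraction rate, hence no shadowing lemma of the usual kind. Labourie's compactness theorem gives precompactness of sequences of $k$-surfaces, which is nothing like a shadowing statement; it does not say that a long pseudo-concatenation of plaques lies near a genuine leaf. Likewise, for~(2), the assertion that the return map on ideal-boundary data ``is a contraction because of the monotonicity of $c\mapsto K_c$'' is unsupported: the global maximum principle (Theorem~\ref{th.global_max}) is an \emph{order-preserving} statement with respect to $\Ext$-inclusion, not a metric estimate, and no metric on the space of Jordan curves is proposed in which it becomes a Lipschitz contraction with constant below one. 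Your alternative via Kahn--Markovi\'c fails for a different and more structural reason: $\Pi\leq\PSLc$ acts by M\"obius maps, so $\Pi$-translates of a $(1+\eps)$-quasicircle remain $(1+\eps)$-quasicircles. The closed leaves you obtain this way therefore have boundary data confined to $\QC^+(1+\eps)$, and they cannot $C^\infty_\loc$-approximate a pointed $k$-surface whose ideal boundary is a genuinely wild Jordan curve. Since $\overline{\mathcal{S}_k(M)}$ is the full infinite-dimensional phase space of \emph{all} pointed immersed $k$-surfaces, this route cannot yield density in the sense claimed. (It is also anachronistic relative to the cited reference~\cite{LabourieInvent} from 2000.)

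For~(3), the shape of the argument --- code, then push forward a rich family of shift-invariant measures --- is the right shape, but ``subshift of finite type'' misrepresents the object. Because the transversal is infinite-dimensional and there is no Anosov structure, one cannot produce a finite Markov partition and a semiconjugacy to a finite-alphabet SFT; that is exactly what makes \cite{LabourieAnnals} hard, and your own last paragraph concedes it. What Labourie actually builds is a far more elaborate coding adapted to the laminated geometry (the ``coding procedure'' of the statement), and the passage from shift-invariant measures to totally $\cL$-invariant measures that are simultaneously ergodic, mutually singular, and of full support is not a soft push-forward: full support and mutual singularity can both be destroyed by a non-injective semiconjugacy, and controlling the ``small set'' on which injectivity fails against \emph{every} member of an infinite family of measures is part of the work. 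In short, the proposal reproduces the standard Anosov/thermodynamic toolbox by name, but the statements being proved live in a setting where that toolbox does not directly apply, and the substitutes (maximum principles, compactness) are not shown to deliver the same conclusions.
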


This results are nicely presented in \cite{Labourie_phase_space}. For the purposes of \cite{ALS}, two successive refinements of Labourie’s laminated structure are considered, restricting to progressively smaller subsets of $\overline{\mathcal{S}_k(X)}$. We follow the presentation of \cite{ALS2}.

\subsubsection{Fibered Plateau problems and action of $\PSL$}\label{sss.fibered} We first consider the space of marked $k$-discs in $X$ spanned by a \emph{quasicircle}. 

Recall that given $C>1$ a $C$-quasicircle of $\C\PP^1$ is a Jordan curve $\Lambda$ that is the image of the real projective line $\Lambda=h(\R\PP^1)$ by a $C$-quasiconformal homeomorphism $h:\C\PP^1\to\C\PP^1$. We refer to Ahlfors' book \cite{Ahlfors} for definitions and properties of quasiconformal mappings. The relevant properties for us are
\begin{enumerate}
\item the image of a $C$-quasicircle by a Möbius transformation is still a $C$-quasicircle;
\item $1$-quasiconformal maps are Möbius transformations and $1$-quasicircles are round circles of $\C\PP^1$;
\item a subset of $C$-quasicircles, closed for the Hausdorff topology, is compact if and only if it does not accumumlate on a singleton.
\end{enumerate}

Given the boundary correspondence $\alpha:\C\PP^1\to\bord X$, we will say that
a $C$-quasicircle (resp. a round circle) of $\bord X$ is the image by $\alpha$ of a $C$-quasicircle (resp. a round circle) of $\C\PP^1$. We let $\text{QC}^+(C)$ denote the space of oriented $C$-quasicircles. Note that by equivariance of the boundary correspondence, $\Pi$ acts on the spaces $\text{QC}^+(C)$ for all $C$.

We now introduce various spaces, which we view as restrictions of Labourie's phase space.

\begin{itemize}
\item The space $\FKD_{k,h}(C)$, formed of all marked frame bundles $(FD,\xi)$, where $D$ is an oriented $k$-disk spanned by a $C$-quasicircle and $\xi\in FD$. We furnish this space with the $C^\infty_\loc$-topology.
\item The space $\MKD_{k,h}(C)$, formed of all marked Gauss lifts $(\hat D,n)$, where $D$ is an oriented $k$-disk spanned by a $C$-quasicircle and $n\in\hat D$. We likewise furnish this space with the $C^\infty_\loc$-topology.
\end{itemize}
$\Pi$ acts on these spaces cocompactly on the left (see \cite{ALS,LabourieGAFA}), and the canonical projection $p:\FKD_{k,h}(C)\to\MKD_{k,h}(C)$ is continuous, surjective, and $\Pi$-equivariant.

We have natural boundary maps which are continuous (see for example Lemma \ref{lem_cont_bdr})
\begin{align*}
&\bord:\FKD_{k,h}(C)\to\QC^+(C);(FD,\xi)\mapsto\bord D\ \text{and}\\
&\bord:\MKD_{k,h}(C)\to\QC^+(C);(\hat{D},n)\mapsto\bord D\ .
\end{align*}

A $k$-disc of $X$ has negative intrinsic curvature so it is uniformized by the upper half plane $\H2$. Therefore, the space $\text{MKD}_{k,h}(C)$ has the structure of a hyperbolic Riemann surface lamination (the leaves being preimages of $\bord$),  which descends to a compact hyperbolic Riemann surface lamination with total space $\text{MKD}_{k,h}(C)/\Pi$: see \cite{LabourieGAFA}. These laminations will be denoted by $\cL_{k,h}$ (both on $\MKD_{k,h}(C)$ and on the quotient). The sphere bundle of this lamination (i.e. the set of unit vectors tangent to leaves) is identified to $\FKD_{k,c}(C)$. We let $F\cL_{k,h}$ denote the natural lamination (whose leaves are preimages of $\bord$ in $\FKD_{k,h}(C)$).

Now, given a frame $\xi=(n(x),e_1,e_2)\in FD$ based at $x\in  D$, there exists a \emph{unique} uniformization map $u_\xi:\H2\to D$ such that 
$$u_\xi(i)=x,\,\,\,\,\text{and}\,\,\,\,Du_\xi(e_1^0)\propto e_1$$
where $i=\sqrt{-1}\in\H2$ and $(e_1^0,e_2^0)$ denotes the canonical basis of the plane.

We can define a \emph{right action} of $\PSL$ on $\FKD_{k,h}(C)$ as follows. Let $(FD,\xi)\in\FKD_{k,h}(C)$ and $g\in\PSL$. Write $\xi=(x,n(x)e_1,e_2)$. Consider the, uniformization map $u_\xi:\H2\to D$ defined in the previous paragraph. Then we define $(FD,\xi).g=(FD,\xi')$ where
$$\xi'=\left(u_\xi\circ g(i),n(u_{\xi}\circ g(i)),D(u_{\xi}\circ g)e_1^0,D(u_{\xi}\circ g)e_2^0\right).$$
This action, denoted by $\alpha_k:\PSL\acts\FKD_{k,h}(C)$, is free and transitive on fibers of $\bord$, and obviously commutes with the left and cocompact $\Pi$-action on $\FKD_{k,h}(C)$. The continuity of this action comes from Candel's simultaneous uniformization theorem \cite{Candel} (more precisely, see the version proven in \cite[Theorem 2.7]{AlvarezSmith}).

Using this action of $\PSL$ on $\FKD_{k,h}(C)$, in \cite[Theorems 3.1.2. and 3.2.2]{ALS} we proved the following result.

\begin{theorem}[Fibered Plateau problem]\label{t.fibered_PP}
For all $C\geq 1$, the map
\begin{equation*}
\bord :\FKD_{k,h}(C)\to\mathrm{QC}^+(C)\ ,
\end{equation*}
is a continuous principal $\PSL$-bundle over $\QC^+(C)$, and the map
\begin{equation*}
\bord :\mathrm{MKD}_{k,h}(C)\to\mathrm{QC}^+(C)\ ,
\end{equation*}
is a continuous disk-bundle, which is an associated bundle of $\FKD_{k,h}(C)$.
\end{theorem}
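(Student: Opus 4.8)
The plan is to verify directly that the map $\bord\colon\FKD_{k,h}(C)\to\QC^+(C)$ satisfies the axioms of a continuous principal $\PSL$-bundle, and then obtain the statement for $\MKD_{k,h}(C)$ as the associated disk-bundle with fiber $\H2$ under the standard action of $\PSL$. The ingredients are already assembled in the excerpt: existence and uniqueness of the $k$-disc $D_c$ spanned by an oriented Jordan curve (Theorem~\ref{th_as_plateau_pb}), continuity of $c\mapsto D_c$ in the $C^\infty_\loc$ sense (Theorem~\ref{th.cont.Plateau}), the free and fiberwise-transitive right action $\alpha_k\colon\PSL\acts\FKD_{k,h}(C)$ constructed via uniformization maps, and the continuity of that action coming from Candel's simultaneous uniformization theorem in the form of \cite[Theorem~2.7]{AlvarezSmith}.

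First I would check surjectivity and the fiber description of $\bord$ on $\FKD_{k,h}(C)$. Given $\Lambda\in\QC^+(C)$, Theorem~\ref{th_as_plateau_pb} produces a unique oriented $k$-disc $D=D_\Lambda$ with $\bord D=\Lambda$, and its frame bundle $FD$ is nonempty, so $\bord$ is surjective; uniqueness of $D_\Lambda$ shows that $\bord^{-1}(\Lambda)=FD_\Lambda$, a principal $\PSO(2)$-bundle's total space which, via a choice of uniformization $u_\xi\colon\H2\to D_\Lambda$, is identified with $\PSL$ (the frame bundle of $\H2$), and under this identification the $\alpha_k$-action is the right multiplication of $\PSL$ on itself. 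This already gives that $\alpha_k$ is free and transitive on each fiber, so set-theoretically $\bord$ is a principal $\PSL$-bundle. Next I would produce local trivializations: over a Hausdorff-open set $U\subset\QC^+(C)$ small enough, pick a continuous section $s\colon U\to\FKD_{k,h}(C)$ of $\bord$ — concretely, send $\Lambda$ to a frame at a canonically chosen point of $D_\Lambda$ (e.g.\ the point realizing some normalization relative to three marked points on $\Lambda$, or the barycenter-type construction), its continuity being exactly Theorem~\ref{th.cont.Plateau}. Then $U\times\PSL\to\bord^{-1}(U)$, $(\Lambda,g)\mapsto s(\Lambda)\cdot g$, is a continuous bijection; continuity of its inverse is where Candel's theorem enters, since recovering $g$ from a frame in $FD_\Lambda$ amounts to comparing two uniformization maps of the same disc, which depends continuously on the disc by \cite[Theorem~2.7]{AlvarezSmith}. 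This establishes the first assertion.

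For the second assertion, recall $\MKD_{k,h}(C)$ is the space of marked Gauss lifts $(\hat D,n)$, and $p\colon\FKD_{k,h}(C)\to\MKD_{k,h}(C)$ is the $\Pi$-equivariant projection forgetting the frame beyond the normal vector; the fiber of $p$ over $(\hat D,n)$ is a $\PSO(2)$-orbit. Identifying $FD_\Lambda\cong\PSL$ as above, the projection $p$ over a fixed $\Lambda$ becomes $\PSL\to\PSL/\PSO(2)\cong\H2$, so $\bord\colon\MKD_{k,h}(C)\to\QC^+(C)$ is precisely the bundle associated to the principal $\PSL$-bundle $\FKD_{k,h}(C)$ via the standard transitive action $\PSL\acts\H2$; its fibers are the $k$-discs themselves, uniformized by $\H2$ since they have negative intrinsic curvature by \eqref{eq.Gauss_eq_ksurf}. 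The local trivializations of $\FKD_{k,h}(C)$ descend, using the same section $s$, to local trivializations $U\times\H2\to\bord^{-1}(U)\subset\MKD_{k,h}(C)$, which are homeomorphisms for the $C^\infty_\loc$-topology again by Theorem~\ref{th.cont.Plateau} together with the continuity of Candel's uniformization. This gives the continuous $\H2$-bundle structure and its identification as an associated bundle.

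The main obstacle is the continuity of the local trivialization's \emph{inverse}, i.e.\ the statement that the identification $FD_\Lambda\cong\PSL$ varies continuously with $\Lambda$ in the Hausdorff topology on $\QC^+(C)$ versus the $C^\infty_\loc$-topology on discs. Existence of the disc and of \emph{some} uniformization is classical, but that the uniformization map can be chosen to depend continuously on the disc — and hence that passing from a frame to the corresponding element of $\PSL$ is continuous — is exactly the content that requires Candel's simultaneous uniformization theorem and its effective version in \cite{AlvarezSmith}; the compactness property (3) of quasicircles (a Hausdorff-closed family of $C$-quasicircles not accumulating on a point is compact) is what guarantees the relevant families of discs stay in the good regime of Theorem~\ref{th.cont_Laboubou} so that no degeneration to a point or a geodesic occurs over $\QC^+(C)$. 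Everything else — surjectivity, freeness, fiberwise transitivity, the associated-bundle identification — is then bookkeeping.
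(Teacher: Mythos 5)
Your proposal is correct in its overall structure and uses exactly the ingredients the paper assembles before stating the theorem: uniqueness of the Plateau solution (Theorem~\ref{th_as_plateau_pb}), continuity of $c\mapsto D_c$ in $C^\infty_\loc$ (Theorem~\ref{th.cont.Plateau}), the free fiberwise-transitive right $\PSL$-action $\alpha_k$ defined via uniformization, Candel's simultaneous uniformization for continuity of $\alpha_k$, and the compactness property of $\QC^+(C)$ to keep Plateau solutions away from the degenerate cases of Theorem~\ref{th.cont_Laboubou}. The paper itself defers the proof to \cite[Theorems 3.1.2 and 3.2.2]{ALS}, but your decomposition (surjectivity and fiber identification, local triviality, then the associated-bundle description of $\MKD_{k,h}(C)$) is the natural one given this setup and correctly identifies where each tool is needed.

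The one place where your sketch is genuinely fragile is the construction of a continuous local section $s\colon U\to\FKD_{k,h}(C)$. You suggest normalizing by ``three marked points on $\Lambda$'' or a ``barycenter-type construction,'' but neither is immediately well-defined: three ideal points on a quasicircle do not canonically select a point of the spanned $k$-disc without further choices, and a barycenter of what measure is unclear. The safe local construction is more pedestrian: fix a reference frame $\xi_0=(x_0,v_0,e_1,e_2)\in FD_{\Lambda_0}$; for $\Lambda$ in a small neighborhood of $\Lambda_0$, Theorem~\ref{th.cont.Plateau} gives $C^\infty_\loc$-convergence of $D_\Lambda$ to $D_{\Lambda_0}$, so the nearest-point projection of $x_0$ to $D_\Lambda$ (well-defined by convexity of $K_\Lambda$) depends continuously on $\Lambda$, as does the frame obtained by projecting and Gram--Schmidt-ing $(v_0,e_1,e_2)$. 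This produces the needed continuous local section without invoking any canonical normalization. Once that is fixed, the rest of your argument --- continuity of the inverse trivialization via Candel, freeness and transitivity giving the principal-bundle structure, and the passage to $\MKD_{k,h}(C)\cong\FKD_{k,h}(C)\times_{\PSL}\H2$ via $p$ --- is sound and routine.
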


\subsubsection{Foliated Plateau problem}\label{sss.FPP}

Our second refinement yields a space of marked $k$-surfaces carrying a natural \emph{foliated structure}: we consider now the case where $C=1$. Let $\cC^+=\text{QC}^+(1)$ denote the set of oriented round circles in $\cC^+$. In this case, the spaces $\FKD_{k,h}(1)$ and $\MKD_{k,h}(1)$ identify respectively with the frame bundle $FX$ and the unit tangent bundle $T^1X$, as the following theorem shows.

\begin{theorem}[Foliated Plateau problem]\label{t.foliated_PP} The Gauss lifts of oriented $k$-disks spanned by round circles of $\cC^+$ form a smooth foliation $\cF_{k,h}$ of $T^1X$, and their frames form a smooth foliation $F\cF_{k,h}$ of $FX$. Furthermore, these foliations are $\Pi$-invariant.
\end{theorem}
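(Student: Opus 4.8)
The plan is to produce, for every oriented round circle $c \in \cC^+$, the unique $k$-disc $D_c$ it spans (Theorem \ref{th_as_plateau_pb}), take its Gauss lift $\hat D_c \dans T^1X$, and show that as $c$ ranges over $\cC^+$ these Gauss lifts partition $T^1X$ and fit together smoothly. The identification $\MKD_{k,h}(1)\simeq T^1X$ will be the heart of the matter: I first want to construct a map $T^1X \to \cC^+$ whose fibers are precisely the $\hat D_c$. Given $v \in T^1X$, consider the totally geodesic plane $P_v$ orthogonal to $v$ in the hyperbolic case, or more precisely its ideal boundary, which is a round circle; for the variable-curvature metric $h$ one uses instead the boundary correspondence $\alpha:\C\PP^1\to\bord X$ to transport round circles. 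The key point, which is exactly what the fibered Plateau problem (Theorem \ref{t.fibered_PP}) delivers at $C=1$, is that $\bord : \MKD_{k,h}(1)\to\QC^+(1)=\cC^+$ is a continuous disc-bundle; so the leaves (preimages of $\bord$) are embedded discs depending continuously on the base point, and the total space $\MKD_{k,h}(1)$ is a manifold. What remains is to check that this total space \emph{is} $T^1X$, i.e. that every unit vector $v\in T^1X$ arises as a point $n\in\hat D$ of the Gauss lift of a unique round-circle $k$-disc.

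Existence (every $v$ lies on some such $\hat D_c$) and uniqueness (at most one) are the two things to establish. For \textbf{uniqueness} I would argue by the strong geometric maximum principle (Theorem \ref{th.strong_max_pcpl}): if $v=(x,n)$ lies on the Gauss lifts of two $k$-discs $D$ and $D'$ spanned by round circles, then $D$ and $D'$ are tangent at $x$ with the same oriented normal, hence one is an exterior tangent to the other near $x$ (after possibly swapping them, using convexity), so $D=D'$, and then their ideal boundaries agree. For \textbf{existence}, fix $v=(x,n)$ and consider the family of round circles $c$ whose associated $k$-disc $D_c$ passes through $x$; I want one whose normal at $x$ is exactly $n$. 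Here the $\PSL$-action $\alpha_k$ on $\FKD_{k,h}(1)\simeq FX$ is the engine: it acts freely and transitively on fibers of $\bord$, and moving the frame around by $\PSL$ sweeps out, over a fixed round circle, all of $FD_c$; combined with varying $c$ over the (transitive) action of $\PSL$ on $\cC^+$ (round circles are a homogeneous space for $\PSL_2(\C)$ acting on $\C\PP^1$, transported by $\alpha$), one gets transitivity of the relevant group action on $FX$, forcing $\FKD_{k,h}(1)=FX$ and hence $\MKD_{k,h}(1)=T^1X$. Concretely: in the hyperbolic model, $v\in T^1\Hyp$ determines the oriented totally geodesic plane through its basepoint orthogonal to it, whose ideal boundary is a round circle $c_v$; the $k$-disc $D_{c_v}$ has a point whose normal agrees with $v$ by the uniqueness/convexity structure in Theorem \ref{th_as_plateau_pb}(2), and one pulls this back through the stability homeomorphism $\Psi$ and $\alpha$ to the general metric.

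\textbf{Smoothness and $\Pi$-invariance.} Once the bundle structure $\bord:T^1X\to\cC^+$ is in place, the foliation $\cF_{k,h}$ has as leaves the fibers of $\bord$, i.e. the Gauss lifts $\hat D_c$. Continuity of the leaves (in $C^\infty_\loc$) is exactly the content of Theorem \ref{th.cont.Plateau} together with the disc-bundle statement of Theorem \ref{t.fibered_PP}; to upgrade ``continuous foliation with smooth leaves'' to ``smooth foliation'' one invokes elliptic regularity for the Monge--Ampère equation governing $k$-discs — the leaves solve an elliptic PDE with parameters, and smooth dependence on the boundary data (here the round circle, a finite-dimensional smooth parameter once $C=1$) gives joint smoothness of the foliation chart; this is where Candel's simultaneous uniformization theorem, cited for the continuity of $\alpha_k$, and the elliptic theory behind Labourie's compactness (Theorem \ref{th.Labourie_compactness}) do the work. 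Transporting everything to $FX$ via the smooth principal $\PSL$-bundle structure gives the foliation $F\cF_{k,h}$. Finally, $\Pi$-invariance is immediate: $\Pi$ acts by isometries, hence sends round circles of $\bord X$ to round circles (by equivariance of $\alpha$, since $\Pi$ acts by Möbius transformations on $\C\PP^1$) and $k$-discs to $k$-discs, hence permutes the leaves $\hat D_c$; by uniqueness in the asymptotic Plateau problem, $\gamma\cdot\hat D_c=\hat D_{\gamma c}$, so the foliation is preserved.

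\textbf{Main obstacle.} I expect the genuine difficulty to be the identification $\MKD_{k,h}(1)=T^1X$ — specifically the \emph{surjectivity}/existence half, showing every unit vector is normal to \emph{some} round-circle $k$-disc. In constant curvature this is essentially classical (orthogonal totally geodesic planes), but for variable $h$ there is no totally geodesic plane to latch onto; one must run a degree-theoretic or homogeneity argument using the $\PSL$-action and the stability homeomorphism $\Psi$, and verify it is onto $T^1X$ rather than onto a proper invariant subset. Controlling this uses that $\QC^+(1)$ is a single $\PSL$-orbit and that $\bord$ is a proper submersion (no escape to a singleton, by property (3) of quasicircles and Theorem \ref{th.cont_Laboubou}), so the fibration has no boundary and covers all of $T^1X$.
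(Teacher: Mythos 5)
There is a genuine gap, and it sits exactly at the point you treat as a one-liner: the disjointness of the Gauss lifts. If $\widehat{D_c}$ and $\widehat{D_{c'}}$ meet at $(x,v)$, the two discs are tangent at $x$ with the same oriented normal, but this does \emph{not} make one an exterior tangent of the other: two convex surfaces tangent with the same normal can perfectly well cross each other (compare the graphs of $x^2+4y^2$ and $4x^2+y^2$ at the origin), and convexity does not give the one-sided contact $f\geq 0$ that the strong maximum principle (Theorem \ref{th.strong_max_pcpl}) requires. In fact Proposition \ref{prop.harmonic_pol} tells you precisely what happens when they are not equal: the difference function behaves like a harmonic polynomial of degree $d\geq 2$, so it \emph{changes sign} and the two discs cross along $2d$ arcs through $x$. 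Ruling this out is the real content of the theorem, and the paper's proof is global and topological: the intersection $\overline{D_c}\cap\overline{D_{c'}}$ is a graph in $\overline{X}$ with valency $\geq 4$ at interior vertices (and $\geq 3$ at ideal ones), Euler's formula for planar graphs forces a closed loop, the loop bounds discs in $\overline{D_c}$ and $\overline{D_{c'}}$ whose union is a sphere, and that sphere must be interior tangent to a leaf of the foliation of $X$ produced by Proposition \ref{th.first_PP} (the first foliated Plateau problem), contradicting the local maximum principle. Your proposal never invokes Proposition \ref{th.first_PP} at all, which is the key auxiliary result; without it, and with the maximum-principle shortcut failing, the uniqueness half of your argument collapses.

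On the existence half, which you correctly flag as incomplete, the paper's route is softer than what you sketch: once disjointness is known, the union of the Gauss lifts $\widehat{D_c}$ is open in $T^1X$ by invariance of domain, closed by Labourie's compactness theorem (Theorem \ref{th.Labourie_compactness}), hence all of $T^1X$ by connectedness. Your homogeneity argument does not work for variable curvature: the Möbius group acts transitively on round circles of $\C\PP^1$, but it does not act by isometries of $(X,h)$, so it induces no action on the space of $k$-discs, and the right $\PSL$-action $\alpha_k$ only moves frames \emph{within} a fixed fiber $FD_c$; neither gives transitivity on $FX$. Appealing to the stability homeomorphism $\Psi$ is also problematic, since that global statement is not available independently of (and in \cite{ALS} comes after) the foliated Plateau problem itself. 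The $\Pi$-invariance and the passage to $FX$ in your write-up are fine.
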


The situation is different for minimal surfaces, as Lowe's work \cite{LoweGAFA} provides examples of a metric $h$ with negative sectional curvature such that the space $T^1X$ is not foliated by lifts of minimal surfaces, that is conjugated to the homogeneous foliation.

The first step for proving Theorem \ref{t.foliated_PP} is the following proposition.

\begin{prop}[First foliated Plateau problem]\label{th.first_PP}
Let $\xi_-,\xi_+$ be two distinct points of $\bord X$. Let $\cF^+$ be a foliation by round circles of $\bord X\setminus\{\xi_-,\xi_+\}$, oriented so that $\xi_-$ belongs to the exterior component of all these circles. Then the family $\{D_c\}_{c\in\cF^+}$ foliates $X$.
\end{prop}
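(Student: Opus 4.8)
The plan is to verify three things about the family $\{D_c\}_{c \in \cF^+}$: that the $D_c$ are pairwise disjoint, that they cover $X$, and that they vary smoothly, so that together they constitute a genuine (smooth) foliation of $X$. The key input is the global geometric maximum principle (Theorem \ref{th.global_max}), which translates the nesting of the quasicircles into nesting of the convex bodies $K_c$, together with the continuity statements of Theorem \ref{th.cont.Plateau} and Theorem \ref{th.cont_Laboubou}.

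First I would set up the order structure. Since $\cF^+$ is a foliation by round circles of $\bord X \setminus \{\xi_-,\xi_+\}$ with $\xi_-$ in every exterior component, the leaf space of $\cF^+$ is an interval (say parametrized by $s \in (0,1)$), and for $s < s'$ the circles $c_s, c_{s'}$ satisfy $\Ext(c_s) \subsetneq \Ext(c_{s'})$ (reindexing if necessary so that the circles shrink toward $\xi_+$ as $s \to 1$ and open up toward $\xi_-$ as $s \to 0$). By Theorem \ref{th.global_max} we get $K_{c_s} \subsetneq K_{c_{s'}}$, and by the strong geometric maximum principle (Theorem \ref{th.strong_max_pcpl}) the boundary $k$-discs $D_{c_s}$ and $D_{c_{s'}}$ are disjoint: if they were tangent at an interior point, one would be an exterior tangent to the other and they would coincide, contradicting $\bord D_{c_s} = c_s \neq c_{s'} = \bord D_{c_{s'}}$. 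So the $D_c$ are pairwise disjoint embedded discs in $X$, nested according to $s$.

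Next I would prove that they cover $X$. Fix $x \in X$. Consider the set $I_x = \{ s : x \in K_{c_s} \}$. Using the nesting $K_{c_s} \subset K_{c_{s'}}$ for $s < s'$, this is an up-set, i.e. an interval $(\sigma(x), 1)$ or $[\sigma(x),1)$. It is nonempty for $s$ close to $1$: as $s \to 1$, $c_s$ shrinks toward $\xi_+$, so by Theorem \ref{th.cont.Plateau} and Theorem \ref{th.cont_Laboubou} the convex bodies $K_{c_s}$ converge (in the Hausdorff topology on $\overline X$) either to the point $\xi_+ \in \bord X$ or... — here one checks that the only possibility is the point $\xi_+$, since the circles degenerate to $\xi_+$; and conversely as $s \to 0$ the $K_{c_s}$ exhaust the complement so $x \notin K_{c_s}$ eventually. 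Hence $\sigma(x) \in (0,1)$ is well defined, and by continuity of $s \mapsto K_{c_s}$ one has $x \in K_{c_{\sigma(x)}}$ but $x \notin \mathrm{Int}(K_{c_s})$ for any $s < \sigma(x)$; combined with the fact that $x \in \mathrm{Int}(K_{c_s})$ for $s > \sigma(x)$ (strict nesting of the open convex bodies), this forces $x \in \partial K_{c_{\sigma(x)}} = D_{c_{\sigma(x)}}$. Thus every point of $X$ lies on exactly one $D_c$.

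Finally, smoothness: the map $(s,x) \mapsto$ (the point of $D_{c_s}$) is smooth because $c \mapsto D_c$ is continuous in the $C^\infty_\loc$ sense (Theorem \ref{th.cont.Plateau}) and the $D_{c_s}$ are uniformly transverse to the ``radial'' direction — more precisely, for each $s$ and each $\xi \in \mathrm{Int}(c_s)$, item (2) of Theorem \ref{th_as_plateau_pb} gives a geodesic ray normal to $D_{c_s}$ landing at $\xi$; letting $c_s$ vary, these normal rays sweep out a neighbourhood transversally, giving local product charts for the family $\{D_c\}$. Putting these together, $\{D_c\}_{c \in \cF^+}$ is a smooth foliation of $X$. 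The main obstacle I expect is the covering step, specifically ruling out that the nested family $K_{c_s}$ ``jumps'' or that $\sigma(x)$ fails to be attained — this is where one must invoke the trichotomy of Theorem \ref{th.cont_Laboubou} to guarantee that limits of the $K_{c_s}$ are again $k$-discs (and not, say, tubes over geodesics) as long as we stay away from the degenerate endpoints $s \to 0, 1$, and where one uses the behaviour at the endpoints to pin down that the family genuinely exhausts $X$.
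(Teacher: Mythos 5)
Your overall strategy coincides with the paper's: nesting of the convex bodies $K_c$ via the global maximum principle (Theorem \ref{th.global_max}), disjointness of the discs via the strong maximum principle (Theorem \ref{th.strong_max_pcpl}), and a covering argument that runs along the one-parameter family and locates a ``first'' leaf through a given point. Your interval $I_x$ and infimum $\sigma(x)$ is a pointwise variant of the paper's argument, which uses a closed ball $B$ and a boundary point of the interval $\{c:B\subset K_c\}$, and that part can be made to work.

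There is, however, a genuine gap exactly where the covering step is delicate, namely at the two degenerate ends of the family. With your conventions ($\Ext(c_s)$ increasing in $s$, circles shrinking to $\xi_+$ as $s\to 1$), Theorem \ref{th.global_max} makes $s\mapsto K_{c_s}$ increasing, so as $s\to 1$ the bodies $K_{c_s}$ certainly do not converge to the point $\xi_+$: they grow, and by convexity $\bigcup_s\overline{K_{c_s}}=\overline{X}\setminus\{\xi_+\}$; this (easy) end is what gives $I_x\neq\emptyset$, and your stated justification is the wrong way around. The genuinely delicate end is $s\to 0$: you need $x\notin K_{c_s}$ for small $s$, i.e. $\bigcap_s\overline{K_{c_s}}=\{\xi_-\}$, and your phrase ``the $K_{c_s}$ exhaust the complement'' asserts this without proof. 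A priori the nested decreasing bodies could Hausdorff-converge to a convex body with nonempty interior in $X$, bounded by a $k$-surface whose ideal boundary has collapsed to the single point $\xi_-$ --- a pseudo-horospheric $k$-surface --- and every point inside it would lie on no $D_c$. Excluding this is precisely the content of Corollary \ref{coro.plateau_no_sol}(2), which the paper invokes at this point and which appears nowhere in your proposal; Theorems \ref{th.cont.Plateau} and \ref{th.cont_Laboubou} alone do not rule it out, since alternative (1) of Theorem \ref{th.cont_Laboubou} allows the limit boundary to be again a $k$-surface. Two smaller points: to conclude $x\in\partial K_{c_{\sigma(x)}}$ you should argue from below (the increasing union $\bigcup_{s<\sigma(x)}K_{c_s}$ is convex and dense in $K_{c_{\sigma(x)}}$, hence contains its interior), rather than from $x\in\mathrm{Int}(K_{c_s})$ for $s>\sigma(x)$; and the $C^\infty_{\loc}$-continuity of $c\mapsto D_c$ only yields a continuous foliation with smooth leaves, not the smooth transverse product structure you claim.
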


\begin{proof}
Let us sketch the proof. For $c\in\cF^+$, consider the convex set $K_c$ that is bounded by $D_c$, so $\xi^-\in\bord K_c$ (see Theorem \ref{th_as_plateau_pb}). By the global geometric maximum principle (Theorem \ref{th.global_max}), these convex sets are nested. The strong geometric maximum principle (see Theorem \ref{th.strong_max_pcpl}) shows that the discs $D_c$ are disjoint.

We show that the family $\{D_c\}_{c\in\cF^+}$ covers the whole of $X$. Suppose there exists a closed ball $B$ disjoint from all discs $D_c$. Denote $I=\{c\in\cF^+;B\dans K_c\}$. This is a non-empty interval (oriented from $\xi^+$ to $\xi^-$), with non-empty complement. Indeed, by convexity $\bigcup_{c\in\cF^+}\overline{K_c}=\overline{X}\setminus\{\xi^+\}$, and by non-existence of pseudo-horospheric $k$-surfaces, $\bigcap_{c\in\cF^+} \overline{K_c}=\{\xi^-\}$.  The affirmation now follows from the fact that the family $K_c$ is nested and by continuity of $c\mapsto D_c$.

Note now that, for all $c$ in the closure of the complement of $I$, the interior of $B$ is contained in the complement of $K_c$. It follows that if $c$ is a boundary point of $I$, then $D_c$ meets $B$ non-trivially, which is absurd.
\end{proof}

\begin{proof}[Proof of Theorem \ref{t.foliated_PP}] Let us sketch the proof that if $c,c'$ are distinct oriented round circles of $\bord X$, then the Gauss lifts $\widehat{D_c}$ and $\widehat{D_{c'}}$ are disjoint. Recall Proposition \ref{prop.harmonic_pol}. If the Gauss lifts $\widehat{D_c}$ and $\widehat{D_{c'}}$ intersect at $(x,v)\in T^1X $ then, in a neighbourhood of $x$, the surface $D_{c'}$ looks like the graph of a harmonic polynomial of degree $d \geq 2$. Hence the intersection locus (as the zero set of this polynomial) is the union of $2d$ smooth segments that intersect at $x$. This implies that $\overline{D_c}\cap \overline{D_{c'}}$ is a graph embedded in $\overline{X}$ whose vertices have valency $\geq 4$ (if they belong to $X$) or $3$ (if they belong to $\bord X$). Euler's formula for planar graphs shows that such a graph must have a closed loop, which bounds two discs $\Omega'\dans \overline{D_{c'}}$ and $\Omega'\dans \overline{D_{c'}}$, whose union is a sphere $S$. Recall that by Theorem \ref{th.first_PP}, $D_c$ can be seen as the leaf of a foliation of $X$ by $k$-surfaces so the sphere $S$ must be interior tangent to some leaf of this foliation, which contradicts the geometric maximum principle.

The preceding argument, together with the invariance of the domain, proves that the set $\{\widehat{D_c};c,\,\text{oriented round circle}\}$ is open in $T^1X$. By the compactness theorem, this set is also closed. It therefore covers the whole of $T^1X$, yielding the desired foliation.
\end{proof}

\subsubsection{The homogeneous case}
Assume that $X=\Hyp$. In that case the foliation described in Theorem \ref{t.foliated_PP} is smoothly conjugated to a nice homogeneous model.

Let us first consider the frame bundle of $\Hyp$
$$F\Hyp=\{(x,v,e_1,e_2);x\in\Hyp,\,(v,e_1,e_2),\text{oriented orthonormal basis of } T_x\Hyp\}.$$

Note first that the natural action of $\Isom^+(\Hyp)\simeq\PSLc$ on the frame bundle $F\Hyp$ is free and transitive, so that, up to a choice of base-frame, $F\Hyp\simeq\PSLc$. Right-multiplication then defines a natural, free, right-action of $\PSL$ on $F\Hyp$, which we denote by $\alpha_0$. The orbit under this action of any point $\xi:=(v,e_1,e_2)$ is then simply the frame bundle $FP$ of the oriented totally geodesic plane $P$ passing through $p$, with outward-pointing normal $v$. Alternatively, we may identify $F\Hyp$ with the set of uniformizing maps of oriented totally geodesic planes of $\Hyp$.

This right action $\alpha_0$ is smoothly conjugated to the right action $\alpha_k$. Indeed let $FP$ be the frame bundle of an oriented totally geodesic plane $P$. Write $k=\tanh^2(R)$ and consider the time $R$ map of the \emph{frame flow} $\hat G_R:F_{h_0}\tilde X\to F_{h_0}\tilde X,(p,v,e_1,e_2)\mapsto(G_R(p,v),e_1',e_2')$, where $G_R$ denotes the time $R$ map of the geodesic flow and $(e_1',e_2')$ denote the parallel transport of $(e_1,e_2)$ along this flow. The image of the frame bundle $FP$ of the totally geodesic plane $P$ is simply the frame bundle $FD$ of the $k$-disk $D$ spanned by $\bord P$, and $\hat G_R$ is simply a uniformizing map for $D$. It follows that $\hat G_R$ conjugates $\alpha_0$ and $\alpha_k$. In summary, we have the following identification.

\begin{theorem}[The homogeneous model]\label{t.homogeneous_model}
The right action $\alpha_k$ of $\PSL$ on $F\Hyp$ is smoothly conjugated to the homogeneous action of $\PSL$ on $\PSLc$ by multiplication on the right.
\end{theorem}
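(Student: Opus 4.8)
The plan is to produce an explicit smooth conjugacy, namely the time-$R$ \emph{frame flow} $\hat G_R\colon F\Hyp\to F\Hyp$ announced just before the statement, where $R$ is defined by $k=\tanh^2 R$. Recall that $\hat G_R$ sends a frame $(p,v,e_1,e_2)$ to $\big(G_R(p,v),v',e_1',e_2'\big)$, the vectors being parallel-transported along the geodesic directed by $v$; it is a diffeomorphism of $F\Hyp$. Everything rests on two elementary geometric facts about equidistant surfaces: (i) the equidistant surface at distance $R$ on the normal side of an oriented totally geodesic plane $P$ is exactly the $k$-disc $D_{\bord P}$ spanned by the round circle $\bord P$, with matching orientation; and (ii) the normal projection from $P$ to that surface is a homothety.

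To see (i) and (ii), fix an oriented totally geodesic plane $P\dans\Hyp$ with unit normal field $\nu$, and let $D=D_R(P)$ be the equidistant at distance $R$ on the $\nu$-side. In Fermi coordinates the hyperbolic metric reads $\cosh^2 t\,g_P+dt^2$, so the shape operator of $D$ with respect to the normal $\nu_R$ obtained by parallel transport of $\nu$ is $\tanh R\cdot\Id$; hence $\kappa_{ext}\equiv\tanh^2 R=k$ and $\bord D=\bord P$. By uniqueness in Theorem \ref{th_as_plateau_pb}, $D$ is the $k$-disc $D_{\bord P}$, and the convex set it bounds is the Fermi half-space $\{t\le R\}$, so the orientation of $D$ as $\partial K_{\bord P}$ is precisely the one defined by $\nu_R$. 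The same metric expression shows the normal projection $\iota_R\colon P\to D$ is a homothety of ratio $\cosh R$ from $(P,g_P)$ onto the induced metric on $D$, and that its differential along $TP$ equals $\cosh R$ times parallel transport, since the tangential Jacobi fields along the normal geodesics are $\cosh t$ times parallel fields.

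Now invoke Theorem \ref{t.foliated_PP} to view $\FKD_{k,h_0}(1)$ as $F\Hyp$, foliated by the frame bundles $FD$ of the $k$-discs spanned by round circles; the $\alpha_0$-orbit of a frame $\xi$ is the frame bundle $FP$ of the totally geodesic plane $P$ it determines. A point $\xi\in FP$ is the same datum as the orientation-preserving \emph{isometric} uniformization $u^P_\xi\colon\H2\to P$ normalized by $u^P_\xi(i)=p(\xi)$ and $Du^P_\xi(e_1^0)\propto e_1(\xi)$, and $\alpha_0$ acts by precomposing $u^P_\xi$ with $\PSL=\Isom^+(\H2)$. By (i) and (ii) the map $\iota_R\circ u^P_\xi\colon\H2\to D_{\bord P}$ is \emph{conformal}, it sends $i$ to $G_R(p(\xi),v(\xi))$, and its differential at $i$ is proportional to $e_1\big(\hat G_R(\xi)\big)$; that is, $\iota_R\circ u^P_\xi$ is exactly the uniformizing map attached to the frame $\hat G_R(\xi)\in FD_{\bord P}$ in the definition of $\alpha_k$. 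Thus $\hat G_R$ carries $FP$ onto $FD_{\bord P}$ and acts there by postcomposition with $\iota_R$, while both $\alpha_0$ and $\alpha_k$ act by precomposition with $\PSL$. Postcomposition and precomposition commute, so $\hat G_R(\xi\cdot_{\alpha_0}g)=\hat G_R(\xi)\cdot_{\alpha_k}g$ for all $g\in\PSL$. Since $\hat G_R$ is a diffeomorphism of $F\Hyp$ carrying the homogeneous foliation (the $\alpha_0$-orbits) onto $F\cF_{k,h_0}$ (the $\alpha_k$-orbits) and intertwining the two actions, it is the asserted smooth conjugacy; composing with $F\Hyp\simeq\PSLc$ then turns $\alpha_0$ into right multiplication by $\PSL$.

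The single point requiring genuine care is the geometric dictionary of the second paragraph: identifying the equidistant surface with $D_{\bord P}$ together with the correct orientation of the curve, of the normal, and of the convex region, and checking that the parallel-transport component of the frame flow realizes the differential of the homothety $\iota_R$ up to the scalar $\cosh R$ — this is exactly what makes $\iota_R\circ u^P_\xi$ the $\alpha_k$-uniformizing map of $\hat G_R(\xi)$ and not merely some conformal uniformization of $D_{\bord P}$. Granting this, the equivariance is the formal fact that postcomposition commutes with precomposition, and smoothness of $\hat G_R$ and of its inverse $\hat G_{-R}$ is automatic, both being time maps of the smooth frame flow.
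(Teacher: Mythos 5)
Your proof is correct and follows the same strategy as the paper, namely conjugating by the time-$R$ frame flow $\hat G_R$ with $k=\tanh^2 R$. The paper asserts in one line that $\hat G_R$ carries $FP$ to $FD_{\bord P}$ and is ``simply a uniformizing map''; you have merely filled in the details that justify this (Fermi-coordinate computation of the shape operator of the equidistant, identification with $D_{\bord P}$ by uniqueness in the Plateau problem, and the fact that $D\iota_R$ is $\cosh R$ times parallel transport so that $\iota_R\circ u^P_\xi$ is the normalized $\alpha_k$-uniformizer of $\hat G_R(\xi)$), and then the equivariance is the formal point that postcomposition by $\iota_R$ commutes with precomposition by $\PSL$.
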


Since $p:F\Hyp\rightarrow T^1\Hyp$ is a principal $\text{SO}(2)$-bundle, $T^1\Hyp$ likewise identifies with the quotient space $\PSLc/\text{SO}(2)$. The images of the orbits of the homogeneous $\PSL$-action under this identification are then the Gauss lifts of totally umbilical planes. It follows that the $k$-surface foliation of $T^1\Hyp$ is likewise smoothly conjugate to the homogeneous foliation $\PSLc/\text{SO}(2)$ by $\PSL$-orbits.

\subsection{Rigidity of foliated Plateau problems} 

We now review the rigidity properties of the object defined above.

\subsubsection{Rigidity of the $k$-surface foliation}\label{sss.rigidity_FPP}

We saw that the unit tangent bundle of a simply connected negatively curved surface can be identified with the set of oriented triples of its ideal boundary. We discuss a three dimensional analogue of this identification.

We let $\text{MD}$ denote the space of closed oriented marked round discs $(D,z)$ of $\C\PP^1$. This is a $5$-dimensional manifold that comes with a natural $2$-dimensional foliation $\ctG_2$ whose leaves are defined by the equivalence relation $(D,z)\sim(D',z')$ whenever $D'=D$.

Now consider the boundary correspondence $\alpha:\C\PP^1\to\bord X$ defined by a negatively curved metric $h$ on $M$. We can define the map $\Phi_{k,h}:\text{MD}\to T^1X$ as follows. Let $(D,z)\in\text{MD}$. Let $c=\alpha(\partial D)$ and $\xi^+=\alpha(z)$. The curve $c$ comes with a natural orientation, so $\xi^+$ belongs to the interior component of $c$. It spans a unique oriented $k$-disc $D_c\dans X$. Consider the unique element $(x,v)$ of its Gauss lift $\widehat{D_c}$ such that $\xi^+$ is the limit point of the geodesic ray directed by $v$. We define $\Phi_{k,h}(D,z)=(x,v)\in T^1X$. Theorem \ref{t.foliated_PP} may be rephrased as follows.

\begin{theorem}[\cite{ALS}, Theorem 2.1.5.]
The map $\Phi_{k,h}:\mathrm{MD}\to T^1X$ is a homeomorphism that conjugates the foliations $\ctG_2$ and $\ctF_{k,h}$, the $k$-surface foliation of $T^1X$.
\end{theorem}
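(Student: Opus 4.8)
The plan is to verify that $\Phi_{k,h}$ is a well-defined bijection, show it is continuous with continuous inverse, and then check leaf-by-leaf that it intertwines $\ctG_2$ and $\ctF_{k,h}$. Most of the work has already been packaged into Theorem \ref{t.foliated_PP} (the $k$-surface foliation exists) and Theorem \ref{th.cont.Plateau} (the Plateau solution depends continuously on its boundary curve); the role of this statement is essentially to reorganize those facts as a statement about the homogeneous model space $\mathrm{MD}$.

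First I would check that $\Phi_{k,h}$ is well-defined. Given $(D,z)\in\mathrm{MD}$, the oriented round circle $c=\alpha(\partial D)$ bounds a unique oriented $k$-disc $D_c\subset X$ by Theorem \ref{th_as_plateau_pb}, and $\xi^+=\alpha(z)$ lies in $\mathrm{Int}(c)$ by the orientation convention. By part (2) of Theorem \ref{th_as_plateau_pb}, there is a geodesic ray normal to $D_c$ landing at $\xi^+$; its foot point, together with the inward unit normal, gives the unique point $(x,v)$ of $\widehat{D_c}$ whose forward endpoint is $\xi^+$ (uniqueness because distinct points of a $k$-disc have distinct normal geodesic endpoints, by convexity). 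So $\Phi_{k,h}$ is a genuine map. For \textbf{injectivity}: if $\Phi_{k,h}(D,z)=\Phi_{k,h}(D',z')=(x,v)$, then $(x,v)$ lies in both $\widehat{D_c}$ and $\widehat{D_{c'}}$, so by Theorem \ref{t.foliated_PP} (the Gauss lifts are pairwise disjoint) we get $\widehat{D_c}=\widehat{D_{c'}}$, hence $D_c=D_{c'}$, hence $c=c'$ by uniqueness in the asymptotic Plateau problem; and then $\xi^+=\exp(\infty v)$ forces $z=z'$. For \textbf{surjectivity}: given $(x,v)\in T^1X$, Theorem \ref{t.foliated_PP} says $(x,v)$ lies on $\widehat{D_c}$ for some oriented round circle $c$ of $\bord X$; set $D=\alpha^{-1}(\mathrm{Int}(c))$ with the induced orientation and $z=\alpha^{-1}(\exp(\infty v))$, which lies in $D$ because the normal ray from a point of a $k$-disc lands in the interior component; then $\Phi_{k,h}(D,z)=(x,v)$.

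Next I would establish \textbf{continuity} of $\Phi_{k,h}$ and of its inverse. Continuity of $\Phi_{k,h}$: the maps $(D,z)\mapsto(\alpha(\partial D),\alpha(z))=(c,\xi^+)$ are continuous (the boundary correspondence $\alpha$ is a homeomorphism), $c\mapsto D_c$ is continuous in the $C^\infty_{\loc}$ sense by Theorem \ref{th.cont.Plateau} so $c\mapsto\widehat{D_c}$ is continuous in $T^1X$, and selecting on $\widehat{D_c}$ the point with prescribed forward endpoint $\xi^+$ is continuous because the endpoint map $\widehat{D_c}\to\mathrm{Int}(c)$ is a homeomorphism depending continuously on $c$ (again from Theorem \ref{th.cont.Plateau} plus continuity of the normal exponential map to $\bord X$). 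For the inverse, I would observe that $\mathrm{MD}$ is locally compact (a $5$-manifold) and $T^1X$ is locally compact, and argue that $\Phi_{k,h}$ is proper — a sequence $(D_n,z_n)$ leaving every compact set has either $\partial D_n$ degenerating to a point of $\bord X$ or $z_n$ approaching $\partial D_n$; in the first case $D_{c_n}$ degenerates by Theorem \ref{th.cont_Laboubou} and the foot point escapes to infinity, in the second the foot point is pushed off to $\bord X$ as well — so $\Phi_{k,h}(D_n,z_n)$ leaves every compact set of $T^1X$. A continuous proper bijection between locally compact Hausdorff spaces is a homeomorphism. Alternatively, since both spaces are $5$-manifolds, invariance of domain gives that $\Phi_{k,h}$ is open, hence a homeomorphism.

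Finally, the \textbf{conjugacy of foliations}. A leaf of $\ctG_2$ through $(D,z)$ is $\{(D,z'):z'\in D\}$; it is sent by $\Phi_{k,h}$ to $\{\,\text{the point of }\widehat{D_c}\text{ with endpoint }\alpha(z') : z'\in D\,\}$, which is exactly all of $\widehat{D_c}$ — i.e.\ the leaf of $\ctF_{k,h}$ through $\Phi_{k,h}(D,z)$, since leaves of $\ctF_{k,h}$ are precisely the Gauss lifts of $k$-discs spanned by round circles (Theorem \ref{t.foliated_PP}). So $\Phi_{k,h}$ maps leaves bijectively to leaves, and being a homeomorphism it maps the foliation to the foliation. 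The main obstacle I anticipate is not any single deep point — all the hard analysis is already in Theorems \ref{th.cont.Plateau}, \ref{th.cont_Laboubou} and \ref{t.foliated_PP} — but rather the bookkeeping needed to show that $\Phi_{k,h}^{-1}$ is continuous, i.e.\ that $k$-discs cannot approach a given one ``in the interior'' without their boundary circles converging; this is where properness must be argued carefully using the degeneration trichotomy of Theorem \ref{th.cont_Laboubou}, and it is cleanest to simply invoke invariance of domain to bypass it.
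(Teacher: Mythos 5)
Your proposal is correct and takes essentially the paper's route: the paper presents this statement as a direct rephrasing of Theorem \ref{t.foliated_PP}, and your argument is exactly that unpacking — bijectivity from the disjointness and covering properties of the $k$-surface foliation, continuity from Theorem \ref{th.cont.Plateau}, inverse continuity via invariance of domain (or properness), and the evident leaf-to-leaf correspondence. The only step stated more casually than it deserves is the injectivity of the horizon map of a $k$-disc onto $\mathrm{Int}(c)$ (your ``by convexity''; one really uses the maximum principle against horospheres, which have extrinsic curvature $\geq 1>k$), but this uniqueness is already presupposed in the paper's own definition of $\Phi_{k,h}$, so it is not a gap relative to the statement as posed.
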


This theorem immediately implies the rigidity of $k$-surface foliations.

\begin{theorem}
If $h_1$ and $h_2$ are two negatively curved metrics on $M$, then the map $\Phi_{k,h_2}\circ\Phi_{k,h_1}^{-1}$ is a homeomorphism of the unit tangent bundles that conjugates the respective $k$-surface foliations.
\end{theorem}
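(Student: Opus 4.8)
The plan is to derive this corollary directly from the previous theorem, which identifies each $\Phi_{k,h_i}:\mathrm{MD}\to T^1X$ (where we abuse notation and write $T^1X$ for the unit tangent bundle computed with respect to $h_i$) as a homeomorphism conjugating the universal foliation $\ctG_2$ of $\mathrm{MD}$ to the $k$-surface foliation $\ctF_{k,h_i}$. Since both maps have the \emph{same} source $\mathrm{MD}$ — this is the crucial point, as the space of closed oriented marked round discs of $\C\PP^1$ depends only on the smooth manifold $M$ and not on the chosen negatively curved metric, the metric entering only through the boundary correspondence $\alpha$ — the composition $\Psi:=\Phi_{k,h_2}\circ\Phi_{k,h_1}^{-1}$ is well defined as a map from $T^1X$ (for $h_1$) to $T^1X$ (for $h_2$). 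As a composition of homeomorphisms it is itself a homeomorphism.

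It remains to check that $\Psi$ conjugates the foliations. First I would observe that $\Phi_{k,h_1}$ conjugates $\ctG_2$ to $\ctF_{k,h_1}$, hence $\Phi_{k,h_1}^{-1}$ conjugates $\ctF_{k,h_1}$ to $\ctG_2$; then $\Phi_{k,h_2}$ conjugates $\ctG_2$ to $\ctF_{k,h_2}$. Composing, $\Psi$ sends leaves of $\ctF_{k,h_1}$ to leaves of $\ctF_{k,h_2}$, i.e. it conjugates the $k$-surface foliation of $T^1X$ associated to $h_1$ to that associated to $h_2$. Concretely, a leaf of $\ctF_{k,h_i}$ is the Gauss lift $\widehat{D_c}$ of the $k$-disc spanned by an oriented round circle $c\dans\bord X$, and under $\Phi_{k,h_i}$ this corresponds to the leaf of $\ctG_2$ consisting of all marked discs $(D,z)$ with fixed underlying round disc $D\dans\C\PP^1$ (with $\alpha_i(\partial D)=c$); so the conjugation is transparent once one tracks through the definitions.

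One should also note the equivariance: all three maps $\Phi_{k,h_1},\Phi_{k,h_2}$ and the identification of $\mathrm{MD}$ intertwine the relevant $\Pi\simeq\pi_1(M)$-actions (the action on $\mathrm{MD}$ via Möbius transformations, and the actions on the two unit tangent bundles via differentials of $h_i$-isometries, all matched through the respective boundary correspondences), so $\Psi$ descends to a homeomorphism of the compact unit tangent bundles $T^1M$ conjugating the quotient $k$-surface foliations. This is the three-dimensional, $k$-surface analogue of the elementary dimension-two argument (and of Gromov's geodesic rigidity, Theorem \ref{th_gromov_rigidity}): the metric-independent combinatorial model $\mathrm{MD}$ plays the role that $\bord^{(3)}X$ played for the geodesic flow of surfaces.

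There is essentially no obstacle here: the entire content has been front-loaded into Theorem \ref{t.foliated_PP} and its reformulation via $\Phi_{k,h}$, whose proof in turn rests on the asymptotic Plateau problem (Theorem \ref{th_as_plateau_pb}), the geometric maximum principles (Theorems \ref{th.global_max} and \ref{th.strong_max_pcpl}), Labourie's compactness theorem, and the first foliated Plateau problem (Proposition \ref{th.first_PP}). If I had to name the one delicate point, it is purely bookkeeping: making sure that the two occurrences of ``$T^1X$'' are correctly interpreted — these are genuinely different Riemannian manifolds (the underlying smooth manifold is the same, but the Sasaki metrics differ), and $\Psi$ is only asserted to be a homeomorphism, not a diffeomorphism or an isometry, consistent with the fact that Gromov-type rigidity for foliated Plateau problems produces topological, not smooth, conjugacies.
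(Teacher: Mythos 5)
Your proposal is correct and is precisely the argument the paper intends: the paper simply states that the rigidity theorem ``immediately implies'' from the identification $\Phi_{k,h}:\mathrm{MD}\to T^1X$ conjugating $\ctG_2$ to $\ctF_{k,h}$, and you have spelled out exactly this composition argument, including the metric-independence of the source $\mathrm{MD}$ and the $\Pi$-equivariance needed to descend to $T^1M$.
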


\subsubsection{Gromov's geodesic rigidity revisited} It is interesting to note that our foliation by $k$-surfaces may be used to give another proof of Gromov's geodesic rigidity theorem that provides a canonical conjugacy. Let us describe a $1$-dimensional foliation of the space MD introduced above. 

Given a marked disc $(D,z)$, we let $z'=R_{\partial D}(z)$ denote the image of $z$ by the inversion of the circle $\partial D$. Let $L_{(D,z)}$ be the one-parameter family of marked discs invariant by the group of hyperbolic Möbius maps fixing $z$ and $z'$. This is a curve in MD passing through $(D,z)$. The set of curves constructed this way forms a foliation $\ctG_1$ of MD parametrized by the set of pairs of points of $\C\PP^1$.

Consider once more the boundary correspondence $\alpha:\C\PP^1\to\bord X$. We can define a map $\Psi_{k,h}:\text{MD}\to T^1X$ as follows. Let $(D,z)\in\text{MD}$ and $z'=R_{\partial D}(z)$. Let $\xi=\alpha(z)$, $\xi'=\alpha(z')$ and $c=\alpha(\partial D)$ endowed with its natural orientation. Let $D_c$ be the $k$-disc spanned by $c$. By convexity the geodesic $(\xi',\xi)$ cuts $D_c$ at a unique point $x$. Let $v$ be the derivative of this geodesic (parametrized from $\xi^-$ to $\xi^+$) at $x$. We can define $\Psi_{k,h}(D,z)=(x,v)$ so in particular, by construction, $\Psi_{k,h}$ maps the curve $L_{(D,z)}$ onto the geodesic $(\xi^-,\xi^+)$.

\begin{theorem}[\cite{ALS}, Theorem 2.1.4.]
The map $\Psi_{k,h}:\mathrm{MD}\to T^1X$ is a homeomorphism that conjugates the foliation $\ctG_1$ and the geodesic foliation $\ctG$ of $T^1X$.
\end{theorem}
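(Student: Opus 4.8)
The plan is to realize $\Psi_{k,h}$ as a leafwise homeomorphism lying over the identification of leaf spaces induced by the boundary correspondence — the three-dimensional counterpart of the elementary surface picture $T^1X\simeq\bord^{(3)}X$ — and then to promote it to a global homeomorphism by a cocompactness argument. Fix $(D,z)\in\mathrm{MD}$ and write $z'=R_{\partial D}(z)$, $\xi=\alpha(z)$, $\xi'=\alpha(z')$. The family $\mathcal F^+=\{\partial D'\}$, obtained as $(D',z')$ runs over the leaf $L_{(D,z)}$, is exactly the foliation of $\C\PP^1\setminus\{z,z'\}$ by round circles invariant under the one–parameter group of hyperbolic Möbius transformations fixing $z$ and $z'$; along it both the marked point $z$ and its mirror $z'$ stay fixed, and $z'$ lies in the exterior component of every circle of $\mathcal F^+$. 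Pushing forward by the homeomorphism $\alpha$ yields a foliation of $\bord X\setminus\{\xi',\xi\}$ by round circles oriented so that $\xi'$ is exterior to all of them, so Proposition~\ref{th.first_PP} applies and the spanned $k$–discs $\{D_c\}_{c\in\alpha(\mathcal F^+)}$ foliate $X$. The first thing I would check, using this foliation together with the convexity of the bodies $K_c$ from Theorem~\ref{th_as_plateau_pb}, is that $\Psi_{k,h}$ is well defined: the geodesic from $\xi'$ to $\xi$ meets $D_c$ transversally at exactly one point. Transversality is immediate from strict convexity of $D_c$ (since $\kappa_{ext}=k>0$); the count being exactly one follows because $\xi'$ lies in the interior of $\bord K_c=\overline{\Ext(c)}$ — so the geodesic ray toward $\xi'$ is eventually inside $K_c$ — whereas $\xi\notin\bord K_c$.

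Next I would establish leafwise bijectivity. Along $L_{(D,z)}$ the pair $(\xi',\xi)$ is constant and $c$ ranges over all of $\alpha(\mathcal F^+)$; since $\{D_c\}$ foliates $X$ and the geodesic $(\xi',\xi)$ runs from a point interior to every $\bord K_c$ to a point exterior to all of them, it crosses each leaf $D_c$ exactly once and sweeps out the whole geodesic. Thus $\Psi_{k,h}$ restricts to a bijection from $L_{(D,z)}$ onto the geodesic $(\xi',\xi)$, and by construction it carries $L_{(D,z)}$ onto this leaf of $\ctG$. The leaf space of $\ctG_1$ is the space of pairs of points of $\C\PP^1$, the leaf space of $\ctG$ is $\bord^{(2)}X$, and $\alpha$ induces a bijection between them; combined with leafwise bijectivity this shows that $\Psi_{k,h}$ is a bijection mapping leaves of $\ctG_1$ bijectively onto leaves of $\ctG$, which is already the asserted conjugacy of foliations.

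There remains the topological part: that $\Psi_{k,h}$ is a homeomorphism. Continuity is a matter of composing continuous operations — $\alpha$ and $\alpha^{-1}$ are homeomorphisms; $c\mapsto(D_c,K_c)$ is continuous by Theorem~\ref{th.cont.Plateau}; and the transverse intersection point of a geodesic with the smooth strictly convex surface $D_c$, together with the unit tangent direction of the geodesic there, depends continuously on $(\xi',\xi,c)$. For the inverse, rather than arguing directly (which one can also do, recovering $\partial D$ as the unique leaf $D_c$ of the foliation $\{D_c\}$ through the given point), I would invoke cocompactness: $\Psi_{k,h}$ is $\Pi$–equivariant (using equivariance of $\alpha$, uniqueness in the asymptotic Plateau problem so that $D_{\gamma c}=\gamma D_c$, and that $\Pi$ acts on $X$ by isometries), and $\mathrm{MD}$ is $\Pi$–equivariantly identified with $\PSLc/\mathrm{SO}(2)\simeq T^1\Hyp$, so that $\mathrm{MD}/\Pi$ is compact. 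The induced map $\mathrm{MD}/\Pi\to T^1M$ is then a continuous bijection between compact Hausdorff spaces, hence a homeomorphism, and therefore so is $\Psi_{k,h}$.

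I expect the main obstacle to be the first step — that the geodesic $(\xi',\xi)$ meets $D_c$ in exactly one point, equivalently that the index of the leaf of $\{D_c\}$ through $\gamma(t)$ is a monotone, proper function of $t$. This is where convexity and nestedness of the $K_c$ (Theorems~\ref{th_as_plateau_pb} and \ref{th.strong_max_pcpl}, and the global maximum principle) must be combined carefully with the fact that $\xi'$ sits in the \emph{interior} of the ideal boundary $\bord K_c$: without this last point a convex body could be entered and exited along the geodesic, producing two crossings. Everything else — leafwise bijectivity, continuity, and the cocompactness upgrade — is routine once Proposition~\ref{th.first_PP} and Theorem~\ref{th.cont.Plateau} are available.
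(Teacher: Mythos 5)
Your overall architecture is sound and consistent with what the paper itself offers: the survey gives no proof of this theorem (it quotes [ALS, Theorem 2.1.4]), and its only indication is the assertion that ``by convexity'' the geodesic $(\xi',\xi)$ cuts $D_c$ at a unique point. Your description of the leaves of $\ctG_1$ as the pencils of circles with limit points $z,z'$, the application of Proposition~\ref{th.first_PP}, the leafwise bijectivity, the continuity via Theorem~\ref{th.cont.Plateau}, and the equivariance-plus-cocompactness upgrade from a continuous bijection to a homeomorphism are all correct and, as you say, routine.

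The genuine gap is precisely the step you flag as the main obstacle but never actually prove: that the geodesic ray asymptotic to $\xi'$ eventually lies in $K_c$. Your justification --- ``because $\xi'$ lies in the interior of $\bord K_c=\overline{\Ext(c)}$'' --- restates the claim rather than proving it: a geodesic can perfectly well converge to an ideal point of the closure of a convex set while staying outside it (this is what happens at points of $c$ itself), so the fact that interior ideal points force eventual entry requires genuine negative-curvature input beyond convexity, nestedness of the $K_{c}$, and Theorems~\ref{th_as_plateau_pb} and~\ref{th.global_max}. One way to close it: choose a compact neighbourhood $B\subset\Ext(c)$ of $\xi'$ in $\bord X$; since $K_c$ is closed and convex in $\overline{X}$ with $B\subset\bord K_c$, one has $\mathrm{Hull}(B)\subset K_c$. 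Now let $\gamma(t)\to\xi'$ and, for each large $t$, consider the complete geodesic through $\gamma(t)$ orthogonal to $\gamma$, with ideal endpoints $\eta_1(t),\eta_2(t)$. Taking $o=\gamma(0)$, the angle at $\gamma(t)$ between $[\gamma(t),o]$ and $[\gamma(t),\eta_i(t))$ equals $\pi/2$, so by comparison ($\sect_h\le -1$) the ray $[o,\eta_i(t))$ passes within uniformly bounded distance of $\gamma(t)$; hence $\eta_i(t)$ lies in the shadow, seen from $o$, of a ball of bounded radius about $\gamma(t)$, and these shadows shrink to $\{\xi'\}$ as $t\to\infty$. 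Thus $\eta_1(t),\eta_2(t)\in B$ for large $t$, so $\gamma(t)\in\mathrm{Hull}(B)\subset K_c$. With this lemma, your argument is complete: $\gamma^{-1}(K_c)$ is a ray $(-\infty,t_2]$ by convexity of $K_c$, the unique boundary crossing at $t_2$ is transverse (strict convexity of $D_c$ rules out a tangency there, since points just before $t_2$ are interior to $K_c$), the geodesic meets every leaf of the foliation of Proposition~\ref{th.first_PP} exactly once, and the remaining steps go through as you wrote them.
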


\section{Equidistribution, asymptotic counting and geometric rigidity}

In Section 2, we discussed the problem of counting closed geodesics in negative curvature. Here, we are interested in the analogous asymptotic counting problem for closed essential surfaces. We first address the problem of the existence of such surfaces.

\subsection{Essential surfaces, topological counting and quasi-Fuchsian groups}\label{ss.essential_QF}

\subsubsection{Quasi-Fuchsian groups} Let $C>1$. A $C$-quasi-Fuchsian group is a discrete and torsion-free subgroup $\Gamma$ of $\PSLc$ isomorphic to a cocompact lattice of $\PSL$ whose limit set $\bord \Gamma$ is a $C$-quasicircle. A $1$-quasi-Fuchsian group is nothing but a Fuchsian surface subgroup of $\PSLc$.

\subsubsection{Existence results}
The existence of surface subgroups in the fundamental group of a hyperbolic $3$-manifold was an open question until the work of Kahn-Markovi\'c (see \cite{KM1}). In this work, the authors show how to construct infinitely many closed surfaces in a closed hyperbolic $3$-manifold $M$ (see also Bergeron's Bourbaki seminar \cite{Bergeron} for a clear exposition). This theorem was generalized by H\"amenstadt \cite{HamenstadtKM} and by Kahn-Labourie-Mozes \cite{Kahn_Labourie_Mozes} (we refer for Kassel's Bourbaki seminar \cite{KasselBourbaki} for a clear exposition). The compact subgroups constructed by Kahn-Markovi\'c are quasi-Fuchsian, and can be made almost Fuchsian.

\begin{theorem}[Kahn-Markovi\'c's existence theorem \cite{KM1}]\label{th.KM}
Let $(M,h_0)$ be a closed hyperbolic $3$-manifold. Then for every $\eps>0$ there exists a hyperbolic surface $S$ and an immersion $\iota:S\to M$ such that
\begin{enumerate}
\item the induced map $\iota_\ast:\pi_1(S)\to\Pi$ is injective and the image is a $(1+\eps)$-quasi-fuchsian group;
\item $\iota$ is $(1+\eps)$-quasi-geodesic, i.e., the image of a complete geodesic of $S$ is a $(1+\eps)$-quasi-geodesic in $M$.
\end{enumerate}
\end{theorem}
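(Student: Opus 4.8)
The plan is to follow Kahn and Markovi\'c \cite{KM1}: build $S$ by gluing together a large finite collection of immersed, \emph{nearly geodesic} pairs of pants in $M$ along their cuffs, in such a way that the gluing itself is nearly geodesic, so that the assembled surface is close to a totally geodesic surface on every scale. Fix a large parameter $R$ (eventually $R\to\infty$, forcing $\eps\to 0$) and a small tolerance. A \emph{good pair of pants} is an immersed pair of pants $P\to M$ homotopic to a geodesic one whose three cuffs are closed geodesics of complex length within an exponentially small error of $R$ (real part $\approx R$, rotation $\approx 0$). Lifting to the orthonormal frame bundle $FM\cong\Pi\backslash\PSLc$, such a pants is encoded by a pair of ``feet'' --- framed points sitting over the cuff geodesics near where the pants is thinnest --- together with the datum that two long geodesic arcs in $FM$ join these feet and close the pants up correctly.

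The first analytic input is a counting and equidistribution statement: the number of good pants is $e^{2R}$ up to subexponential factors, and, crucially, for each closed geodesic $\gamma$ of length $\approx R$ that occurs as a cuff, the feet of good pants having $\gamma$ as a cuff equidistribute in the unit normal bundle $N^1\gamma\cong(\R/\ell(\gamma)\Z)\times(\R/2\pi\Z)$ with an \emph{exponentially small} error. This is the heart of the matter and is obtained from \emph{exponential mixing of the frame flow} on $FM$ --- equivalently a spectral gap for the $\PSLc$-action on $L^2(\Pi\backslash\PSLc)$ --- applied to carefully chosen bump functions on small pieces of $FM$; turning mixing into a quantitative count, with an error that beats the $e^{2R}$-size of the configuration, is the main technical obstacle.

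Next comes the \emph{assembly}, the combinatorial step. Along each cuff $\gamma$ one wishes to match the available feet in pairs so that the two feet identified along $\gamma$ differ by the ``inch-worm'' move (a unit shift along $\gamma$ composed with a half-turn), up to an error $\ll 1$; this is exactly the condition making the gluing nearly geodesic. Because the feet over every $\gamma$ are equidistributed with small error, the relevant bipartite ``matching graph'' --- slots being (pants, cuff) incidences, edges being admissible gluings, with the involution given by the inch-worm move --- satisfies Hall's marriage condition, so a perfect matching exists. Performing all these gluings simultaneously produces a closed surface $S$ (pass to a connected component) with an immersion $\iota\colon S\to M$ built from nearly geodesic pants glued with small shear and bend.

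Finally one shows $\iota$ is $(1+\eps)$-quasi-geodesic and $\iota_\ast$ is injective with $(1+\eps)$-quasi-Fuchsian image. For the quasi-geodesic property, a path in $\widetilde S$ that concatenates the geodesic seams and cuff-arcs of pants turns by only small exterior angles and goes nearly straight for a definite proportion of its length; a local-to-global argument in the CAT$(-1)$ space $\Hyp$ --- the same mechanism as the stability of quasi-geodesics (Theorem~\ref{th_Morse_lemma}), but now with constants tending to $1$ as $R\to\infty$ --- shows its development stays within bounded distance of a geodesic with multiplicative constant $1+\eps$. Hence $\iota_\ast(\pi_1 S)$ is a quasi-Fuchsian subgroup of $\Pi\le\PSLc$; injectivity follows since a nontrivial kernel element would give a loop in $S$ whose geodesic representative develops to a quasi-geodesic of $\Hyp$ with \emph{distinct} ideal endpoints, contradicting null-homotopy in $M$. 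The limit set of the image is then a $(1+\eps)$-quasicircle with $\eps\to 0$ as $R\to\infty$, so choosing $R$ large enough yields the statement. The step I expect to be the genuine obstacle is the exponentially sharp equidistribution of feet: only an exponential mixing rate is strong enough to validate Hall's condition, and it is precisely this ingredient --- the spectral gap of the hyperbolic frame flow --- that has no analogue for a general negatively curved metric, so that the theorem is genuinely about the hyperbolic metric $h_0$.
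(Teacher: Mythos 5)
This theorem is not proved in the paper at all --- it is quoted as an external result from Kahn--Markovi\'c \cite{KM1} (with \cite{Bergeron} cited for an exposition) --- so there is no internal proof to compare against, and your sketch is a faithful outline of the actual Kahn--Markovi\'c argument: good pants, equidistribution of feet from exponential mixing of the frame flow, Hall's marriage theorem with the unit-shift (``inch-worm'') gluing, and an effective quasi-geodesic stability argument giving the $(1+\eps)$-quasi-Fuchsian conclusion. Only minor quantitative normalizations (cuff length $\approx R$ versus $2R$, and the stated $e^{2R}$ count of good pants) are imprecise, but they do not affect the structure of the argument.
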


\subsubsection{Topological counting} By a theorem of Thurston, given a positive integer $g$, there are finitely many conjugacy classes of surface subgroups of genus up to $g$ in the fundamental group of a closed hyperbolic $3$-manifold. We refer to \cite[\S 3.3]{LabourieBourbaki} for a proof of this fact using minimal surfaces. Kahn and Markovi\'c provide a precise topological asymptotic counting result showing that the number of such conjugacy classes grows superexponentially with genus.

Given a cocompact lattice $\Pi\leq\PSLc$, we let QF denote the set of conjugacy classes $[\Gamma]$ of quasi-fuchsian subgroups of $\Pi$. Given $[\Gamma]\in$ QF we let g$(\Gamma)$ denote the genus of any representative $\Gamma$.

\begin{theorem}[Kahn-Markovi\'c's topological counting \cite{KM2}]\label{th.KM2}
If $(M,h_0)$ is a closed hyperbolic $3$-manifold then
\begin{equation}\label{eq.topo_counting}\lim_{g\to\infty}\frac{1}{2g\log(g)}\log\#\{[\Gamma]\in\mathrm{QF};\,\mathrm{g}(\Gamma)\leq g\}=1.
\end{equation}
\end{theorem}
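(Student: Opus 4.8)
The plan is to bound the counting function from both sides by
\[
(2-o(1))\,g\log g\ \le\ \log\#\{[\Gamma]\in\mathrm{QF};\,\mathrm{g}(\Gamma)\le g\}\ \le\ (2+o(1))\,g\log g ,
\]
each inequality being obtained by encoding a quasi-Fuchsian conjugacy class by a combinatorial model built from finitely many local pieces, and then counting the admissible models together with the redundancy with which a single class is produced.

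For the lower bound I would use the good-pants technology of Kahn--Markovi\'c behind Theorem \ref{th.KM}, in the refined counting form of \cite{KM2} (compare \cite{KM1,Kahn_Labourie_Mozes}). Fix $R$ large and $\e>0$ small, and let $\Pi_{R,\e}$ be the finite \emph{library} of homotopy classes of $(R,\e)$-good pairs of pants in $M$: immersed pants whose three cuffs are closed geodesics of length within $\e$ of $2R$ and whose normal framings along the cuffs are $\e$-close to the symmetric model. One assembles $2g-2$ library pants by gluing their cuffs in pairs along $(1,\e)$-good pairings (matching cuffs of nearly equal length with shearing parameter $\e$-close to $1$); the result is a closed surface of Euler characteristic $-(2g-2)$, hence of genus $g$, and, by the bending estimates of \cite{KM1} valid once $\e\ll 1/R$, its fundamental group embeds in $\Pi\simeq\pi_1(M)$ as a cocompact surface group with $(1+\eps)$-quasicircular limit set, $\eps=\eps(\e)\to0$, so it defines an element of $\mathrm{QF}$ with $\mathrm{g}(\Gamma)=g$. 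It remains to count how many pairwise non-conjugate such subgroups arise. The number of $(1,\e)$-good pairings on an ordered tuple of $2g-2$ library pants is, by a random-pairing (configuration-model) estimate, comparable to the double factorial of the number $3(2g-2)$ of cuffs, up to a factor $e^{O(g)}$ coming from the bounded library size and the $O(R)$ shearing choices per curve; one then divides by the $(2g-2)!$ relabelings and by the number of panted models carrying one fixed subgroup. Since $M$ is aspherical, a $\pi_1$-injective map is determined up to homotopy by the conjugacy class it induces, so the latter multiplicity is the number of good pants decompositions of that one immersed surface, which is controlled subexponentially on the scale $g\log g$ by Thurston-type finiteness (cf.\ \cite[\S3.3]{LabourieBourbaki}) together with the rigidity of the $(1,\e)$-good structure. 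Carrying out this bookkeeping --- the technical core of \cite{KM2} --- produces the exponent $(2-o(1))g\log g$; one also checks, by a standard random-gluing argument, that a definite proportion of the pairings yields a connected surface (otherwise one argues component by component).

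For the upper bound I would take $[\Gamma]\in\mathrm{QF}$ with $\mathrm{g}(\Gamma)\le g$ and realize it by a least-area (or pleated) immersion $f\colon S\to M$ of a closed surface of genus $\le g$; by Gauss--Bonnet, and because the induced metric has curvature $\le-1$, one has $\mathrm{Area}(f)\le 2\pi(2g-2)$. Fix a bounded-geometry triangulation of $M$ with vertices in an $\e$-net. The bounded total area forces $f(S)$ to meet the cells of $M$ in only $O(g)$ sheets altogether, and, after a small homotopy, each sheet is $\e$-close to one of finitely many model pieces; the homotopy class of $f$ --- hence $[\Gamma]$ --- is then recovered from the pattern of these $O(g)$ sheets together with the data, drawn from fixed finite sets, of how consecutive sheets are identified across the faces. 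Counting such patterns is a permutation-type count over $O(g)$ objects and gives at most $(O(g))^{2g}\,e^{O(g)}$ admissible configurations, where the constant $2$ again rests on the careful analysis of \cite{KM2}; since distinct conjugacy classes give distinct homotopy classes of $f$, this yields $\log\#\{[\Gamma]\in\mathrm{QF};\,\mathrm{g}(\Gamma)\le g\}\le(2+o(1))g\log g$.

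The main obstacle is not the existence of polynomial-in-$g$ bounds on the exponents but pinning them to the \emph{exact} value $2g\log g$. On the lower side this forces one to arrange the gluing count so that all sources of redundancy --- relabelings of the pants, and the multiplicity of good pants decompositions of a fixed subgroup --- are negligible at the scale $g\log g$, which is exactly where the quantitative $(1,\e)$-good geometry and a delicate combinatorial analysis of the configuration model of random pairings become indispensable. On the upper side one must recognize that the relevant complexity is the number of \emph{sheets}, of order $g$, so that the dominant count is a factorial-type quantity with the sharp base; a cruder encoding of $f$ would only produce a larger exponent. Matching these two delicate estimates is the substance of \cite{KM2}.
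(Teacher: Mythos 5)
The paper does not prove this statement: Theorem \ref{th.KM2} is quoted directly from \cite{KM2}, so the only question is whether your sketch could stand on its own, and it cannot. The entire content of the theorem is the exact constant $2$ in the scale $2g\log g$ (crude bounds of the form $g^{c_1g}\leq\#\{\cdot\}\leq g^{c_2g}$ are comparatively soft), and at every point where that constant must actually be produced --- ``the technical core of \cite{KM2}'', ``the constant $2$ again rests on the careful analysis of \cite{KM2}'' --- you defer to the very result being proved. That is a citation, not a proof.

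Beyond this structural circularity there are concrete problems. In the lower bound, the gluing model is wrong: two good pants can only be glued along a \emph{common} cuff, i.e.\ cuffs lying on the same closed geodesic of $M$ (with shear close to $1$), not along ``cuffs of nearly equal length''; pairing nearly-equal-length cuffs sitting on different geodesics produces no surface in $M$. Moreover your own bookkeeping does not yield the claimed exponent: with a fixed finite library $\Pi_{R,\e}$ the pairing count $(6g-7)!!=e^{(3+o(1))g\log g}$, divided by the $(2g-2)!=e^{(2+o(1))g\log g}$ relabelings (and before any further division by the number of panted structures carrying one subgroup), leaves at most $e^{(1+o(1))g\log g}$, i.e.\ exponent $1$, not $2-o(1)$; for the exponent $2$ to emerge the supply of admissible pants (equivalently the cuff-length scale) must be allowed to grow with $g$ and be balanced against the matching constraints, which is exactly the delicate count you have omitted. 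The appeal to ``Thurston-type finiteness'' also does not control the relevant multiplicity, namely how many good pants decompositions represent one fixed subgroup. In the upper bound, the sheet/triangulation encoding of a bounded-area pleated or least-area representative gives a bound of the shape $(Cg)^{C'g}$ with $C'$ determined by the encoding (number of sheets, gluing data per face), and nothing in your argument pins $C'$ to $2+o(1)$; again the sharp constant is simply asserted. In short, the proposal is a reasonable description of the Kahn--Markovi\'c strategy (good-pants gluings from \cite{KM1} for the lower bound, bounded-area encodings for the upper bound), but as a proof of \eqref{eq.topo_counting} it has genuine gaps precisely where the theorem lives.
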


Kahn-Marković’s result shows that the above limit also holds if one counts all conjugacy classes of surface groups of genus $\leq g$ and not only the quasi-Fuchsian ones.

\subsection{Geometric asymptotic counting and rigidity results}
The purpose of \cite{CMN} and \cite{ALS} is to give a geometric asymptotic counting. The problem of finding geometric representatives to conjugacy classes of surface subgroups is treated there by solving asymptotic Plateau problems for minimal surfaces (in \cite{CMN}) or for $k$-surfaces (in \cite{ALS}).

Assume that $h$ is a Riemannian metric on $M$ such that $\sect_h\leq -1$, and fix a number  $0<k<1$.

\subsubsection{$k$-surface representatives}

Let $\Gamma$ be a quasi-Fuchsian subgroup of $\Pi$. Its limit set $\bord\Gamma$ is an oriented quasicircle so we can consider the $k$-disc $D_{k,h}(\bord \Gamma)$ that it spans. By uniqueness of the solution of the asymptotic Plateau problem, the disc $D_{k,h}(\bord \Gamma)$ is $\Gamma$-invariant. The quotient
$$S_{k,h}([\Gamma])=D_{k,h}(\bord \Gamma)/\Gamma\dans M,$$
is naturally a closed surface whose fundamental group represents $[\Gamma]$ by construction. This is the unique $k$-surface representative of $[\Gamma]$.

\subsubsection{The geometric counting}\label{sss.geo_counting}
We are now ready to state the main result of \cite{ALS}, which is a geometric asymptotic counting result. This result defines an entropy that counts closed quasi-Fuchsian $k$-surfaces according to their areas and that satisfies a rigid inequality, analogous to Besson-Courtois-Gallot's theorem \ref{t.BCG}.

\begin{defi}[Area entropy of $k$-surfaces]\label{def.entropy}
Let $h$ be a Riemannian metric with $\sect_h\leq -1$. Fix $0<k<1$ and define the area entropy of $k$-surfaces as
$$\mathrm{Ent}_{k}(M,h)=\liminf_{A\to\infty}\frac{1}{A\log(A)}\log\#\left\{[\Gamma]\in\mathrm{QF};\Area_h(S_{k,h}([\Gamma])\leq A\right\}.
$$
\end{defi}

\begin{theorem}[Geometric counting of $k$-surfaces]\label{th.main_result_geo_counting}
Let $h$ be a Riemannian metric with $\sect_h\leq -1$ and $0<k<1$. Then we have the following chain of inequalities
$$\frac{H(M,h)^2}{2\pi}\geq\mathrm{Ent}_k(M,h)\geq \mathrm{Ent}_k(M,h_0)=\frac{1-k}{2\pi},$$
where $H(M,h)$ denotes the topological entropy of the geodesic flow.

Moreover the equality $\mathrm{Ent}_k(M,h)=\mathrm{Ent}_k(M,h_0)$ holds if and only if $h$ and $h_0$ are isometric.
\end{theorem}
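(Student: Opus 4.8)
\textbf{Proof proposal for Theorem \ref{th.main_result_geo_counting}.}
The plan is to split the statement into three parts: the upper bound $\mathrm{Ent}_k(M,h)\leq H(M,h)^2/(2\pi)$, the computation $\mathrm{Ent}_k(M,h_0)=(1-k)/(2\pi)$ in the hyperbolic case, and the rigid comparison $\mathrm{Ent}_k(M,h)\geq\mathrm{Ent}_k(M,h_0)$ together with its equality case. The first two are of a ``soft'' nature and can be addressed by controlling the area of the $k$-surface $S_{k,h}([\Gamma])$ in terms of the genus of $\Gamma$; the last is where the geometric and dynamical machinery enters.

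\emph{Step 1: relating area to genus via Gauss--Bonnet.} For any $k$-disc $D_{k,h}(\bord\Gamma)$ with $\sect_h\leq -1$ and $0<k<1$, Gauss' equation \eqref{eq.Gauss_eq_ksurf} gives $\kappa_{int}\leq k-1<0$ pointwise, and in the hyperbolic case the intrinsic curvature is exactly $k-1$ by \eqref{eq.Gauss_eq}. Applying Gauss--Bonnet to the closed surface $S_{k,h}([\Gamma])$, whose Euler characteristic is $2-2\mathrm g(\Gamma)$, yields
$$\int_{S_{k,h}([\Gamma])}\kappa_{int}\,d\Area_h=2\pi(2-2\mathrm g(\Gamma)),$$
so that $\Area_h(S_{k,h}([\Gamma]))\leq \dfrac{4\pi(\mathrm g(\Gamma)-1)}{1-k}$, with equality throughout when $h=h_0$. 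This turns the area-counting function into the genus-counting function of Kahn--Markovi\'c (Theorem \ref{th.KM2}): in the hyperbolic case $\Area=\tfrac{4\pi}{1-k}(\mathrm g-1)$ exactly, so substituting $A=\tfrac{4\pi}{1-k}(\mathrm g-1)$ into Definition \ref{def.entropy} and using \eqref{eq.topo_counting} gives $\mathrm{Ent}_k(M,h_0)=\tfrac{1-k}{2\pi}$ after the elementary asymptotic $A\log A\sim \tfrac{4\pi}{1-k}\cdot 2\mathrm g\log\mathrm g$. In variable curvature the same substitution combined with the inequality $\Area\leq\tfrac{4\pi}{1-k}(\mathrm g-1)$ shows that small area forces small genus, but this only gives a \emph{lower} bound on $\mathrm{Ent}_k(M,h)$ in terms of a genus count of quasi-Fuchsian subgroups — which is still $=1$ by Kahn--Markovi\'c — and hence $\mathrm{Ent}_k(M,h)\geq\tfrac{1-k}{2\pi}=\mathrm{Ent}_k(M,h_0)$. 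This already yields the middle inequality and its ``$\geq$'' part for free; the rigid equality case, however, is not contained in this soft argument and must be treated separately.

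\emph{Step 2: the upper bound via topological entropy.} For $\mathrm{Ent}_k(M,h)\leq H(M,h)^2/(2\pi)$ the plan is to bound the area of $S_{k,h}([\Gamma])$ from \emph{below} in terms of $\mathrm g(\Gamma)$, so that large genus forces large area, and then compare the genus-counting rate against the exponential growth rate $H(M,h)$ of closed geodesics. The key is that a quasi-Fuchsian surface group $\Gamma$ of genus $\mathrm g$ contains a closed geodesic (for the intrinsic hyperbolic metric on $S$) of controlled length, whose geodesic representative in $(M,h)$ has $h$-length bounded in terms of the diameter and the area of $S_{k,h}([\Gamma])$; conversely, the systole and the number of homotopy classes realized in a fixed ball are controlled, and counting pairs (closed geodesic, genus) lets one dominate $\#\{\mathrm g(\Gamma)\leq \mathrm g\}$ by something like $e^{H(M,h)\cdot\sqrt{\text{const}\cdot A}}$, whose $\tfrac1{A\log A}\log(\cdot)$ limit is $H(M,h)^2/(2\pi)$ once the constant is pinned down by the isoperimetric-type relation $\mathrm{length}^2\lesssim \Area$ on a hyperbolic surface. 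I would organize this as: (i) intrinsic hyperbolic geometry of $S$ bounds a generating set of $\pi_1(S)$ by geodesics of length $O(\log\mathrm g)$ inside a region of area $O(\mathrm g)$; (ii) the quasi-Fuchsian/quasi-geodesic bound transfers this to $h$-geodesics of comparable length in $M$; (iii) a counting argument against the definition of $H(M,h)$ in \S\ref{sss_topo_entropy}.

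\emph{Step 3: the rigid equality case.} This is the main obstacle. The hypothesis $\mathrm{Ent}_k(M,h)=\mathrm{Ent}_k(M,h_0)=\tfrac{1-k}{2\pi}$ must be leveraged to force $h$ and $h_0$ isometric. The strategy is to show that equality forces the area of $S_{k,h}([\Gamma])$ to be asymptotically $\tfrac{4\pi}{1-k}(\mathrm g(\Gamma)-1)$ for ``most'' quasi-Fuchsian $[\Gamma]$ — i.e.\ the Gauss--Bonnet inequality of Step 1 is asymptotically saturated — which by \eqref{eq.Gauss_eq} forces $\sect_h|_{Te(S)}\to -1$ along these surfaces, hence (by density, using that compact leaves are dense in Labourie's lamination and that quasi-Fuchsian $k$-surfaces equidistribute toward a fully supported measure) that $\sect_h\equiv -1$ identically on $M$. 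Mostow rigidity then finishes. Making the equidistribution step precise is the delicate point: one needs a Bowen--Margulis-type equidistribution statement for closed quasi-Fuchsian $k$-surfaces toward a fully supported lamination-invariant measure on $\overline{\mathcal S_k(M)}$ (as alluded to in the dynamical properties of the $k$-surface lamination and in \cite{ALS,ALS2}), so that ``curvature $-1$ on average along almost all closed $k$-surfaces'' upgrades to ``curvature $-1$ everywhere.'' Alternatively — and perhaps more robustly — one can invoke Besson--Courtois--Gallot (Theorem \ref{t.BCG}) at the end: if equality additionally propagates up the chain to force $H(M,h)^2/(2\pi)=\tfrac{1-k}{2\pi}$, i.e.\ $H(M,h)=\sqrt{1-k}<2$... no, that cannot hold since $H(M,h)\geq 2$ under no volume normalization is not available; so the curvature-pinching route via Gauss--Bonnet saturation and equidistribution is the one I would pursue, with the volume normalization entering only implicitly through the rigidity of the constant-curvature case. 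I expect the bookkeeping to show that equality in the \emph{second} inequality $\mathrm{Ent}_k(M,h)\geq\mathrm{Ent}_k(M,h_0)$ is what is actually assumed, and that this equality — via the genus/area dictionary of Step 1 — is exactly equivalent to asymptotic saturation of Gauss--Bonnet, closing the argument.
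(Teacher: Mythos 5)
Your Step~1 is correct and follows exactly the paper's route: Gauss' equation combined with Gauss--Bonnet gives $\Area_h(S_{k,h}([\Gamma]))\leq\tfrac{4\pi}{1-k}(\mathrm g(\Gamma)-1)$ with equality in the hyperbolic case, and then Kahn--Markovi\'c's superexponential genus count yields both $\mathrm{Ent}_k(M,h_0)=\tfrac{1-k}{2\pi}$ and the lower bound $\mathrm{Ent}_k(M,h)\geq\mathrm{Ent}_k(M,h_0)$. (A small slip in phrasing: the direction you need is ``small genus forces small area,'' i.e.\ every $[\Gamma]$ with $\mathrm g(\Gamma)\leq g$ contributes to the count with $A=\tfrac{4\pi}{1-k}(g-1)$, not ``small area forces small genus''; the conclusion is unchanged.)

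Step~2 is where the argument breaks down. Your plan is to bound $\Area_h(S_{k,h}([\Gamma]))$ from below in terms of $\mathrm g(\Gamma)$ by a systole/isoperimetric argument, and you write the resulting bound on the counting function as ``something like $e^{H(M,h)\sqrt{\mathrm{const}\cdot A}}$.'' But $\tfrac{1}{A\log A}\log\bigl(e^{H\sqrt{cA}}\bigr)=\tfrac{H\sqrt{cA}}{A\log A}\to 0$, which contradicts the target $\tfrac{H^2}{2\pi}>0$; the sought bound must be of the form $e^{\mathrm{const}\cdot A\log A}$, which is what a \emph{linear} lower bound $\Area_h(S)\geq\tfrac{4\pi}{H(M,h)^2}(\mathrm g-1)$ would give after substitution into \eqref{eq.topo_counting}. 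This is precisely the content of the route the paper cites: Calegari--Marques--Neves obtain the upper bound by applying Katok's sharp two-dimensional entropy--area inequality \cite{Katok_entropy_closed} to the $k$-surface with its induced metric (which has curvature $\leq k-1<0$), combined with the observation that the topological entropy of the intrinsic geodesic flow of $S_{k,h}([\Gamma])$ is dominated by $H(M,h)$. The elementary scheme you sketch (generating set of $\pi_1(S)$ by short geodesics, transfer to $M$, coarse counting) does not obviously reproduce the sharp constant, and as written produces the wrong order of magnitude. This step requires Katok's inequality or something equivalent in strength.

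Step~3 correctly identifies the mechanism -- asymptotic saturation of the Gauss--Bonnet inequality forces $\sect_h\to -1$ along the tangent planes of ``most'' quasi-Fuchsian $k$-surfaces, and one then needs an equidistribution statement to promote this to $\sect_h\equiv -1$ on all of $T^1M$, after which Mostow concludes -- but you do not prove the equidistribution, and this is exactly the hard part. Note that the hypothesis here is weaker than in Theorem~\ref{th.rigidity_MAS}: equality of entropies only gives area-equality asymptotically along a subsequence, not for every $[\Gamma]$, so one must be careful about extracting pointwise curvature information from an averaged/limiting statement. The paper itself only carries out the full argument for the marked area spectrum (Theorem~\ref{th.rigidity_MAS}), invoking the Kahn--Markovi\'c equidistribution input (Theorem~\ref{t.equidistrib}) whose proof rests on Ratner's theorem via Theorem~\ref{thm.Ratner}; the entropy-equality case is deferred to \cite{ALS}. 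Your proposal would need to fill this in, in particular showing that the liminf in Definition~\ref{def.entropy} is realized along a Kahn--Markovi\'c sequence that equidistributes toward the fully supported laminar measure.
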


The upper bound was obtained by Calegari-Marques-Neves in \cite{CMN} and is a consequence of the two-dimensional analogue of Besson-Courtois-Gallot's inequality obtained by Katok in \cite{Katok_entropy_closed}. We refer to \cite[Section 7]{CMN} or \cite[Proposition 6.3]{LabourieBourbaki} for a complete argument.

The equality $\mathrm{Ent}_k(M,h_0)=\frac{1-k}{2\pi}$ easily follows from the Gauss-Bonnet theorem. Indeed by Gauss' equation \eqref{eq.Gauss_eq}, in hyperbolic geometry, $k$-surfaces have intrinsic curvature $k-1<0$. Hence the Gauss-Bonnet theorem implies that for a closed $k$-surface $S\dans(M,h_0)$ of genus $g$
$$g=\frac{1-k}{4\pi}\Area_{h_0}(S)+1.$$

Then one computes the entropy in this case by using the topological asymptotic counting of Kahn-Markovi\'c. This Gauss-Bonnet argument also proves the inequality $\mathrm{Ent}_k(M,h)\geq\mathrm{Ent}_k(M,h_0)$. Indeed, if $\sect_h\leq -1$ then the Gauss-Bonnet theorem and the Gauss equation together give for every conjugacy class of quasi-Fuchsian group $[\Gamma]$ 
\begin{eqnarray*}
(1-k)\Area_{h_0}(S_{k,h_0}([\Gamma]))&=&4\pi(\text{g}(\Gamma)-1)\\
 &=&\int_{S_{k,h}([\Gamma])}\left(\left|\sect_h|_{TS_{k,h}([\Gamma])}\right|-k\right)d\Area_h\\
 &\geq& (1-k)\Area_h(S_{k,h}([\Gamma])).
\end{eqnarray*}

This means that the area of a quasi-Fuchsian $k$-surface is smaller in variable curvature $\geq -1$ than in the hyperbolic case. Hence, there are more closed quasi-Fuchsian $k$-surfaces with small area in $(M,h)$ than in $(M,h_0)$. The inequality follows. The rigidity of the equality is the interesting result. Note that we get
\begin{equation}\label{eq.equality_case_curv_-1}
\Area_{h_0}(S_{k,h_0}([\Gamma]))=\Area_{h}(S_{k,h}([\Gamma]))\Longrightarrow \sect_h|_{TS_{k,h}}=-1.
\end{equation}

\subsubsection{Rigidity of the hyperbolic marked area spectrum}\label{sss.statement_rigidity_MAS}

 We state another rigidity result which is more in the spirit of the marked length spectrum rigidity. We define the marked area spectrum of $k$-surfaces for the metric $h$ as

\begin{equation}\label{eq.area_spectrum}\MAS_{k,h}:\text{QF}\to\R^+,\,\,\,\,\,\,\,\,\,[\Gamma]\mapsto\Area_h(S_{k,h}([\Gamma])).
\end{equation}
The following rigidity theorem was proven in \cite{ALS}. It is an analogue of Hamenst\"adt's result (Theorem \ref{th.ham_MLS}).

\begin{theorem}[Rigidity of the hyperbolic marked area spectrum]\label{th.rigidity_MAS}
Let $(M,h_0)$ be a closed hyperbolic $3$-manifold. Let $h$ be a Riemannian metric on $M$ with $\sect_h\leq -1$ and $k\in(0,1)$. Then $\MAS_{k,h}=\MAS_{k,h_0}$ if and only if $h$ and $h_0$ are isometric.
\end{theorem}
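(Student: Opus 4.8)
The plan is to combine the Gauss--Bonnet identity exploited in \S\ref{sss.geo_counting} with the equidistribution of almost-Fuchsian closed $k$-surfaces, reducing the statement to the constant-curvature characterisation of the hyperbolic metric. The implication ``isometric $\Rightarrow$ equal spectra'' is trivial, so assume $\MAS_{k,h}=\MAS_{k,h_0}$. By definition of the marked area spectrum this means $\Area_h(S_{k,h}([\Gamma]))=\Area_{h_0}(S_{k,h_0}([\Gamma]))$ for \emph{every} $[\Gamma]\in\mathrm{QF}$; hence, by the chain of (in)equalities preceding \eqref{eq.equality_case_curv_-1}, $\sect_h\equiv-1$ along each closed quasi-Fuchsian $k$-surface $S_{k,h}([\Gamma])$. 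The point is then to propagate this from the countable family of quasi-Fuchsian $k$-surfaces to \emph{all} tangent $2$-planes of $(M,h)$.

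To do so I would use the abundance and equidistribution of almost-Fuchsian closed $k$-surfaces. Kahn-Markovi\'c's Theorem \ref{th.KM} produces, for every small $\eps$, many closed $(1+\eps)$-quasi-Fuchsian surfaces, and by continuity of the Plateau solution (Theorem \ref{th.cont.Plateau}) the $k$-surfaces $S_{k,h}([\Gamma])$ they determine are, locally and in the $C^{\infty}_{\loc}$ sense, close to $k$-discs spanned by round circles (which, by Theorem \ref{t.foliated_PP}, foliate $T^1X$ by their Gauss lifts, so their unit normals sweep out all of $T^1X$). The equidistribution results of \cite{ALS} then give that, as $\eps\to0$, the Gauss lifts of the surfaces $S_{k,h}([\Gamma])$, viewed inside $T^1M$, become dense in $T^1M$. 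Since the tangent plane of $S_{k,h}([\Gamma])$ at a point $p$ is exactly the orthogonal complement of its unit normal $N(p)$, this density means that every co-oriented $2$-plane $v^{\perp}$ ($v\in T^1M$) is a limit of tangent planes of quasi-Fuchsian $k$-surfaces. As $\sect_h$ is continuous on the Grassmann bundle $\mathrm{Gr}_2(TM)$ and equals $-1$ on these planes, we conclude $\sect_h\equiv-1$ on $M$; thus $(M,h)$ is a closed hyperbolic $3$-manifold, and Mostow rigidity furnishes a diffeomorphism $\phi$ with $\phi^{*}h=h_0$.

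One may reach the same conclusion more quickly: $\MAS_{k,h}=\MAS_{k,h_0}$ forces the two area-counting functions $A\mapsto\#\{[\Gamma]\in\mathrm{QF}:\Area_h(S_{k,h}([\Gamma]))\le A\}$ and its $h_0$-analogue to coincide, whence $\mathrm{Ent}_k(M,h)=\mathrm{Ent}_k(M,h_0)$ and the equality case of Theorem \ref{th.main_result_geo_counting} applies verbatim. In either presentation, the substantive content --- and the step I expect to be the main obstacle --- is the equidistribution of almost-Fuchsian closed $k$-surfaces, namely that their Gauss lifts become dense (indeed equidistributed) in $T^1M$: this is the technical heart of \cite{ALS}, resting on Kahn-Markovi\'c's counting and on a dynamical analysis (mixing and Ratner-type arguments) of the phase space of $k$-surfaces, together with the $C^{\infty}_{\loc}$ control on the Plateau solution needed to transfer information from unit normals to tangent $2$-planes. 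By contrast, the Gauss--Bonnet input, the continuity of $\sect_h$, and the appeal to Mostow rigidity are all soft.
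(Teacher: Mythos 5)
Your argument follows essentially the same route as the paper's proof of Theorem~\ref{th.rigidity_MAS}: Gauss--Bonnet (via the chain of inequalities culminating in \eqref{eq.equality_case_curv_-1}) forces $\sect_h=-1$ on tangent planes of every quasi-Fuchsian $k$-surface, then the Kahn--Markovi\'c sequence of Theorem~\ref{t.equidistrib} together with continuity of the function $\sigma:v\mapsto\sect_h(v^\perp)$ propagates this to all of $T^1M$, and Mostow rigidity finishes. The only cosmetic difference is that you phrase the propagation step in terms of density of the Gauss lifts, whereas the paper works with weak-$\ast$ convergence of the laminar measures $\hat\mu_{k,h}(\bord\Gamma_n)$ to a fully-supported limit $\hat\mu_\infty$ and then argues $\sigma=-1$ $\hat\mu_\infty$-a.e.; these are equivalent here (full support of the limit measure implies density of the union of supports, and continuity of $\sigma$ closes the loop), but the measure-theoretic phrasing makes explicit the reliance on the dichotomy of Theorem~\ref{thm.dichotomy_laminar} to rule out the limit concentrating on a closed Fuchsian surface. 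One caveat on your ``quicker route'': deducing $\mathrm{Ent}_k(M,h)=\mathrm{Ent}_k(M,h_0)$ from equal spectra and invoking the equality case of Theorem~\ref{th.main_result_geo_counting} is only non-circular if that equality case is established independently of the marked-area rigidity, which the survey does not make explicit; in \cite{ALS} both rigidity statements ultimately rest on the same equidistribution input, so the ``shortcut'' does not actually bypass the technical heart you correctly identified.
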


Note that by \eqref{eq.equality_case_curv_-1} if $\MAS_{k,h}=\MAS_{k,h_0}$, the sectional curvature of $h$ must be equal to $-1$ in restriction to all tangent planes of all closed quasi-Fuchsian surfaces in $M$. The key point is to prove the equidistribution of tangent planes of quasi-Fuchsian $k$-surfaces to prove that the sectional curvature equals $-1$ in \emph{all} planes of $TM$. This would imply that the metric $h$ is hyperbolic. Mostow's rigidity theorem then proves that $h$ must be isometric to $h_0$.

\subsection{Equidistribution of quasi-Fuchsian $k$-surfaces}
We use the formalism of foliated Plateau problems introduced in the previous section and discuss the equidistribution property needed to prove Theorems \ref{th.main_result_geo_counting} and \ref{th.rigidity_MAS}.

\subsubsection{Conformal currents, $\PSL$-invariant measures and laminar measures}\label{sss.conf_currents_PSL_inv_laminar_measures} The main reason we introduced the fibered Plateau problem in \S \ref{sss.fibered} is that it enables us to generalize the identification between geodesic currents and invariant measures by the geodesic flow (see Bonahon \cite{Bonahon_laminations} for example) in our context. Recall that geodesic currents are $\Pi$-invariant measures on the set of geodesics, i.e., the set $\bord^{(2)}X$ of pairs of points of the ideal boundary, and that they correspond to measures on $T^1X$ both invariant by $\Pi$ and by the geodesic flow. See \S \ref{sss.BM_measures} for the example of the Bowen-Margulis measure.

Consider the bundles $\bord:\text{FKD}_{k,h}(C)\to\text{QC}^+(C)$ and $\bord:\text{MKD}_{k,h}(C)\to\text{QC}^+(C)$ defined in \S \ref{sss.fibered}. Recall that the space $\QC^+(C)$ is separable and locally compact. We say that a Borel measure $\mu$ on $\FKD_{k,h}(C)$ (resp. $\MKD_{k,h}(C)$) is $\PSL$-invariant whenever there exists a Borel measure $m$ on $\QC^+(C)$ such that, in every trivializing chart $U\times F$ of $\bord$,
\begin{equation*}
\mu|_{U\times F}=m|_U\times\lambda_F\ ,
\end{equation*}
where $\lambda_F$ denotes the Haar measure on $\PSL$ (resp. the hyperbolic area of $\D$). Borel regular $\PSL$-invariant measures on $\FKD(C)$ (resp. $\MKD(C)$) are trivially in one-to-one correspondence with Borel regular measures on $\QC^+$ (see \cite[Lemma 4.2.1.]{ALS}). We call $m$ the \emph{factor} of $\mu$, and we call $\mu$ the \emph{lift} of $m$. By construction, a $\PSL$-invariant measure $\mu$ is also $\Pi$-invariant if and only if its factor $m$ is.

In other words, we have natural bijective correspondences between each the following classes of objects (see \cite[\S 4.2]{ALS} for a slightly different formalism).
\begin{enumerate}
\item $\Pi$-invariant Borel regular measures $\hat m$ on the space $\text{QC}^+(C)$ of oriented $C$-quasicircles; \emph{these are the analogues in our context of geodesic currents}.
\item $(\Pi,\PSL)$ bi-invariant Borel measures $\hat\nu$ over $\FKD_{k,h}(C)$; \emph{these are the analogues in our context of measures invariant under the geodesic flow}.
\item $\Pi$-invariant Borel measures $\hat\mu$ on $\text{MKD}_{k,h}(C)$ which are totally invariant for the lamination $\cL_{k,h}$, that is, whose disintegration on each leaf of $\cL_{k,h}$ is a multiple of the hyperbolic area; \emph{these are the analogues in our context of totally invariant measures of the geodesic foliation}.
\end{enumerate}

According to Labourie's terminology (see \cite{LabourieBourbaki,MN_currents}), measures of the first class are called \emph{conformal currents}, measures of the second are called \emph{$(\Pi,\PSL)$-bi-invariant measures}, and measures of the third are called \emph{laminar measures}.

Conformal currents were the main topic of the recent and very interesting work \cite{MN_currents}, where the authors show in particular how to combine them with geodesic currents. We must also cite \cite{KMS} for an application of conformal currents in the study of random minimal surfaces. We expect that similar results should hold for $k$-surfaces.

An important example is given by the generalization of Dirac masses supported by closed geodesics. Let $[\Gamma]\in\text{QF}$ and $S=S_{k,h}([\Gamma])$. Let $C$ the quasi-conformality constant of $c:=\bord\Gamma$. Let $\delta(c)$ be the Dirac mass on $\text{QC}^+(C)$ supported by $c$. We now introduce the following conformal measure, bi-invariant measure, and laminated measure.\\

\noindent\emph{The conformal current associated to $S$}. We define
\begin{equation*}
\hat m(c)=\frac{1}{2\pi|\chi(S)|}\sum_{\gamma\in\Pi/\Gamma}\delta(\gamma\cdot c)\ ,
\end{equation*}
where $\gamma\cdot c$ abusively denotes the image of $c$ under any representative of $\gamma\in\Pi/\Gamma$. By construction, $m(c)$ is infinite, atomic and $\Pi$-invariant. We call it the \emph{conformal current} associated to $c$.\\

\noindent\emph{The bi-invariant measure associated to $S$}. Recall that the frame bundle $FD_c$ of $D_c$ naturally identifies, up to a choice of base point, with $\PSL$. Let $\omega_{k,h}(c)$ denote the image of the Haar measure under this identification. We define the $(\Pi,\PSL)$-bi-invariant measure associated to $S$ over $\FKD_{k,h}(C)$ by
\begin{equation*}
\hat\nu_{k,h}(c)=\frac{1}{2\pi|\chi(S)|}\sum_{\gamma\in\Pi/\Gamma}\omega_{k,h}(\gamma.c)\ .
\end{equation*}
This measure is analogous to the lift to $S_h \tilde X$ of the Dirac mass supported by a periodic orbit of the geodesic flow.\\

\noindent \emph{The laminar measure associated to $S$.} Let $\lambda_{k,h}(c)$ denote the area measure of the Poincar\'e metric of the disk $D_c$. The laminar measure associated to $S$ is the measure
\begin{equation*}
\hat\mu_{k,h}(c)=\frac{1}{2\pi|\chi(S)|}\sum_{\gamma\in\Pi/\Gamma}\lambda_{k,h}(\gamma.c)\ .
\end{equation*}
The quotient of this measure under the action of $\Pi$ yields a Borel regular measure $\mu_{k,h}$ over $\MKD_{k,h}(C)/\Pi$ supported on $\hat S$, where here $\hat S$ is viewed as a closed leaf of $\cL_{k,h}$.

Note that the measures on $\text{QC}^+(C)$ do not depend of the metric $h$, nor on the constant $k$. Only the $\PSL$-invariant measure $\hat\mu_{k,h}(c)$ does. This allows one to change the metric and work with the hyperbolic metric when needed. This is useful when one needs to apply Ratner's theory.

\subsubsection{Ratner's theorem and a criterion for equidistribution}

We will follow the main idea of \cite{CMN} (see also \cite{LabourieBourbaki}) and consider limits of measures $\hat\mu_{k,h}(c)$ when $c$ tends to a round circle $c_0$.

\begin{defi}[Ergodic conformal currents]
A conformal current $\hat m$ in $\QC^+(C)$ is said to be \emph{ergodic} if every $\Pi$-invariant measurable function on $\QC^+(C)$ is constant $\hat m$-almost everywhere.
\end{defi}

Note that the space $\cC^+$ of round circles in $\C\PP^1$ is a $3$-dimensional manifold carrying a natural Haar measure which we denote by $\Leb$.

\begin{theorem}[Classification of ergodic conformal currents]
\label{thm.Ratner} For every ergodic conformal current $\hat m$ on $\cC^+$, the following dichotomy holds.
\begin{itemize}
\item Either $\hat m$ is proportional to $\Leb$.
\item Or $\hat m$ is the conformal current of a closed Fuchsian surface.
\end{itemize}
\end{theorem}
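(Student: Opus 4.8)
The plan is to reduce the classification of ergodic conformal currents on $\cC^+$ to Ratner's orbit-closure and measure-classification theorems for unipotent flows on the homogeneous space $\Pi\backslash\PSLc$. The first step is to transport the problem to the homogeneous setting. Recall from Theorem \ref{t.homogeneous_model} and the discussion of the homogeneous case that, for the hyperbolic metric $h_0$, the frame bundle $F\Hyp$ is $\PSLc$-equivariantly identified with $\PSLc$, the $\PSL$-action $\alpha_k\simeq\alpha_0$ becomes right-multiplication by $\PSL$, and $\cC^+=\QC^+(1)$ is identified with the space of $\PSL$-orbits, i.e.\ with $\PSLc/\PSL$ on the right. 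Because, as noted at the end of \S\ref{sss.conf_currents_PSL_inv_laminar_measures}, the measures on $\QC^+(C)$ do not depend on the metric $h$ or on $k$, we lose nothing by working entirely with $h_0$. Under the correspondence of \S\ref{sss.conf_currents_PSL_inv_laminar_measures}, an ergodic conformal current $\hat m$ on $\cC^+$ lifts to a $(\Pi,\PSL)$-bi-invariant locally finite measure $\hat\nu$ on $F\Hyp\simeq\PSLc$; passing to the quotient by $\Pi$ on the left yields a locally finite measure on $\Pi\backslash\PSLc$ that is invariant and ergodic under right-multiplication by the subgroup $\PSL\leq\PSLc$.

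The second step is to invoke Ratner. The subgroup $\PSL\subset\PSLc$ is generated by one-parameter unipotent subgroups, so Ratner's measure-classification theorem applies: any ergodic $\PSL$-invariant probability measure on $\Pi\backslash\PSLc$ is the homogeneous measure supported on a closed orbit $\Pi\backslash\Pi g H$ for some closed subgroup $H$ with $\PSL\leq H\leq\PSLc$. Since $\Pi$ is a cocompact lattice in $\PSLc$, the relevant measures are automatically finite after normalization, so the probability hypothesis in Ratner's theorem is satisfied (one should briefly check local finiteness forces finiteness here, using cocompactness; alternatively use Ratner's orbit-closure theorem directly). The only closed intermediate subgroups $H$ between $\PSL=\PSL_2(\R)$ and $\PSLc=\PSL_2(\C)$ are $H=\PSL$ itself and $H=\PSLc$ — this is an elementary Lie-theoretic fact (the Lie algebra $\mathfrak{sl}_2(\C)$, viewed as a real Lie algebra, contains no subalgebra strictly between $\mathfrak{sl}_2(\R)$ and $\mathfrak{sl}_2(\C)$, since any such would be an $\mathfrak{sl}_2(\R)$-submodule of the $3$-dimensional complementary representation, which is irreducible). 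Thus $\hat m$ ergodic forces either $H=\PSLc$, giving the Haar measure on $\Pi\backslash\PSLc$, which descends to $\Leb$ on $\cC^+$; or $H=\PSL$, in which case the support is a closed $\PSL$-orbit $\Pi\backslash\Pi g\PSL$, whose stabilizer $g^{-1}\Pi g\cap\PSL$ is a cocompact lattice in $\PSL$ (again by closedness plus finite volume), i.e.\ a Fuchsian surface subgroup, and the corresponding conformal current is exactly the one attached to the closed Fuchsian surface it bounds.

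The third step is bookkeeping: translating ``closed $\PSL$-orbit with cocompact stabilizer'' back through the dictionary of \S\ref{sss.conf_currents_PSL_inv_laminar_measures} into ``conformal current of a closed Fuchsian surface.'' The orbit $g\PSL$ in $F\Hyp$ is the frame bundle of a totally geodesic plane $P=g\cdot P_0$ whose ideal boundary is a round circle $c_0\in\cC^+$; the stabilizer $\Gamma:=g^{-1}\Pi g\cap\PSL$ being a cocompact Fuchsian lattice means $c_0$ is $\Gamma$-invariant with $c_0/\Gamma$ a closed Fuchsian surface in $M$, and the homogeneous measure on the orbit is, up to normalization, precisely $\hat m(c_0)=\frac{1}{2\pi|\chi(S)|}\sum_{\gamma\in\Pi/\Gamma}\delta(\gamma\cdot c_0)$. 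One must also confirm that $\PSL$-ergodicity of $\hat\nu$ is equivalent to $\Pi$-ergodicity of $\hat m$ in the sense of the stated definition — this follows from the bijective correspondence between the two classes of measures (Lemma 4.2.1 of \cite{ALS}) together with the fact that $\PSL$ acts transitively on fibers of $\bord$, so $\PSL\times\Pi$-invariant functions on $\FKD_{k,h}(1)$ correspond to $\Pi$-invariant functions on $\QC^+$.

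I expect the main obstacle to be a careful justification of finiteness/local-finiteness issues: Ratner's theorems are cleanest for probability measures, and here one starts with a merely locally finite conformal current. Cocompactness of $\Pi$ in $\PSLc$ is what saves the argument — it forces any $\Pi\backslash\PSLc$-supported ergodic invariant measure to be, after normalization, either finite (hence covered directly by Ratner) or supported on a single closed orbit that is itself of finite volume — but making this rigorous, and ruling out infinite ergodic invariant measures with no homogeneous structure, requires either appealing to the fact that $\Pi\backslash\PSLc$ is compact (so every locally finite $\PSL$-invariant measure is finite) or to Ratner-type results for infinite measures. The compactness observation makes this painless: on a compact homogeneous space every locally finite invariant measure is finite, so the reduction to Ratner's probability-measure classification is immediate. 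The remaining genuinely content-free steps — the intermediate-subgroup computation and the translation back to conformal currents — should be routine.
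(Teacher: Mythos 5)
Your proposal is correct and follows essentially the same route as the paper: pass to the hyperbolic metric (using that conformal currents are independent of $h$ and $k$), lift the ergodic current to a $(\Pi,\PSL)$-bi-invariant measure on $F_{h_0}\Hyp\simeq\PSLc$, apply Ratner's classification, and use that the only closed connected subgroups between $\PSL$ and $\PSLc$ are the two extremes before translating back to $\cC^+$. The extra points you raise — finiteness via compactness of $\Pi\backslash\PSLc$, the $\mathfrak{sl}_2(\R)$-module argument for intermediate subgroups, and the ergodicity dictionary — are correct fillings-in of details the paper's short proof leaves implicit.
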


\begin{proof}
We work with the hyperbolic metric $h:=h_0$ on $\tilde{X}$. Let $\hat\nu_0$ be an ergodic measure over $\PSLc\simeq F_{h_0}\tilde{X}$ which is both invariant under the left action of $\Pi$, and under the right action by multiplication of $\PSL$. By Ratner's classification theorem \cite{Ratner_Duke} (see \cite{Einsiedler} for a simple proof for the case of $\PSL$-actions), this measure is \emph{homogeneous}. That is, it is the Haar measure supported on some closed orbit $v_0G$ of some closed, connected Lie subgroup $G\leq\PSLc$ containing $\PSL$. Since the only closed and connected Lie subgroups of $\PSLc$ containing $\PSL$ are $\PSL$ and $\PSLc$ itself, this yields the following dichotomy:
\begin{itemize}
\item either $\nu_0$ is the Haar measure of $F_{h_0}\tilde X/\Pi$;
\item or $\nu_0$ is the Haar measure of the frame bundle of some closed, totally-umbilic surface of $\tilde X/\Pi$.
\end{itemize}
The result now follows.
\end{proof}

We deduce the following dichotomy for laminar measures.

\begin{theorem}[Dichotomy for laminar measures]\label{thm.dichotomy_laminar} There exists a unique ergodic laminar probability measure $\mu$ on $T^1M$ with full support. All other ergodic laminar measures on $T^1M$ are associated to closed Fuchsian $k$-surfaces.
\end{theorem}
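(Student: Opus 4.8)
The plan is to transport the classification of ergodic conformal currents (Theorem \ref{thm.Ratner}) through the correspondences of \S\ref{sss.conf_currents_PSL_inv_laminar_measures}, specialized to the foliated case $C=1$. Recall that by the foliated Plateau problem (Theorem \ref{t.foliated_PP}) the space $\MKD_{k,h}(1)$ is canonically identified with $T^1X$, the lamination $\cL_{k,h}$ becomes the $k$-surface foliation $\cF_{k,h}$, and $\QC^+(1)=\cC^+$. First I would record that the correspondence of \S\ref{sss.conf_currents_PSL_inv_laminar_measures}, applied with $C=1$ and then pushed to the $\Pi$-quotient, gives a bijection between laminar measures on $T^1M$ and $\Pi$-invariant conformal currents $\hat m$ on $\cC^+$, under which, in a foliation box $U\times F$, the laminar measure reads $\hat m|_U\times(\text{leafwise hyperbolic area})$. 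Two features of this bijection are what is needed. First, every laminar measure on $T^1M$ is finite: $T^1M$ is compact and admits a finite cover by foliation boxes with relatively compact plaques of finite hyperbolic area, so the total mass is finite (even though the associated conformal current on the non-compact space $\cC^+$ may be infinite), and the renormalizations match. Second, the bijection preserves ergodicity: the $\cF_{k,h}$-saturated measurable subsets of $T^1M$ pull back, through the boundary map $\bord:\MKD_{k,h}(1)\to\cC^+$, to $\Pi$-invariant measurable subsets of $\cC^+$, so a laminar measure is ergodic precisely when its factor is an ergodic conformal current.

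With this dictionary in hand, the theorem reduces to unwinding Theorem \ref{thm.Ratner}. Let $\hat\mu$ be an ergodic laminar probability measure on $T^1M$; its factor $\hat m$ on $\cC^+$ is ergodic, hence either proportional to $\Leb$ or the conformal current of a closed Fuchsian surface. In the second case, $\hat m$ is the current associated, as in \S\ref{sss.conf_currents_PSL_inv_laminar_measures}, to a Fuchsian subgroup $\Gamma\leq\Pi$, and the corresponding laminar measure on $T^1M$ is, up to a positive scalar, the area measure of the Poincaré metric carried by the Gauss lift $\hat S$ of the closed $k$-surface $S=S_{k,h}([\Gamma])$ spanned by the round circle $\bord\Gamma$; this $\hat S$ is a compact leaf of $\cF_{k,h}$, hence a proper closed subset of $T^1M$, so the measure does not have full support. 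In the first case, $\hat m$ is a multiple of $\Leb$, which is fully supported on $\cC^+$; since by Theorem \ref{t.foliated_PP} every point of $T^1M$ lies on some leaf $\widehat{D_c}$ with $c$ an arbitrary round circle, the corresponding normalized laminar measure $\mu$ has full support. Therefore $\mu$ is an ergodic laminar probability measure with full support, and every ergodic laminar measure on $T^1M$ is, up to scaling, either $\mu$ or one of those associated to a closed Fuchsian $k$-surface. Uniqueness is then immediate: any ergodic laminar probability measure with full support must belong to the first case (the second has proper support), and $\Leb$ is the unique $\PSLc$-invariant measure on $\cC^+$ up to a positive scalar, so $\mu$ is unique.

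The genuinely deep inputs have already been isolated in the statements assumed here: the foliated Plateau problem (Theorem \ref{t.foliated_PP}), which makes sense of $\mu$ and of the notion of full support, and Ratner's theorem, packaged as Theorem \ref{thm.Ratner}. Beyond those, the point that requires care is the measure-theoretic bookkeeping of the correspondence of \S\ref{sss.conf_currents_PSL_inv_laminar_measures}: checking that it is ergodicity-preserving, and that the homogeneous laminar measure — whose lift to $\cC^+$, respectively to the universal cover, is infinite — descends to a \emph{finite} measure on the compact quotient $T^1M$ and so can be normalized. The remaining verification, that a closed Fuchsian $k$-surface has a compact (hence non-dense, and in fact not even top-dimensional) Gauss lift, follows at once from compactness of $S_{k,h}([\Gamma])$ and continuity of the Gauss lift.
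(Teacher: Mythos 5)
Your proposal follows essentially the same route as the paper: identify laminar measures on $T^1M$ with conformal currents on $\cC^+$ via the correspondence of \S\ref{sss.conf_currents_PSL_inv_laminar_measures} specialized to $C=1$, and then read off the dichotomy from Theorem~\ref{thm.Ratner}, with full support of the measure matching full support of its factor and the Fuchsian case giving a measure carried by a compact leaf. The additional bookkeeping you supply (finiteness on the compact quotient, ergodicity being preserved by the factor map, support correspondence) is consistent with, and somewhat more explicit than, the paper's own argument.
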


\begin{remark}
When $\Pi$ contains no Fuchsian subgroup, there exists a unique laminar probability measure on $T^1M$ which is fully supported.
\end{remark}

\begin{proof}
Let $\hat\mu$ denote the lift of $\mu$ to $T^1M$ and let $\hat m$ denote its associated conformal current on $\cC^+$. If $\mu$ has full support, then so too does $\hat m$ so that, by Theorem \ref{thm.Ratner}, $\hat m$ is proportional to $\Leb$, and the first assertion follows. Likewise, if $\mu$ does not have full support, then, by Theorem \ref{thm.Ratner} again, $\hat\mu$ is the laminar measure associated to some closed Fuchsian $k$-surface, as in \S \ref{sss.conf_currents_PSL_inv_laminar_measures}, and the second assertion follows.
\end{proof}

\subsubsection{Equidistribution and Kahn-Markovi\'c's sequences} The following facts are the key to our rigidity results.

\begin{lemma}
Let $\Gamma_n$ be a sequence of $C_n$-quasi-Fuchsian subgroups of $\Pi$ and $S_n=S_{k,h}([\Gamma_n])$ be their $k$-surfaces representatives in $(M,h)$. Assume that $\lim_{n\to\infty}C_n=1$. Then any accumulation point of $\hat\mu_{k,h}(c_n)$ is a laminar measure supported on $\MKD_{k,h}(1)\simeq T^1X$.
\end{lemma}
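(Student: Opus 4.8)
The plan is to deduce the statement from the compactness theory for $k$-surfaces (Theorem \ref{th.Labourie_compactness}) together with the continuity of the Plateau solution (Theorem \ref{th.cont.Plateau}) and the structure theory of $\PSL$-invariant measures. First I would record that each $\hat\mu_{k,h}(c_n)$ is, by definition, a $\PSL$-invariant (indeed $(\Pi,\PSL)$-bi-invariant, after passing to $\FKD$) probability measure supported on the single closed leaf $\hat S_n\subset\MKD_{k,h}(C_n)$, normalized by $2\pi|\chi(S_n)|$ so as to have total mass $1$. Since $C_n\to 1$, all these measures can be regarded as living on $\MKD_{k,h}(C)$ for any fixed $C>1$ that dominates all the $C_n$; this space carries a cocompact $\Pi$-action, so the quotient is compact and the sequence of quotient probability measures on $\MKD_{k,h}(C)/\Pi$ is automatically tight, hence has a weak-$\ast$ accumulation point $\mu_\infty$, which is again a probability measure.

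The core of the argument is to identify the support and the disintegration of $\mu_\infty$. For the support: the quasicircles $c_n=\bord\Gamma_n$ are $C_n$-quasicircles with $C_n\to 1$, and by property (3) of quasicircles recalled in \S\ref{sss.fibered}, the only way a sequence of $C_n$-quasicircles with $C_n\to 1$ can fail to subconverge (after applying suitable elements of $\Pi$, which act cocompactly) to a genuine round circle is by degenerating to a singleton — but the presence of a nontrivial surface group $\Gamma_n$ acting on $c_n$ prevents this collapse. Hence, using the $\Pi$-action to recenter, the factor measures $\hat m(c_n)$ (or rather their push-forwards to the compact quotient of $\QC^+(C)$) accumulate on measures supported on the closed subset $\cC^+=\QC^+(1)$ of round circles. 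By the correspondence between $\PSL$-invariant measures on $\MKD_{k,h}(C)$ and their factors on $\QC^+(C)$ (the formalism of \S\ref{sss.conf_currents_PSL_inv_laminar_measures} and \cite[Lemma 4.2.1]{ALS}), together with continuity of the bundle map $\bord$, this forces $\mu_\infty$ to be supported inside $\MKD_{k,h}(1)$, which by Theorem \ref{t.foliated_PP} is identified with $T^1M$ (in the cover, $T^1X$). For the disintegration: $\PSL$-invariance is a closed condition under weak-$\ast$ limits, since it is expressed locally in trivializing charts as the product of the factor with a fixed Haar/area measure along fibers; thus $\mu_\infty$ is again $\PSL$-invariant, i.e. a laminar measure in the sense of class (3) of \S\ref{sss.conf_currents_PSL_inv_laminar_measures}, whose disintegration along the leaves of $\cL_{k,h}$ is a multiple of the Poincaré area.

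The main obstacle I anticipate is the passage to the limit of the supporting geometric objects, i.e. controlling in what sense the $k$-discs $D_{c_n}$ and their Gauss lifts $\hat D_{c_n}$ converge: one must rule out that, along the sequence, the recentered discs escape to infinity or degenerate to a tube or a point as allowed by Theorem \ref{th.cont_Laboubou} and the compactness theorem. This is where the hypothesis $C_n\to 1$ is essential — it keeps the limiting boundary curve a nondegenerate round circle, so option (2) (a point of $\bord X$) and option (3) (a complete geodesic/tube) of Theorem \ref{th.cont_Laboubou} are excluded, leaving only the genuine $k$-disc spanned by a round circle, which by Theorem \ref{t.foliated_PP} is a leaf of $\cF_{k,h}$ inside $T^1X$. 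Once the geometric convergence is pinned down in the $C^\infty_\loc$ sense via Theorem \ref{th.cont.Plateau}, checking that the normalized area measures converge to a (multiple of the) Poincaré area on the limit leaf, and that the whole family of translates assembles into a $\PSL$-invariant measure, is a routine tightness-and-disintegration argument. A secondary technical point is that $|\chi(S_n)|$ may go to infinity along the sequence; but since the normalization is built into the definition of $\hat\mu_{k,h}(c_n)$ precisely so that these are probability measures, this causes no difficulty for the existence of accumulation points, only — as will matter later — for deciding \emph{which} laminar measure is obtained.
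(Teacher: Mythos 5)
The survey states this lemma without proof (the argument is deferred to \cite{ALS}), so I am comparing your proposal with the argument implicit in the framework of \S\ref{sss.fibered} and \S\ref{sss.conf_currents_PSL_inv_laminar_measures}. Your architecture is the right one and essentially the intended one: the quotient measures are probability measures on the compact space $\MKD_{k,h}(C)/\Pi$ (for a fixed $C\geq\sup_n C_n$), so accumulation points exist; $\PSL$-invariance (laminarity) is a weak-$\ast$ closed condition because it is a local product condition with a fixed fiber measure; and the support claim should come from the fact that a non-degenerate Hausdorff limit of $C_n$-quasicircles with $C_n\to 1$ is a round circle, combined with continuity of $\bord$ and the identification $\MKD_{k,h}(1)\simeq T^1X$ of Theorem \ref{t.foliated_PP}.

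The gap is in the one step that carries the geometric content, namely why no mass of the limit sits on degenerate configurations, and you justify it twice with reasons that do not work. First, ``the presence of a nontrivial surface group $\Gamma_n$ acting on $c_n$ prevents this collapse'' is false as stated: $\hat\mu_{k,h}(c_n)$ is a sum over the translates $\gamma\cdot c_n$, $\gamma\in\Pi/\Gamma_n$, and these translates certainly do Hausdorff-degenerate to singletons (any fixed Jordan curve does, under a sequence $\gamma_j$ leaving every compact set); $\Gamma_n$-equivariance prevents nothing. Second, you attribute the exclusion of cases (2) and (3) of Theorem \ref{th.cont_Laboubou} to the hypothesis $C_n\to 1$; but $C_n\to1$ only upgrades a non-degenerate limiting quasicircle to a round circle. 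What actually rules out points and tubes is the uniform quasiconformality bound together with recentering: a support point of the limit is, after recentering by $\Pi$, a $C^\infty_\loc$-limit in $\MKD_{k,h}(C)$ of marked discs whose basepoints lie in a fixed compact set; a degeneration to a tube would force the boundary curves to converge to a two-point set, which is impossible for uniformly quasiconformal circles, and a collapse of the boundary curves to a singleton either forces the discs to escape to infinity (contradicting the compact basepoints, since $\bord K_{c}=\overline{\Ext(c)}$) or produces a pseudo-horospheric $k$-surface, excluded by Corollary \ref{coro.plateau_no_sol}. This non-degeneration is precisely what underlies the cocompactness of the $\Pi$-action on $\MKD_{k,h}(C)$ that you already invoke for tightness; once you use it at this step as well, the rest of your argument closes as written: support points are genuine marked discs, $\bord$ is continuous, so their boundary curves are non-degenerate Hausdorff limits of $C_n$-quasicircles with $C_n\to1$, hence round circles, and laminarity passes to the limit.
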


Kahn-Markovi\'c's proof of Theorem \ref{th.KM} provides a sequence of surfaces with remarkable properties. In particular, the following theorem is relevant to us.

\begin{theorem}\label{t.equidistrib}
There exists a sequence $\Gamma_n$ of $C_n$-quasi-Fuchsian subgroups of $\Pi$ satisfying the following properties
\begin{enumerate}
\item $\lim_{n\to\infty}C_n=1$;
\item the sequence of invariant measures $\hat \mu_{k,h}(\bord\Gamma_n)$ converges to a laminar measure $\hat \mu_\infty$ on $T^1X$ with total support.
\end{enumerate}
\end{theorem}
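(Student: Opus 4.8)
The plan is to feed the Kahn-Markovi\'c construction into the Ratner rigidity machinery of \S\ref{sss.conf_currents_PSL_inv_laminar_measures}. Apply Theorem \ref{th.KM} with a sequence $\eps_n\to 0$: for each $n$ this produces a surface subgroup $\Gamma_n\leq\Pi$ whose limit set $c_n:=\bord\Gamma_n$ is a $(1+\eps_n)$-quasicircle, so setting $C_n=1+\eps_n$ gives property (1). Since $C_n\leq 2$ for $n$ large, after passing to the $\Pi$-quotient the laminar measures $\hat\mu_{k,h}(c_n)$ become probability measures on the fixed compact space $\MKD_{k,h}(2)/\Pi$; extracting a subsequence (which does not affect the statement), they converge to some $\hat\mu_\infty$, and by the Lemma preceding the theorem $\hat\mu_\infty$ is a laminar measure supported on $\MKD_{k,h}(1)\simeq T^1X$. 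It therefore only remains to prove that $\hat\mu_\infty$ has full support.

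For this I would pass to the associated conformal current. By the correspondence of \S\ref{sss.conf_currents_PSL_inv_laminar_measures} the factor of $\hat\mu_{k,h}(c_n)$ on $\QC^+(C_n)$ depends on neither $h$ nor $k$, so I may compute its limit using the hyperbolic metric $h_0$, where Theorem \ref{t.homogeneous_model} identifies the bi-invariant lift $\hat\nu_{k,h_0}(c_n)$ with a right-$\PSL$-invariant probability measure on $\PSLc/\Pi\simeq F_{h_0}\tilde X/\Pi$. The crucial point is that the Kahn-Markovi\'c surfaces equidistribute: the sequence $(\Gamma_n)$ can be chosen so that $\hat\nu_{k,h_0}(c_n)$ converges to the Haar measure of $\PSLc/\Pi$ --- this is precisely the equidistribution statement lying at the heart of \cite{CMN}. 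The factor of the limit is then proportional to $\Leb$, and by the dichotomy of Theorem \ref{thm.dichotomy_laminar} (itself a consequence of the Ratner classification, Theorem \ref{thm.Ratner}) the only laminar measure with this factor is the unique fully supported one $\mu$; hence $\hat\mu_\infty=\mu$ and property (2) holds.

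The main obstacle is exactly this equidistribution input, which is where the fine structure of the Kahn-Markovi\'c construction --- rather than the bare existence statement of Theorem \ref{th.KM} --- is needed: a priori the measures $\hat\mu_{k,h}(c_n)$ could accumulate on a fixed compact leaf, namely a closed Fuchsian $k$-surface, and this possibility must be excluded. The Kahn-Markovi\'c surface is glued from \emph{good pants} whose cuffs are orthogeodesic segments distributed with nearly uniform density in $T^1M$, and combining this even distribution with exponential mixing of the frame flow of $(M,h_0)$ forces the normalized area measure of the surface to converge weak-$\ast$ to the Liouville measure. The $k$-disc $D_{k,h_0}(c_n)$ stays at bounded distance from the corresponding pleated surface and becomes $C^\infty_\loc$-close to it as $C_n\to 1$, so the same equidistribution carries over to it. This homogeneous-dynamics step is the only ingredient essential beyond the classification results already quoted.
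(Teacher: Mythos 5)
Your skeleton is the same as the paper's: take a Kahn--Markovi\'c sequence with $\eps_n\to 0$, use the compactness of $\MKD_{k,h}(2)/\Pi$ and the Lemma preceding the theorem to get a laminar limit on $T^1X$, exploit the fact that the factor on $\QC^+$ depends on neither $h$ nor $k$ to work in the hyperbolic homogeneous model, and conclude via the Ratner dichotomy. The gap is at the one step that actually carries the content: full support. The paper (following \cite{ALS}) does \emph{not} prove that the measures converge to the Haar/Lebesgue current; it proves the weaker, qualitative statement that the $k$-surfaces representing the Kahn--Markovi\'c groups do not accumulate on closed (Fuchsian) surfaces, and then lets Theorem \ref{thm.dichotomy_laminar} force the limit to be the unique fully supported laminar measure. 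You instead assert the stronger equidistribution-to-Haar statement and attribute it to \cite{CMN}, justifying it by ``even distribution of the cuffs plus exponential mixing of the frame flow forces the area measure to converge to Liouville.'' That implication is not an argument: even distribution of the feet of the good pants says nothing, by itself, about where the \emph{area} of the glued surface concentrates, and ruling out concentration near a fixed closed totally geodesic (Fuchsian) leaf is precisely the nontrivial point --- in \cite{CMN} and in \cite{ALS} it is obtained from the Ratner/Shah classification \emph{together with} a specific non-concentration argument for the Kahn--Markovi\'c surfaces, not as a direct corollary of mixing. So the black box you invoke is essentially the statement to be proved.

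Two further points. First, the transfer you propose from the pleated Kahn--Markovi\'c surface to the $k$-disc (``bounded distance and $C^\infty_\loc$-close'') is both unnecessary and unsound: pleated surfaces are not smooth, and pointwise proximity of surfaces does not yield weak-$\ast$ convergence of the normalized global measures; your own earlier observation --- that the conformal current $\hat m(c_n)$ is independent of $h$ and $k$ --- already reduces everything to a statement about the limit sets $\bord\Gamma_n$, which is the level at which the exclusion of closed leaves must be carried out. Second, a technical slip: for finite $n$ the measure $\hat\nu_{k,h_0}(c_n)$ is invariant under the conjugated action $\alpha_k$ on $\FKD_{k,h_0}(C_n)$, not under the homogeneous right $\PSL$-action on $\PSLc/\Pi$; homogeneous invariance only appears for the limit measure (supported on $\FKD_{k,h_0}(1)$), which is why the argument must apply the dichotomy to the limit rather than claim invariance of the approximants.
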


The proof of this theorem may be found in \cite{ALS} and consists in showing that the $k$-surfaces representing Kahn-Markovi\'c's surfaces obtained on \cite{KM1} do not accumulate in closed surfaces. The result follows from the dichotomy for $\PSL$-invariant measures in $T^1M$ (Theorem \ref{thm.Ratner}).

\subsubsection{Rigidity of the hyperbolic marked area spectrum of $k$-surfaces} We will only prove Theorem \ref{th.rigidity_MAS}. Assume that we have an equality between the marked area spectra $\MAS_{k,h}=\MAS_{k,h_0}$. As we remarked in \S \ref{sss.statement_rigidity_MAS}, this implies that for every quasi-Fuchsian $k$-surface $S$ the sectional curvature equals $-1$ in $TS$. 

Define the function $\sigma$ associating to $(D,p)\in\text{MKD}_{k,h}(C)$ (for $C\geq 1$) the sectional curvature of $D$ in restriction to $T_pD$. When $C=1$ this function may be thought of as the function  $\sect_h:T^1X\to\R$ associating to $v\in T^1M$ the sectional curvature in $v^\perp$, i.e., in the tangent space of the leaf containing $v$.

Consider the Kahn-Markovi\'c's sequence given in Theorem \ref{t.equidistrib}. Then the function $\sigma=-1$, $\hat\mu_{k,h}(\bord\Gamma_n)$-everywhere. Hence, we have $\sigma =-1$, $\hat\mu_\infty$-almost everywhere. Since $\hat \mu_\infty$ has full support and $\sect_h$ is continuous, we obtain $\sect_h=-1$ in $T^1X$. This implies that the sectional curvature of $M$ equals $-1$ everywhere. By Mostow's rigidity theorem $h$ and $h_0$ are isometric.

In a recent work \cite{ALS2} we obtained a number of results that go beyond the rigidity of the marked area spectrum, looking at the \emph{marked energy spectrum} i.e. the map 
\begin{equation}\label{eq.energy_spectrum}\text{MES}_{k,h}:\text{QF}\to\R^+,\,\,\,\,\,\,\,\,\,[\Gamma]\mapsto\int_{S_{k,h}([\Gamma])}Hd\Area_h,
\end{equation}
where $H$ is the mean curvature of the surface. The integral of the mean curvature over the surface, called the \emph{energy} of the surface, is nothing but the area of the Gauss lift of $S$ in $T^1M$ calculated with the Sasaki metric. We obtain two result
\begin{enumerate}
\item the rigidity of the hyperbolic marked energy spectrum for negatively curved metric subject to a \emph{lower bound} $-1\leq\sect_h$;
\item the fact that the marked area and energy spectra are asymptotic (see \cite{ALS2} for more details) if and only if the sectional curvature is constant.
\end{enumerate}
These results can be compared with the classical measure rigidity theorems by Katok and Ledrappier \cite{Katok_entropy_closed,Katok4,Ledrappier_har_BM}.
 It would be tempting to think that they fit into a thermodynamical theory of $k$-surfaces, analogous to that of the geodesic flow.

\section{Some questions}
The analogy between the dynamics of $k$-surfaces and of the geodesic flow is a source of numerous questions that seem interesting.  We finish this paper by asking some of them.

\subsection{Rigidity of the $k$-surface foliation}
The foliation $\cF_{k,h}$ constructed in \S \ref{sss.FPP} is only continuous with smooth leaves. In the spirit of \cite{BFL} it is natural to ask
\begin{question}
Assume that the foliation $\cF_{k,h}$ is smooth. Is it true that $X$ has constant sectional curvature?
\end{question}

In another direction it is natural to compare the equivalence between $k$-surfaces foliations, with orbit equivalence for the geodesic flow given by Gromov's geodesic rigidity theorem.

\begin{question}
Assume that the homeomorphisms $\Phi_{k,h}$ and $\Psi_{k,h}$ described in \S \ref{sss.rigidity_FPP} coincide. Is it true that $X$ has constant sectional curvature?
\end{question}

\subsection{Boundary rigidity problem} The boundary rigidity problem asks whether the Riemannian metric on a \emph{simple manifold} can be determined by the restriction of the Riemannian distance between points at the boundary. Recall that a simple manifold is a Riemannian ball with strictly convex boundary without conjugate points such that any pair of points in the boundary can be connected by a unique minimizing geodesic. This applies in particular to any negatively curved metric with strictly convex boundary. See for example \cite{CGL,Wen}.

The following question is a $2$-dimensional analogue of this boundary question. We also refer to \cite{ABN} for a related question for area minimizing surfaces.

We consider a ball $B$ in hyperbolic $3$-space $\Hyp$. We consider the set $\cC^+(\partial B)$ of oriented round circles of $\partial B$. Fax a negatively curved metric $h$ on $B$ with $\sect_h\leq -1$ and a number $k\in(0,1)$. For any $c\in \cC^+(\partial B)$ there exists a unique $k$-disc $D_c\in B$ such that $\partial D_c=c$. we can consider the marked boundary data
$$\MAS_{h,k,\partial B}:\cC^+(\partial B)\to\R^+,\,\,\,\,\,\,\,\,\,c\mapsto\Area_h(D_c).$$

\begin{question}
Assume that $h$ and $h_0$ have the same marked boundary datas. Are $h$ and $h_0$ isometric?
\end{question}

Even a local rigidity in the spirit of \cite{CGL} would be interesting. The difficulty is the lack of tools from hyperbolic dynamics.

\subsection{Equidistribution of closed surfaces, thermodynamical formalism and cohomological equations} The theory of geodesic currents in negative curvature is well understood. Basically they are in bijective correspondence with cohomology classes of Hölder continuous potentials in $T^1M$. We refer to Ledrappier's paper \cite{Ledrappier_bord} for an exposition of this theory.

By contrast, the measures that arise as limits of quasi-Fuchsian surfaces are not well understood. A natural question is if there is an analogue of Bowen-Margulis' measure in our context. We refer to \cite{KMS} for a systematic study for the case of minimal surfaces. Let $\cS_{k,h,\eps}$ be the family of $(1+\eps)$-quasi-Fuchsian $k$-surfaces. We let $\delta(S)$ denote the measure in $T^1M$ supported by the Gauss-lift of $S$ induced by its hyperbolic area.

\begin{question}
Consider the following probability measures
$$\frac{1}{\#\{S\in\cS_{k,h,\eps};\Area_h(S)\leq R\}}\sum_{S\in\cS_{k,h,\eps};\Area_h(S)\leq R} \delta(S).$$
What can be said about the accumulation points of these measures as $R\to\infty$ and $\eps\to 0$?
\end{question}

As we mentionned before, it would be interesting to read the previous questions through the eye of thermodynamical formalism theory. A concrete starting point for establishing a theormodynamical formalism for $k$-surfaces, would be to get an analogous of Liv\v{s}ic's and Guillemin-Kazhdan's theorems.

\begin{question}
What can be said about a Hölder continuous function $F:T^1M\to\R$  that integrates $0$ on the Gauss lifts of all quasi-Fuchsian $k$-surfaces?

What can be said about a closed $2$-form that integrates $0$ on all quasi-Fuchsian $k$-surfaces?
\end{question} 

A subtle and nontrivial point in the question above is that of the regularity of objects that we consider. We might want to apply cohomological result (such as Liv\v{s}ic's) to Hölder continuous objects like in the proof of Theorem \ref{th.cohomology}.

\subsection{Beyond closed quasi-Fuchsian $k$-surfaces}

Actually, in \cite{LabourieAnnals}, Labourie uses  ideas from thermodynamical formalism and coding builds many closed $k$-surfaces. These $k$-surfaces are not quasi-Fuchsian, might not be incompressible and equidistribute in the huge phase space described in \S \ref{sss.Lab_lam}. In particular there are closed $k$-surfaces that limit to closed geodesics.

\begin{question}
Are incompressible $k$-surfaces in $M$ abundant in the space of all closed $k$-surfaces? Is it possible to define a more general asymptotic counting functional that takes into account these non incompressible $k$-surfaces? Would that make sense to talk about rigidity?
\end{question}

\begin{flushleft}
{\scshape S\'ebastien Alvarez}\\
	CMAT, Facultad de Ciencias, Universidad de la Rep\'ublica\\
	Igua 4225 esq. Mataojo. Montevideo, Uruguay.\\
	email: \texttt{salvarez@cmat.edu.uy}

\end{flushleft}

\end{document}